\documentclass[12pt]{amsart}
\usepackage{amsmath,amssymb,amsthm,mathrsfs}
\usepackage[a4paper,margin=29mm]{geometry}
\usepackage{microtype,booktabs,tikz}
\usepackage{comment}
\usepackage{enumerate}
\usepackage[colorlinks=true,linkcolor=blue,citecolor=blue,urlcolor=blue]{hyperref}
\hypersetup{pdftitle={Fourier Restriction Beyond Dimension Data: A Multiscale Moment Criterion},pdfauthor={Jinjun Li}}
\allowdisplaybreaks[2]
\numberwithin{equation}{section}
\newtheorem{theorem}{Theorem}[section]
\newtheorem{proposition}[theorem]{Proposition}
\newtheorem{lemma}[theorem]{Lemma}
\newtheorem*{gridlemma}{Lemma}
\newtheorem{corollary}[theorem]{Corollary}
\newtheorem*{maintheoremA}{Main Theorem A}
\newtheorem*{maintheoremB}{Main Theorem B}
\newtheorem*{maintheoremC}{Main Theorem C}
\theoremstyle{definition}

\theoremstyle{remark}

\newtheorem{remark}{Remark}[section]
\newtheorem{example}{Example}[section]
\newcommand{\R}{\mathbb R}

\newcommand{\Z}{\mathbb Z}
\newcommand{\F}{\mathbb F}
\newcommand{\E}{\mathbb E}
\newcommand{\Prob}{\mathbb P}
\newcommand{\dd}{\,\mathrm d}
\newcommand{\one}{\mathbf 1}
\newcommand{\dimF}{\dim_{\mathrm F}}
\newcommand{\dimH}{\dim_{\mathrm H}}
\newcommand{\dimtwo}{\dim_2}
\newcommand{\dimS}{\dim_{\mathrm S}}
\newcommand{\supp}{\operatorname{supp}}
\newcommand{\dist}{\operatorname{dist}}
\newcommand{\Corr}{\mathcal C}
\newcommand{\unif}{\lambda}
\newcommand{\maskerr}{\varepsilon}

\title[Fourier Restriction Beyond Dimension Data]
{Fourier Restriction Beyond Dimension Data: A Multiscale Moment Criterion}
\author{Jinjun Li}

\address[Jinjun Li]{School of Mathematics and Information Science, Guangzhou University, Guangzhou, 510006, P.~R.~China}
\email{lijinjun@gzhu.edu.cn}

\date{}
\subjclass[2020]{Primary 42B10, 28A80; Secondary 28A75, 42B20}
\keywords{Fourier restriction, Fourier decay,  Fourier dimension, Fourier spectrum, $L^q$-dimension,  spectral measure}

\begin{document}
\begin{abstract}
We establish an exact Fourier restriction criterion for a class of weighted
Moran measures. Assuming uniformly bounded one-level frame bounds, we prove that, for every \(s>2\), the measure itself already detects the full extension estimate: \(L^2(\mu)\to L^s(\mathbb R)\) extension holds if and only if \(\widehat\mu\in L^s(\mathbb R)\), equivalently, if and only if the one-level Fourier moments \(G_{s,n}\) are summable,
where $G_{s,n}$ is the $s$th moment of the nonzero Fourier coefficients of
the $n$th digit law. The sufficient direction is obtained from a
finite-frame stability estimate and its multiscale iteration, while
necessity follows from disjoint resonant frequency grids. This gives, in
particular, an exact formula for the critical restriction exponent and a
precise criterion for endpoint attainment.

The criterion reveals that the optimal restriction range is not determined
by standard Fourier or dimensional data. We construct singular continuous
spectral Moran measures with the same scales, digit cardinalities, exponential
spectrum, and maximal nonzero Fourier coefficients. They also satisfy the same
global Fourier decay bound, which is sharp along the same sequence of resonant
frequencies, yet their critical restriction exponents are different. More
generally, after fixing the summability rate of the maximal Fourier
coefficients, we determine the optimal interval of possible restriction
exponents and show that, at every interior critical exponent, the same fixed
data allow both attainment and failure of the critical endpoint. These measures may simultaneously have
$
\dim_{\mathrm F}^{\theta}\mu=\theta$ and $
D_q(\mu)=1,
$
for all admissible $\theta$ and $q$, and pointwise local dimension one
everywhere on their supports. Thus the sharp restriction range is governed
by the multiscale distribution of Fourier mass rather than by its largest nonzero mask Fourier coefficients, Fourier spectrum, or $L^q$-dimensions alone.

Finite-field spectral constructions provide explicit deterministic
realizations of the criterion, including every critical restriction exponent
in $[1,2]$ and sharp logarithmic endpoint transitions. They also yield
singular continuous spectral measures of Fourier dimension zero for which
restriction holds for every $1\le p<2$, with sharp stretched-logarithmic
Fourier decay. Finally, explicit all-scale $L^q$-dimension formulas show
that refined dimensional information can strictly improve the classical
Stein--Tomas range while still failing to determine the sharp restriction
exponent.
\end{abstract}
\maketitle

\setcounter{tocdepth}{2}
\tableofcontents

\section{Introduction and main results}

For a Borel probability measure $\mu$ on $\R$, Fourier restriction asks for
the exponents $p\in[1,2]$ such that
$
\|\widehat f\|_{L^2(\mu)}\lesssim_p\|f\|_{L^p(\R)}$ for any $f\in\mathcal S(\R)$. For singular and fractal measures, restriction estimates reflect a delicate interaction between Fourier decay and geometric structure. Mockenhaupt and Mitsis showed that nontrivial restriction estimates follow from suitable Frostman bounds together with polynomial Fourier decay \cite{Mockenhaupt,Mitsis}, with the endpoint established by Bak--Seeger \cite{BakSeeger}. Sharpness results of Hambrook--\L{}aba and Chen show that, in general, these two power exponents alone cannot determine a better restriction range \cite{HambrookLaba,ChenSharp}. This makes it natural to ask what finer structural information can govern sharp restriction. Indeed, additional convolution or arithmetic structure is known to improve restriction estimates \cite{ChenSeeger,LabaWang}.

Recent work has refined the dimensional information available to the
restriction problem. Carnovale, Fraser and  Orellana obtain stronger
sufficient estimates from the Fourier spectrum and from $L^q$-dimensions
\cite{CFdORestriction,CFdOLq}; Polasek and Rela \cite{PR} show that zero Fourier dimension
can still coexist with nontrivial restriction under suitable subpower decay, while Fraser's quantitative flatness gives complementary
obstructions \cite{FraserFlatness}. This leads to the following question:
\begin{quote}
	\emph{Is the optimal Fourier restriction range determined by the full
		Fourier spectrum together with all the $L^q$-dimensions, and if not, what
		finer multiscale quantity does?}
\end{quote}
This is the main motivation of the present paper.

We answer this question for weighted Moran measures. Their product structure
separates the largest Fourier deviation at one level from the distribution
of the remaining Fourier mass. We show that the higher moments of the one-level Fourier coefficients give an exact restriction criterion. We then vary the multiscale distribution of Fourier mass while keeping the underlying geometric data, the maximal Fourier coefficients, and other natural Fourier invariants fixed.

Let $R_n\ge2$ be integers, put $Q_0=1$ and
$Q_n=R_1\cdots R_n$, and let $(w_{n,j})_{0\le j<R_n}$ be probability
vectors. We consider
\begin{equation}\label{eq:product-measure}
	\mu=\mathop{*}_{n=1}^{\infty}
	\left(\sum_{j=0}^{R_n-1}w_{n,j}\delta_{j/Q_n}\right).
\end{equation}
We refer to measures of the form \eqref{eq:product-measure} as
\emph{weighted Moran measures}. Unless otherwise specified, $\mu$ below
denotes such a measure. The associated Fourier masks are
\[
m_n(t)=\sum_{j=0}^{R_n-1}w_{n,j}e^{-2\pi ijt},
\qquad
D_{R_n}(t)=\frac1{R_n}\sum_{j=0}^{R_n-1}e^{-2\pi ijt},
\]
and we write
\[
\maskerr_n=\|m_n-D_{R_n}\|_\infty,
\quad
C_n=R_n\max_j w_{n,j},\quad 
G_{s,n}
=
\sum_{k=1}^{R_n-1}
\left|m_n\!\left(\frac{k}{R_n}\right)\right|^s ~~(s>2).
\]
The quantity $G_{s,n}$ records the distribution of the nonzero Fourier mass
of the $n$th digit law, rather than only its largest coefficient.

We define the \emph{critical restriction exponent} of $\mu$ by
\begin{equation}\label{CRE}
	p_{\rm res}(\mu):=
	\sup\left\{p\in[1,2]:
	\|\widehat f\|_{L^2(\mu)}
	\lesssim_{p,\mu}\|f\|_{L^p(\R)}
	\text{ for all }f\in\mathcal S(\R)\right\}.
\end{equation}
Thus $p_{\rm res}(\mu)$ is the upper endpoint of the admissible restriction
range; the estimate need not hold at that endpoint. For $1<p\le2$, duality
identifies restriction at $p$ with
$L^2(\mu)\to L^{p'}(\R)$ extension, where $p'=p/(p-1)$.

Our first result gives an exact criterion.

\begin{maintheoremA}
	[Theorem~\ref{thm:exact-moment-criterion} and
	Corollary~\ref{cor:critical-index}]
	Assume $\sup_n C_n<\infty$. For every $2<s<\infty$, the following are
	equivalent:
\[
(i)~\|\widehat{g\mu}\|_{L^s(\R)}
	\lesssim_{s,\mu}\|g\|_{L^2(\mu)}
	~~(g\in L^2(\mu)),\quad
 (ii)~\widehat\mu\in L^s(\R), \quad
	(iii)~\sum_nG_{s,n}<\infty.
\]
If $
	s_c(\mu):=
	\inf\Bigl\{s>2:\sum_nG_{s,n}<\infty\Bigr\},
$
	where $\inf\varnothing=\infty$, then
	\[
	p_{\rm res}(\mu)=\frac{s_c(\mu)}{s_c(\mu)-1},
	\]
	with the convention that the right-hand side equals $1$ when $s_c(\mu)=\infty$.
	When $2<s_c(\mu)<\infty$, restriction holds at the critical exponent if
	and only if
	$
	\sum_nG_{s_c(\mu),n}<\infty.
	$
\end{maintheoremA}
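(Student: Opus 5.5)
The plan is to prove the chain (i)$\Rightarrow$(ii)$\Rightarrow$(iii)$\Rightarrow$(i), and then read off the corollary about $p_{\rm res}$ from duality.

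\emph{(i)$\Rightarrow$(ii).} This step is immediate: take $g\equiv1$.

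\emph{(ii)$\Rightarrow$(iii).} Here I use the product formula
\[
\widehat\mu(\xi)=\prod_{n\ge1}m_n(\xi/Q_n).
\]
Fix $n$ and consider the resonant grid
\[
\Lambda_n=\{Q_{n-1}(k+R_n\ell):1\le k<R_n,\ \ell\in\Z\}.
\]
At $\xi\in\Lambda_n$ we have the following.
\begin{itemize}
\item For $j<n$, $\xi/Q_j$ is an integer, so $m_j(\xi/Q_j)=1$.
\item At level $n$, $m_n(\xi/Q_n)=m_n(k/R_n)$.
\item For levels $j>n$, the argument is small, provided $|\ell|$ is restricted to $|\ell|\lesssim 1$ and $k$ ranges over $1\le k<R_n$.
\end{itemize}
More precisely, I restrict to the ``low'' part $\xi=Q_{n-1}k$ with $1\le k<R_n$. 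Then $\xi/Q_{n+1}\le 1/R_{n+1}$, and the tail $\prod_{j>n}m_j(\xi/Q_j)$ is $\widehat{\mu_{>n}}$ evaluated at a point of modulus $<Q_n/Q_n=1$ in the rescaled variable. I need this tail bounded below in modulus on a neighbourhood. The bound $\sup C_n<\infty$ should give this: the tail measure is a probability measure on an interval of length $\le 1/Q_n$, and $\xi$ lies at distance $\lesssim Q_{n-1}$. This is the \textbf{main obstacle}, since $|\widehat{\mu_{>n}}(\xi)|$ could be small when $\xi\sim Q_n$.

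To handle it, I would average over small neighbourhoods $|\xi-Q_{n-1}k|\le cQ_{n-1}$ and use the frame-type inequality. Concretely, integrate $|\widehat\mu|^s$ over $[0,Q_n]$ and compare with $Q_{n-1}\sum_k|m_n(k/R_n)|^s$ times the average of $|\widehat{\mu_{>n}}|^s\cdot|\widehat{\mu_{<n}}|^s$. The averages $Q_{n-1}^{-1}\int|\widehat{\mu_{<n}}|^2$ over a period equal $\sum_j w^2\ge$ roughly $1/Q_{n-1}$, and that loss is what the normalisation in $G_{s,n}$ is designed to absorb. Alternatively, I would use disjoint windows $[Q_{n-1}k,Q_{n-1}k+\delta]$ of fixed size $\delta$, near which all factors $m_j$ with $j\ne n$ stay $\ge1/2$ in modulus when the argument is within $\delta/Q_j$ of an integer, which holds uniformly for $j<n$. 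The factors with $j>n$ are again at small argument $\le Q_{n-1}R_n/Q_{n+1}$ only if $k$ is small. So I would sum over $k$ with a second-level grid that also pins the $(n+1)$th coordinate to $0$ modulo $R_{n+1}$, using $\xi=Q_{n-1}k+Q_nR_{n+1}\ell$ and averaging over $\ell$.

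Bounded $C_n$ keeps the tails comparable via Parseval on the finite groups, so the resulting lower bound is $\sum_k|m_n(k/R_n)|^s\lesssim\int_{\Lambda_n\text{-windows}}|\widehat\mu|^s$. The windows are disjoint across $n$, so summing gives $\sum_nG_{s,n}\lesssim\|\widehat\mu\|_s^s$.

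\emph{(iii)$\Rightarrow$(i).} I work by multiscale induction. On $\Z_{R_n}$, the one-level operator $g\mapsto\bigl(\sum_j w_{n,j}g_je^{-2\pi ijk/R_n}\bigr)_k$ obeys a finite-frame stability bound, since $C_n$ bounded gives Plancherel within constants. The $k=0$ term is controlled exactly, while the nonzero frequencies contribute at most $G_{s,n}^{1/s}\|g\|_{L^2}$ through the Hölder/Hausdorff--Young interpolation between $\ell^2$ and the $\ell^s$-moment. Iterating over levels and treating $\widehat{g\mu}$ restricted to dyadic-type frequency blocks $|\xi|\sim Q_n$ as a tensor of such one-level pieces, I expect
\[
\|\widehat{g\mu}\|_s^s\lesssim\prod_n(1+CG_{s,n})\,\|g\|_{L^2(\mu)}^s,
\]
which is finite by (iii).

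\emph{Corollary.} By duality, restriction at $p$ is equivalent to (i) at $s=p'$. The condition (iii) is monotone in $s$ because $|m_n|\le1$. Hence restriction holds exactly for $p'>s_c$ (and at $p'=s_c$ iff $\sum_nG_{s_c,n}<\infty$), which gives $p_{\rm res}=s_c/(s_c-1)$ and the stated endpoint criterion.
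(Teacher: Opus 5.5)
Your step (i)$\Rightarrow$(ii) and the deduction of $p_{\rm res}$ from the monotonicity of $G_{s,n}$ in $s$ plus duality are correct and match the paper. Both substantive implications, however, have a genuine gap.

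\emph{Necessity (ii)$\Rightarrow$(iii).} You have the right frequencies $kQ_{n-1}$ and the exact factorization $\widehat\mu(kQ_{n-1})=m_n(k/R_n)\,\widehat\nu_n(k/R_n)$. The point you yourself call the main obstacle is never resolved, though. The averaging and ``second-level grid'' ideas are not carried out, and the claim that bounded $C_n$ ``keeps the tails comparable via Parseval'' simply asserts what has to be proved. Two ideas are missing.

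First, conjugate symmetry $|m_n((R_n-k)/R_n)|=|m_n(k/R_n)|$ lets you keep only $1\le k\le R_n/2$. The tail is then needed only on $|u|\le 1/2$, and the sampled frequencies lie in $[Q_{n-1},Q_n/2]$, so they are disjoint in $n$.

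Second, you need $\inf_n\inf_{|u|\le 1/2}|\widehat\nu_n(u)|\ge c(C_0)>0$. The generic bound $|\widehat\sigma(u)|\ge\cos(\pi u)$ for probability measures on $[0,1]$ degenerates at $u=\pm1/2$; take $\sigma=\frac12(\delta_0+\delta_1)$. This is exactly where $\sup_nC_n<\infty$ enters. The law $\rho$ of $(J_{n+1}+U)/R_{n+1}$, with $U$ uniform, has density at most $C_0$. Hence at least half its mass lies in $[a,1-a]$, where $a=(4C_0)^{-1}$. The centered-cosine argument then gives $|\widehat\rho(u)|\ge\frac12\sin(\pi a)$, and so $|m_{n+1}(u/R_{n+1})|\ge\frac12\sin(\pi a)$. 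The remaining factor $\widehat\nu_{n+1}(u/R_{n+1})$ is evaluated at modulus at most $1/4$, where the generic bound gives $1/\sqrt2$ (Lemma~\ref{lem:bounded-frame-tail}).

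Your fixed-length windows should also be replaced by the global sampling inequality $\sum_{j\in\Z}|\widehat\mu(j)|^s\lesssim\|\widehat\mu\|_s^s$. This follows from $\widehat\mu=\widehat\mu*\widehat\chi$, with $\chi\in C_c^\infty$ equal to one on $[0,1]$. A pointwise continuity argument on windows of fixed length does not control coefficients $|m_n(k/R_n)|$ smaller than the variation of the level-$n$ factor across the window.

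\emph{Sufficiency (iii)$\Rightarrow$(i).} To obtain $\prod_n(1+CG_{s,n})$ you need, at each level, a bound $\sum_{k}|\langle h,\phi_k\rangle|^s\le(1+O(G_{s,n}))\|h\|_{L^2(w_n)}^s$ for every $h$. Here $\phi_k(b)=e^{2\pi i b(k+u)/R_n}$, and the bound must hold uniformly in the continuous shift $u$. The induction evaluates the last level at $k+\xi/Q_{N-1}$, so the unshifted operator on $\Z_{R_n}$ is not enough. Your sketch does not produce such a bound, for three reasons:
\begin{itemize}
\item For general $h$ there is no distinguished ``$k=0$ term''; for $h=\phi_j$ the unit coefficient sits at $k=j$.
\item H\"older, or interpolation between the frame bound and the trivial sup bound, only gives the constant $C_n$ per level, whose product diverges whenever $C_n$ stays away from $1$.
\item Splitting off a main coefficient and bounding the rest by $G_{s,n}^{1/s}\|h\|$ in $\ell^s$ gives, via Minkowski, the per-level constant $(1+G_{s,n}^{1/s})^s$. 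Its product converges only if $\sum_nG_{s,n}^{1/s}<\infty$, which is strictly stronger than (iii).
\end{itemize}
The missing ingredient is the finite-frame stability lemma (Lemma~\ref{lem:q41-frame}). Since $\eta_n^s\le G_{s,n}\to0$, the coherence $\max_{k\ne l}|\langle\phi_k,\phi_l\rangle|=\eta_n$ is eventually small. Together with the frame bound $C_0$, this forces any $h$ with $\sum_k|\langle h,\phi_k\rangle|^s>\|h\|^s$ to be nearly aligned with a single $\phi_j$. Write $h=z\phi_j+v$ with $\|v\|^2=\delta$. The error $2^{s-1}C_0\delta^{s/2}$ coming from $v$ is absorbed by the loss $1-(1-\delta)^{s/2}\ge\delta$, which leaves exactly $1+2^{s-1}G_{s,n}$.

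You would still need the iteration of Theorem~\ref{thm:frame-criterion}:
\begin{itemize}
\item Minkowski in $L^{s/2}$ over the last digit;
\item the tail bound $\sup_u\sum_j|\widehat\nu_N(u+j)|^s\le\prod_{n>N}K_n$, via uniform-tail approximants and Fatou;
\item $K_n=C_n$ on the finitely many early levels;
\item a density argument from cylinder functions.
\end{itemize}
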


The striking point is that the single test $g\equiv1$ detects the entire
restriction range in this class. The moments $G_{s,n}$ record how Fourier
mass is distributed among the nonzero coefficients at one scale, while
$\sum_nG_{s,n}$ records how these contributions accumulate across scales.
Thus the sharp restriction range is governed by genuinely multiscale
information that is invisible to pointwise Fourier decay and dimensional
data alone.

The sufficient direction comes from a finite-frame stability estimate whose
error is controlled by an off-diagonal $s$th row moment, followed by a
scale-by-scale iteration. Necessity is obtained from disjoint resonant
frequency grids. One-level iteration has precedents in the almost-Parseval
frame towers of Lai--Wang \cite{LaiWang}; here it yields a
continuous-frequency $L^s$ extension estimate together with a matching
scalar necessity test. The construction of \L{}aba--Wang \cite{LabaWang},
by contrast, combines finite $\Lambda(p)$ sets with decoupling to obtain
near-optimal restriction ranges. Our criterion determines the exact range
for a prescribed sequence of digit laws, including the critical endpoint.

For explicit arithmetic realizations we use the finite-field digit sets of
Lai, Shi and Xie \cite{LSX}. They provide computable one-level moments and,
through a finite Hadamard structure, spectral examples. This arithmetic
construction is not part of the mechanism behind our main results: neither
the exact moment criterion nor the fixed-data non-determinacy theorem depends
on it. Its role is to show that the same restriction phenomena persist within
a particularly rigid spectral class.

For a tridiagonal finite-field model, the moment criterion reduces to an explicit scalar series for arbitrary scale sequences. The leading logarithmic growth of the digit parameters determines the critical restriction exponent, while a lower-order logarithmic correction determines whether the critical endpoint is attained. Consequently, every critical restriction exponent in $[1,2]$ occurs, and for each $1<p_0<2$ both exact ranges $[1,p_0]$ and $[1,p_0)$ can be realized. More importantly, this variation persists even when the standard dimensional data are fixed: $L^q$-dimensional information may improve the classical Stein--Tomas--Mockenhaupt range, but it still does not determine the sharp restriction exponent.

\begin{maintheoremB}
	[Theorem~\ref{thm:complete-nondeterminacy}]
	For every $p_0\in[1,2]$ there exists a singular continuous spectral Moran
	measure $\mu_{p_0}$ such that
	\[
	D_q(\mu_{p_0})=1\quad(1<q\le\infty),
	\qquad
	\dim_{\mathrm F}^{\theta}\mu_{p_0}=\theta
	\quad(0\le\theta\le1),
	\]
	and, moreover,
	\[
	\lim_{r\downarrow0}
	\frac{\log\mu_{p_0}(B(x,r))}{\log r}=1
	\qquad\text{for every }x\in\supp\mu_{p_0}.
	\]
	Nevertheless,
	$
	p_{\rm res}(\mu_{p_0})=p_0.
	$
	For each $1<p_0<2$, the measure can be chosen with either exact
	restriction range $[1,p_0]$ or $[1,p_0)$.
\end{maintheoremB}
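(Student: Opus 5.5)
Main Theorem B follows from Main Theorem A together with an explicit finite-field construction, so the plan is to build the measures directly. Use the Lai--Shi--Xie finite-field digit sets: at level $n$ take $R_n=r_n^2$ (or $r_n^k$) for primes $r_n$. Take the digit law as uniform weights on a digit set $\mathcal D_n$ with $\#\mathcal D_n=N_n$, chosen as a tridiagonal/Hadamard-type set in $\mathbb F_{r_n}^2$ embedded in $\{0,\dots,R_n-1\}$. The finite Hadamard structure makes each level a spectral pair. With $\sup_n C_n<\infty$ enforced (e.g.\ $N_n\asymp R_n$ up to a bounded factor, or $C_n=R_n/N_n$ bounded), the infinite convolution is spectral under the standard Lai--Wang/An--He type criterion. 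It is singular continuous because $\prod_n N_n/R_n$ diverges to $0$ in the appropriate sense while $N_n\to\infty$.

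\emph{Moment computation.} For such sets, $|m_n(k/R_n)|$ takes the value $1/\sqrt{N_n}$ or a similar explicit value, or $0$, on explicit subsets of nonzero $k$. This gives
\[
G_{s,n}\asymp A_n\,N_n^{-s/2}
\]
with $A_n$ a count of resonant frequencies. Choosing $N_n$ to grow like $\exp(n^{\alpha})$ or with $\log N_n\sim c\log n$ plus a $\log\log$ correction makes $\sum_n G_{s,n}$ converge exactly for $s>s_0$, where $s_0=p_0'$. By Main Theorem A this gives $p_{\rm res}=p_0$. The lower-order correction selects convergence or divergence at $s=s_0$, and hence the ranges $[1,p_0]$ or $[1,p_0)$. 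The endpoint $p_0=1$ uses $\sum_n G_{s,n}=\infty$ for all $s$ (a bounded $N_n$ subsequence). The endpoint $p_0=2$ uses rapidly growing $N_n$ so that every series converges.

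\emph{Dimensional data.} Next I verify $D_q=1$ and local dimension one everywhere. Since $C_n$ is bounded, $\mu(I)\le \prod_{k\le n}C_k\,Q_n^{-1}$ for each level-$n$ interval. The key is to arrange $\log\prod_{k\le n}C_k=o(\log Q_n)$, e.g.\ $C_n\to1$ slowly or $R_n\to\infty$ fast relative to $\log C_n$. Then $\mu(B(x,r))=r^{1+o(1)}$ uniformly, which gives all $L^q$-dimensions $1$ and pointwise local dimension $1$; intermediate scales $r$ between $Q_{n+1}^{-1}$ and $Q_n^{-1}$ are handled by counting level-$(n+1)$ digits in an interval. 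For the Fourier spectrum, the resonances give $|\widehat\mu(\xi)|\ge c$ along $\xi=Q_n\cdot$(resonant $k$). This forces $\dim_F\mu=0$. The bound $|\widehat\mu|\le1$ then gives $\dim_F^\theta\mu=\theta$ through the Fraser Fourier-spectrum formula, once the energy $\int|\widehat\mu|^2|\xi|^{\theta-1}$ is compared with the $\theta=1$ value (Sobolev dimension $1$, which follows from $D_2=1$). I would invoke the interpolation and continuity properties of $\dim_F^\theta$, with the endpoints $\dim_F^0=0$ and $\dim_F^1=\dim_2=1$ and concavity, to squeeze it to $\theta$. This also needs $\dim_2\mu=1$, which comes from $D_2=1$ above.

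\emph{Main obstacle.} The hard part is compatibility. The digit densities must keep $\log\prod C_k=o(\log Q_n)$ so that dimensions stay $1$, while the resonant counts $A_n N_n^{-s/2}$ must still be tuned to make the series have an arbitrary critical exponent together with the log-correction. My first attempt is to decouple the two: let $R_n$ grow super-exponentially with $C_n$ bounded, which makes the dimensions insensitive, and put all the tuning in $N_n$ relative to the number of resonant frequencies, so that $G_{s,n}\approx n^{-\beta s/2+\gamma}(\log n)^{\delta}$.
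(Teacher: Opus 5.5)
Your overall strategy is the paper's: apply Main Theorem~A to finite-field Hadamard Moran measures with bounded $C_n$, and use a logarithmic growth parameter with a $\log\log$ correction for the endpoint. The execution has two genuine gaps, described below.

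\textbf{The one-level moment estimate is assumed, and your setup does not support it.} The whole theorem rests on a two-sided bound $G_{s,n}\asymp_{P,s}R_n^{1-s/2}$ with constants uniform in $n$. You assert it (``$|m_n(k/R_n)|$ takes the value $1/\sqrt{N_n}$ or a similar explicit value'') rather than prove it.
\begin{itemize}
\item With Lai--Shi--Xie digits from $\F_{r_n}$ in base $R_n=r_n^2$, one has $N_n=r_n$. Then $C_n=r_n\to\infty$ and the Frostman exponent is $1/2$, contradicting both the hypothesis of Main Theorem~A and $D_q=1$.
\item Within these finite-field Hadamard sets, bounded $C_n$ forces a bounded, effectively fixed prime $P$, with $R_n=P^{q_n}$, $N_n=P^{q_n-1}$ (say), and the exponent $q_n$ as the only tuning parameter.
\item In that regime the masks are not Gauss sums: the factors $e^{-2\pi ikx_jP^j/R_n}$ are not characters of $\F_P$. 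The Weil bound \eqref{eq:weil-bound} loses $(2\log 2P)^{q_n}$, which is a fixed positive power of $R_n$. It only brackets $G_{s,n}$ between $R_n^{1-s/2}$ and $\rho_P^{(s-2)q_n}$, which determines neither $s_c$ nor the endpoint. This is exactly the situation of Theorem~\ref{thm:scale-growth-transition}(ii).
\end{itemize}
The missing idea is the tridiagonal basis of Lemma~\ref{lem:q41-bases}. It makes the top digit a path quadratic form, so the mask sum becomes $\mathbf 1^{T}A_0T_0A_1T_1\cdots T_{r-2}A_{r-1}\mathbf 1$ with unitary diagonal $A_j$ and $\|T_j\|=\sqrt P$. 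This gives $\max_{k\ne0}|m_n(k/R_n)|\le PR_n^{-1/2}$, and Parseval ($G_{2,n}=P-1$) plus H\"older give the comparability (Proposition~\ref{prop:q41-path}). Some flatness estimate of this strength, losing at most polylogarithmic factors in $R_n$, is indispensable. Parseval also ties your resonance count to $N_n$ via $A_n/N_n\asymp G_{2,n}\le C_0-1$, so there is no second parameter to ``decouple''.

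\textbf{The dimension part contains an error and misses a growth constraint.} Resonances give $|\widehat\mu(kQ_{n-1})|\ge c\,|m_n(k/R_n)|\approx R_n^{-1/2}$, not $\ge c$. A bound $|\widehat\mu(\xi_n)|\ge c$ along $\xi_n\to\infty$, together with $|\widehat\mu'|\le2\pi$, would give $\widehat\mu\notin L^s$ for every $s$, hence $p_{\rm res}=1$.

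Three of your dimension claims need $\log R_n=o(\log Q_{n-1})$, i.e.\ $q_n/\sum_{j\le n}q_j\to0$:
\begin{itemize}
\item deducing $\dim_{\mathrm F}\mu=0$ from the $R_n^{-1/2}$ resonances;
\item the upper bound $\dim_{\mathrm F}^\theta\mu\le\theta$, since concavity only gives $\ge\theta$ (the paper uses Fraser's Corollary~1.6 with $\dim_{\mathrm F}\mu=0$ and $\dim_{\mathrm S}\mu=1$, the latter via singularity);
\item pointwise local dimension one from the cylinder bound $\mu(B(x,r))\ge P^N/Q_N$ on $Q_N^{-1}\le r<Q_{N-1}^{-1}$.
\end{itemize}

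Your suggested decoupling runs the wrong way. With $C_n$ bounded and flat masks, $G_{s,n}\asymp R_n^{1-s/2}$ is governed by $R_n$ alone. Super-exponential $R_n$ then makes $\sum_nG_{s,n}$ converge for every $s>2$, forcing $p_{\rm res}=2$. If moreover $\liminf\log R_n/\log Q_n>0$, Lemma~\ref{lem:telescoping} gives positive Fourier dimension.

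The correct regime is logarithmic, $q_n=A\log n+B\log\log n+O(1)$. There $G_{s,n}\asymp n^{-A(s/2-1)\log P}(\log n)^{-B(s/2-1)\log P}$, so $s_c=2+2/(A\log P)$, and the series converges at the endpoint iff $B>A$. In this regime the dimension data are automatic, because $N=o(\log Q_{N-1})$ and $\log R_N=o(\log Q_{N-1})$. So the real obstacle is sharp one-level flatness, not compatibility with the dimension data.
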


The sufficient Fourier spectrum criterion of \cite{CFdORestriction} requires
$\dim_{\mathrm F}^{\theta}\mu>\theta$. Main Theorem B lies on the boundary
$
\dim_{\mathrm F}^{\theta}\mu=\theta,
$
where the dimension function alone does not record finiteness of the critical
Fourier integral. In our class, the missing endpoint information is exactly
the summability of the corresponding moments $G_{s,n}$; see
\eqref{eq:critical-line-moments} below.

Main Theorem B still allows the scale sequence and digit cardinalities to
depend on the desired restriction exponent. We next ask whether the restriction exponent can still vary after fixing the scales, digit cardinalities, exponential spectrum, and dimensional data, and even imposing the same sharp-order Fourier bound. The answer remains negative. For $n\ge 1,$ put
$
\eta_n
=
\max_{1\le k<R_n}
\left|m_n\!\left(\frac{k}{R_n}\right)\right|.
$

\begin{maintheoremC}
	[Theorem~\ref{thm:fixed-envelope}]
	There are fixed sequences $R_n=2M_n$, a fixed sequence
	$\eta_n\asymp n^{-1/2}$, and a common exponential spectrum $\Lambda$ such that, for every
	$p_0\in[4/3,2]$, one can choose equal-weighted digit sets of cardinality
	$M_n$ producing a singular continuous spectral Moran measure $\mu_{p_0}$
	with
	\[
	p_{\rm res}(\mu_{p_0})=p_0,
	\qquad
	D_q(\mu_{p_0})=1\quad(1<q\le\infty),
	\qquad
	\dim_{\mathrm F}^{\theta}\mu_{p_0}=\theta
	\quad(0\le\theta\le1).
	\]
	All members have pointwise local dimension one everywhere on their supports,
	and their maximal nonzero mask coefficients are exactly the same at every
	level. Moreover, with a constant independent of $p_0$,
	\[
	|\widehat\mu_{p_0}(\xi)|
	\lesssim
	\frac{\log\log(e^e+|\xi|)}
	{\sqrt{\log(e+|\xi|)}},
	\]
and this common Fourier decay bound is sharp along the same resonant frequencies $\xi_n=Q_n/2$.
\end{maintheoremC}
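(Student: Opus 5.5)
We prove Main Theorem C by building a one-parameter family of equal-weight finite-field digit sets. All members share the same scales, the same mask support structure and the same maximal coefficient. They differ only in how many nonzero coefficients reach that maximum, and Main Theorem A turns this difference into the critical exponent.

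\emph{Step 1: the mask family.} Fix $R_n=2M_n$ with $M_n$ growing fast, for instance $M_n$ a prime power with $\log M_n$ superexponential in $n$. Take digit sets of the form $A_n=2B_n\cup(2B_n+1)$-type sets of cardinality $M_n$, or a Lai--Shi--Xie finite-field set. We want the mask values $m_n(k/R_n)$, $1\le k<R_n$, to satisfy three conditions:
\begin{itemize}
\item they vanish on a fixed set of $k$ that gives the Hadamard/spectral pair with a common set of integers $L_n$;
\item at the resonant index $k=M_n$ the value is exactly $\eta_n\asymp n^{-1/2}$;
\item the number $N_n$ of further indices at which $|m_n|=\eta_n$ is a free parameter, with all other values of much smaller size.
\end{itemize}
Then
\[
G_{s,n}\asymp N_n\,n^{-s/2}+(\text{lower order}).
\]
We will choose $N_n\approx n^{s_0/2-1}$, possibly times a logarithmic correction, where $s_0=p_0'\in[2,4]$. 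The constraint $p_0\ge 4/3$, i.e.\ $s_0\le4$, should correspond exactly to the Parseval bound $\sum_k|m_n(k/R_n)|^2\le 1$. That bound forces $N_n\eta_n^2\lesssim1$, so $N_n\lesssim n$, and the largest achievable $s_c$ is $4$. This is the mechanism behind the restricted range.

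The common spectrum $\Lambda=\sum_n Q_{n-1}L_n$ comes from requiring the orthogonality zeros to be the same for all members. Since $\sup_n C_n=2<\infty$ for equal weights on $M_n$ of $2M_n$ points, Main Theorem A applies.

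\emph{Step 2: restriction exponent.} Apply Main Theorem A. With $N_n=n^{s_0/2-1}$ we get $\sum_n G_{s,n}\asymp\sum_n n^{s_0/2-1-s/2}$, which converges exactly when $s>s_0$. Hence $s_c=s_0$ and $p_{\rm res}=p_0$. The endpoint case is handled by multiplying $N_n$ by $\log^{-2} n$ or leaving it uncorrected, so that $\sum_n G_{s_0,n}$ converges or diverges as desired.

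\emph{Step 3: dimensional and decay data.} We need $D_q=1$, Fourier dimension zero with spectrum $\dim_{\mathrm F}^\theta\mu=\theta$, and local dimension one everywhere. Since $C_n\le2$ and $\log R_n$ grows fast relative to $n$, each ball at scale $Q_n^{-1}$ carries mass between $Q_n^{-1}$ and $2^nQ_n^{-1}$, and $2^n=Q_n^{o(1)}$. This gives all three dimensional statements, using the earlier results on the Fourier spectrum of Moran measures with subpolynomial decay.

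For the decay bound, write
\[
\widehat\mu(\xi)=\prod_n m_n(\xi/Q_n).
\]
Only the level $n$ with $Q_{n-1}\lesssim|\xi|\lesssim Q_n$ contributes a factor of size about $\eta_n$; the neighbouring levels contribute bounded factors, and the remaining factors are at most $1$. Because $\log Q_n$ is tied to $n$ so that $n\asymp\log\log|\xi|/\log\log\log|\xi|$-type relations hold, the scale choice must be tuned so that $n^{-1/2}$ translates into $(\log\log|\xi|)/\sqrt{\log|\xi|}$. The natural choice is $\log M_n\asymp n$ up to a $\log n$ correction, and the $\log\log$ factor absorbs the off-grid loss in $m_n$ between lattice points. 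Sharpness follows from $\widehat\mu(Q_n/2)=\eta_n\prod_{j>n}m_j(Q_n/2Q_j)$, where the tail product is $\asymp1$.

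\emph{Main obstacle.} The hardest step is Step 1: designing equal-weight digit sets of the same cardinality whose nonzero coefficients have the identical maximum $\eta_n$ with prescribed multiplicity $N_n$, while keeping one common spectrum. We would try to obtain this from finite-field Hadamard constructions. One option is to concatenate copies of a base set along the cyclic structure of $\mathbb{Z}_{2M_n}$. Another is to use a tensor decomposition $M_n=a_n b_n$, with a component that produces many equal peaks of height $\eta_n$ and a component that is flat. In both cases we would vary only the number of active peaks and verify the common spectrum through the zero set of $m_n$ on $L_n/R_n$.
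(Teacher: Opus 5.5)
There is a genuine gap, and it is the step you flag yourself: Step~1 is the substance of the theorem, and the proposal never constructs the digit sets. At every level you need the following. Equal-weight sets $B_n\subset\{0,\dots,2M_n-1\}$ of size $M_n$, all Hadamard with one fixed $L_n$. The value $m_n(1/2)=\eta_n$ must be the maximum over the nonzero grid. The remaining Parseval mass (here $G_{2,n}=1$ exactly) must be distributed as prescribed. And $\|m_n-D_{R_n}\|_\infty\lesssim n^{-1/2}$ must hold uniformly over the family, or the common envelope fails. If your $N_n$ peaks sit at height exactly $\eta_n$, Fourier interpolation only gives about $\eta_n\log N_n$ unless the peaks are placed with care.

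The sources you suggest do not deliver this. Lai--Shi--Xie sets have $R_n/M_n$ a power of an odd prime, never $2$. They are also built so that every nonzero grid value is small (at most $PR_n^{-1/2}$ in the tridiagonal case), so no large coefficient can be prescribed. A set $2B\cup(2B+1)$, read literally, has mask $M_B(2t)(1+e^{-2\pi it})/2$, which vanishes at $t=1/2$ and kills the very resonance you want.

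The missing idea is the parity construction $B(\sigma)$: for each $x<m$ keep the unique $b\in\{x,x+m\}$ with $(-1)^b=\sigma_x$. This is a complete residue system modulo $m$, so it is Hadamard with $2\{-(m-1)/2,\dots,(m-1)/2\}$ for \emph{every} sign vector $\sigma$. That is the common spectrum; completeness, which orthogonality alone does not give, follows from the tail bound of Lemma~\ref{lem:bounded-frame-tail}. The mask of $B(\sigma)$ vanishes at even nonzero grid points and equals the normalized DFT $F_\sigma$ at odd ones, with $m_B(1/2)$ the mean of $\sigma$. A random-sign lemma then gives exact mean $t_n\asymp n^{-1/2}$ with all other $|F_\sigma(k)|\lesssim\sqrt{\log m/m}$. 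Periodic repetition of a short vector of length $L_n\approx n(\log n)^4$ moves the Parseval mass onto $L_n-1$ coefficients without changing the maximum or enlarging the sup-norm error, since $m_{B(\tilde\sigma)}(t)=m_{B(\sigma)}(t)D_K(2mt)$. The exponent is then tuned not by a per-level multiplicity but by \emph{which} levels carry the concentrated set, namely the sparse sets $\mathcal A_d$ of \eqref{eq:fixenv-selected-levels}. Your multiplicity mechanism is not obviously impossible. But realizing $N_n$ peaks of height $\asymp\eta_n$ plus a flat remainder by a $0$--$1$ digit pattern under all these constraints is exactly what would have to be proved.

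The scales are also mistuned. Sharpness at $\xi_n=Q_n/2$ with $|\widehat\mu(\xi_n)|\asymp n^{-1/2}$ forces $n^{-1/2}\asymp\log\log Q_n/\sqrt{\log Q_n}$, that is $\log Q_N\asymp N(\log N)^2$, or $\log R_n\asymp(\log n)^2$. Your choice $\log M_n\asymp n$ gives $\log Q_N\asymp N^2$, so the resonant values are of order $(\log|\xi|)^{-1/4}$ up to logarithms, which violates the claimed envelope. Superexponential $\log M_n$ is worse. It also destroys $\log Q_N/\log Q_{N-1}\to1$, which is what handles radii between consecutive cylinder scales in the local-dimension argument; your Step~3 only treats $r=Q_n^{-1}$.

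The $\log\log$ factor is not an off-grid loss. It comes from inverting $\log Q_N\sim N(\log N)^2$, whereas off-grid losses must be kept below $n^{-1/2}$. That is why $L_n$ carries the factor $(\log n)^4$: interpolation costs $\log(2m)$.

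Finally, the heuristic that only the level with $Q_{n-1}\lesssim|\xi|\lesssim Q_n$ contributes a small factor is false. At $\xi=\tfrac32Q_{N-1}$ the level-$N$ factor is about $|D_{R_N}(3/(2R_N))|\approx2/(3\pi)$, and the smallness comes from $m_{N-1}(1/2)$. The paper instead telescopes through uniform-tail approximations. It uses that $m_n-D_{R_n}$ vanishes at $0$ together with Bernstein's inequality to get $\sum_n\varepsilon_n\min\{1,|\xi|/Q_{n-1}\}\min\{1,Q_n/|\xi|\}$. This is $O(N^{-1/2})$ on $Q_{N-1}\le|\xi|<Q_N$ even though $\varepsilon_n\asymp n^{-1/2}$ is not summable.
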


Thus even fixing the scales, digit cardinalities, an exponential spectrum, the entire sequence $(\eta_n)$, and a common Fourier decay bound that is sharp in order does not determine the restriction range. The remaining freedom lies in how the Fourier mass
is distributed below its largest coefficient and how those contributions
accumulate from scale to scale.

There is also a sharp obstruction to the possible variation when
$(\eta_n)$ is fixed. If
$
\tau:=
\inf\left\{t>0:\sum_n\eta_n^t<\infty\right\},
$
then, under uniformly bounded one-level frame bounds,
$
\max\{2,\tau\}\le s_c\le\tau+2.
$
Main Theorem C realizes the full interval allowed by this obstruction when
$\eta_n\asymp n^{-1/2}$, and
Theorem~\ref{thm:fixed-envelope-general-tau} determines the optimal interval
for every $\tau>0$. Moreover, throughout the interior of each such interval,
the same fixed data admit both attainment and failure of the critical
endpoint.

The later sections compare this exact multiscale criterion with more
traditional Fourier and mass-dimensional information. We obtain formulas
for Fourier and correlation dimensions of weighted Moran measures, as well
as all-scale $L^q$-dimension formulas that account for intermediate scales
between consecutive cylinders. These results also yield examples where
$L^q$-dimensional information improves the classical
Stein--Tomas--Mockenhaupt range, while Main Theorems B and C show that such
dimensional data still do not determine the sharp restriction exponent.

Throughout, $A\lesssim B$ means $A\le CB$ for a constant $C>0$, and
$A\asymp B$ means that both $A\lesssim B$ and $B\lesssim A$ hold. A
subscript indicates dependence of the implicit constant on the displayed
parameter. For positive sequences, $a_n\gg b_n$ and $a_n\ll b_n$ mean,
respectively,
$
a_n/b_n\to\infty$ and 
$
a_n/b_n\to0.
$

The paper is organized as follows.
Section~\ref{sec:criterion} proves the exact moment criterion.
Section~\ref{sec:fixed-envelope} establishes the fixed-data
non-determinacy results, including the optimal range and endpoint
separation.
Section~\ref{sec:restriction} gives finite-field spectral realizations and
the logarithmic threshold results.
Finally, Sections~\ref{sec:general-proof} and~\ref{sec:weil} develop the
auxiliary Fourier, correlation and $L^q$-dimension formulas and compare
their restriction consequences with the exact moment criterion.

\section{An exact Fourier-moment criterion}\label{sec:criterion}

We prove Main Theorem A using the one-level Fourier moments. The only
structural hypothesis for the equivalence is $\sup_n C_n<\infty$.
The sufficiency argument combines one-level bounds, while the necessity argument evaluates the Fourier transform on carefully chosen disjoint sets of resonant frequencies.

\subsection{Product representation and normalized tails}

Let $(J_n)_{n\ge1}$ be independent random variables with
$
\Prob(J_n=j)=w_{n,j},\quad 0\le j<R_n.
$
Then $\mu$ is the law of
$
X=\sum_{n=1}^\infty \frac{J_n}{Q_n}.
$
The infinite convolution in \eqref{eq:product-measure} is well defined since
\[
\sum_{n=1}^{\infty}\frac{R_n-1}{Q_n}
=\sum_{n=1}^{\infty}\left(\frac1{Q_{n-1}}-\frac1{Q_n}\right)=1,
\]
so the associated random series converges absolutely and is supported in
$[0,1]$.

We write $\lambda$ for Lebesgue probability measure on $[0,1]$.
For $n\ge0$, the part of the infinite convolution defining $\mu$ after level $n$, rescaled to the unit interval, is
\[
Z_n=\sum_{j>n}\frac{J_j}{R_{n+1}\cdots R_j}\in[0,1].
\]
Its distribution is
\[
\nu_n
=
\mathop{*}_{j=n+1}^{\infty}
\left(
\sum_{k=0}^{R_j-1}
w_{j,k}\,
\delta_{\,k/(R_{n+1}\cdots R_j)}
\right).
\]

\begin{lemma}\label{lem:bounded-frame-tail}\label{lem:tail}
Suppose $C_0:=\sup_n R_n\max_jw_{n,j}<\infty$. Then
\begin{equation}\label{eq:bounded-frame-tail}
 \inf_{n\ge0}\inf_{|u|\le1/2}|\widehat\nu_n(u)|
 \ge c(C_0):=\frac1{2\sqrt2}\sin\!\left(\frac{\pi}{4C_0}\right)>0.
\end{equation}

Alternatively, if $\varepsilon_n\to0$, then for all sufficiently large $n$,
$
 \inf_{|u|\le1/2}|\widehat\nu_n(u)|\ge\frac1{\sqrt2\,\pi}.
$
\end{lemma}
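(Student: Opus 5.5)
The plan is to write $\widehat\nu_n(u)=\E\, e^{-2\pi i uZ_n}$ and get a lower bound from the real part after recentring. Since $Z_n\in[0,1]$ and $|u|\le 1/2$, the phase $\varphi:=2\pi u(Z_n-\tfrac12)$ satisfies $|\varphi|\le \pi|Z_n-\tfrac12|\le\pi/2$. Hence
\[
|\widehat\nu_n(u)|=\bigl|\E\, e^{-i\varphi}\bigr|\ge \E\cos\varphi\ge \E\,\sin\bigl(\pi\,\dist(Z_n,\{0,1\})\bigr),
\]
and every term is nonnegative. It therefore suffices to show that $Z_n$ carries a definite amount of mass at distance at least $\delta\asymp 1/C_0$ from the endpoints, uniformly in $n$. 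I would aim for $\Prob(\delta\le Z_n\le 1-\delta)\ge c_1$ with $\delta=1/(4C_0)$. Then $|\widehat\nu_n(u)|\ge c_1\sin(\pi/(4C_0))$, which has the shape of \eqref{eq:bounded-frame-tail}; I expect $c_1$ to come out as $1/(2\sqrt2)$ or something comparable, after bookkeeping.

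The mass estimate uses the self-similar decomposition $Z_n=(J_{n+1}+Z_{n+1})/R_{n+1}$, which places $Z_n$ in the cell $[J_{n+1}/R_{n+1},(J_{n+1}+1)/R_{n+1}]$. The hypothesis $w_{n+1,j}\le C_0/R_{n+1}$ says that the first-digit law has density at most $C_0$ with respect to the uniform law on $\{0,\dots,R_{n+1}-1\}$. This case splits according to the size of $R_{n+1}$.

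If $R_{n+1}$ is large compared with $C_0$, the cells meeting $[0,\delta)\cup(1-\delta,1]$ number about $2\delta R_{n+1}+2$. Their total probability is then at most $2C_0\delta+2C_0/R_{n+1}$, which is at most $1/2$ plus a small term.

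If $R_{n+1}$ is small, I would pass to a block of consecutive levels $n+1,\dots,n+k$. Here $k$ is chosen minimal with $R_{n+1}\cdots R_{n+k}\ge 8C_0$. Since each $R_j\ge2$, this product is at most $16C_0$, hence at most $C_0^{k}$-type density against uniform on the block. A product of digit laws with density ratio $C_0$ per level has block density ratio $C_0^k$, which is unbounded in $k$. So I would not use the crude product. Instead I would observe that being within $\delta$ of $0$ forces the first digits to be $0$, and being within $\delta$ of $1$ forces them to be maximal. This gives $\Prob(Z_n<\delta)\le \Prob(J_{n+1}=0)+\dots$, and at each level $\Prob(J=0)\le C_0/R\le C_0/2$. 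I expect this case to be the main obstacle, since neither per-level estimate is by itself below $1$ when $C_0\ge 2$. If it fails, the fallback is to run the symmetric version of the argument: note that mass in the middle third of $[0,1]$ must be forced by at least one coordinate. Any remaining gap only affects the numerical constant, not positivity.

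For the alternative statement, assume $\varepsilon_n\to0$. Write $\widehat\nu_n(u)=\prod_{j>n}m_j(t_j)$ with $t_j=u/(R_{n+1}\cdots R_j)$, and compare it with $\prod_{j>n}D_{R_j}(t_j)=\widehat\lambda(u)$. This identity holds because the equal-weight digit expansion produces Lebesgue measure $\lambda$, and $|\widehat\lambda(u)|=|\sin\pi u/(\pi u)|\ge 2/\pi$ for $|u|\le1/2$. Two facts control the comparison. First, $|t_j|\le 2^{-(j-n)}$, so for $j\ge n+2$ the factors satisfy $|m_j(t_j)-1|\lesssim |t_j|R_j\cdot\max$ and are close to $1$ geometrically in $j$. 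Second, $|m_j-D_{R_j}|\le\varepsilon_j\to0$ with $|D_{R_j}(t_j)|$ bounded below. A telescoping estimate of the form $\prod a_j-\prod b_j=\sum(\cdots)(a_j-b_j)(\cdots)$, with ratio errors $\varepsilon_j/|D_{R_j}(t_j)|$ and summable geometric tails, gives $|\widehat\nu_n(u)|\ge \tfrac{2}{\pi}-o(1)\ge\tfrac1{\sqrt2\,\pi}$ for all large $n$.
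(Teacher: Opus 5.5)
\textbf{The first part has a genuine gap.} Your intermediate claim is that $\Prob(\delta\le Z_n\le 1-\delta)\ge c_1>0$ uniformly in $n$, with $c_1$ depending only on $C_0$. This is false when $R_{n+1}$ is small, and what fails is positivity, not just the constant.

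Take $R_n\equiv 2$, $w_{n,0}=1-\epsilon_n$, $w_{n,1}=\epsilon_n$, so that $C_0\le 2$ and $\delta\ge 1/8$. Four leading zero digits force $Z_n\le 1/16<\delta$. Hence $\Prob(Z_n<\delta)\ge\prod_{j=n+1}^{n+4}(1-\epsilon_j)$, which tends to $1$ if $\epsilon_j\to0$.

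In the extreme case $\epsilon_n\equiv 0$ one has $Z_n\equiv 0$. Then even $\E\cos\varphi=\cos(\pi u)$ vanishes at $u=1/2$, while $|\widehat\nu_n|\equiv 1$. So recentring $Z_n$ at $1/2$ loses everything whenever $\nu_n$ sits near an endpoint, which the hypothesis allows. Neither the block argument nor the ``middle third'' fallback can repair this. Your large-$R_{n+1}$ case is fine.

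\textbf{The missing idea} is to peel off a single level and run your middle-mass argument on a smoothed first digit, not on $Z_n$. With $R=R_{n+1}$, write $\widehat\nu_n(u)=m_{n+1}(u/R)\,\widehat\nu_{n+1}(u/R)$.

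For the tail factor, $|u/R|\le 1/4$, so your recentring bound applied to the probability $\nu_{n+1}$ on $[0,1]$ gives $|\widehat\nu_{n+1}(u/R)|\ge\cos(\pi/4)$. No mass condition is needed.

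For the mask, let $U$ be uniform on $[0,1]$ and independent of $J_{n+1}$. The law $\rho$ of $(J_{n+1}+U)/R$ has density $Rw_{n+1,j}\le C_0$ on $[j/R,(j+1)/R)$, for every $R\ge 2$. Hence $\rho([a,1-a])\ge 1/2$ with $a=1/(4C_0)$, and your cosine argument yields $|\widehat\rho(u)|\ge\frac12\sin(\pi a)$. Since $\widehat\rho(u)=m_{n+1}(u/R)\,\widehat\lambda(u/R)$ and $|\widehat\lambda|\le 1$, the same bound holds for $|m_{n+1}(u/R)|$. Multiplying the two factors gives exactly $c(C_0)$; this is the paper's proof.

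\textbf{The alternative statement} is handled correctly by your comparison with $\widehat\lambda$, and it even gives the stronger bound $2/\pi-o(1)$. You should, however, bound each factor difference by $\min\{\varepsilon_j,\,C2^{-(j-n)}\}$, since the $\varepsilon_j$ alone need not be summable. The paper reaches the weaker constant more cheaply from the same one-level factorization, using only $|m_{n+1}(u/R)|\ge 2/\pi-\varepsilon_{n+1}$ together with the tail bound $\cos(\pi/4)$.
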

\begin{proof}
Fix $n\ge0$, put $R=R_{n+1}$, and let $U$ be independent and uniform
on $[0,1]$. The law $\rho$ of $(J_{n+1}+U)/R$ has density
$Rw_{n+1,j}$ on $[j/R,(j+1)/R)$, so this density is at most $C_0$.
With $a=(4C_0)^{-1}$ we have $\rho([a,1-a])\ge1/2$. For $|u|\le1/2$,
the centered cosine is nonnegative on $[0,1]$ and at least
$\sin(\pi a)$ on $[a,1-a]$. Therefore
\[
 |\widehat\rho(u)|
 \ge\operatorname{Re}\bigl(e^{\pi iu}\widehat\rho(u)\bigr)
 =\int\cos\bigl(2\pi u(x-1/2)\bigr)\,d\rho(x)
 \ge\tfrac12\sin(\pi a).
\]
Since $\widehat\rho(u)=m_{n+1}(u/R)\widehat\lambda(u/R)$ and
$|\widehat\lambda|\le1$, the same lower bound holds for
$|m_{n+1}(u/R)|$. Every probability $\sigma$ supported on $[0,1]$
satisfies $|\widehat\sigma(t)|\ge\cos(\pi|t|)$ when $|t|\le1/2$,
by the same centering argument. Hence
\[
 |\widehat\nu_n(u)|
 =|m_{n+1}(u/R)|\,|\widehat\nu_{n+1}(u/R)|
 \ge\tfrac12\sin(\pi a)\cos(\pi|u|/R)
 \ge c(C_0),
\]
because $R\ge2$.

For the alternative hypothesis, the uniform mask satisfies
$|D_R(u/R)|\ge2/\pi$ when $|u|\le1/2$. Choose $n$ large enough that
$\varepsilon_{n+1}\le1/\pi$. The same tail factorization gives
\[
 |\widehat\nu_n(u)|
 \ge(2/\pi-\varepsilon_{n+1})\cos(\pi|u|/R_{n+1})
 \ge\frac{2/\pi-\varepsilon_{n+1}}{\sqrt2}.\qedhere
\]
\end{proof}

\subsection{Iteration of one-level estimates}

For a weighted Moran measure as in \eqref{eq:product-measure}, define
\begin{equation}\label{eq:coherence-data}
\eta_n:=\max_{1\le k<R_n}\left|m_n\!\left(\frac{k}{R_n}\right)\right|,
\quad
C_n:=R_n\max_{0\le b<R_n}w_{n,b}.
\end{equation}
Since $D_{R_n}(k/R_n)=0$ for $1\le k<R_n$, one always has $\eta_n\le\maskerr_n$.

The following iteration principle separates the passage to continuous
frequencies from the choice of one-level estimates.
\begin{theorem}\label{thm:frame-criterion}
Let $s>2$. Suppose $K_n\ge1$ satisfy, for all $h$ and $u\in\R$,
\begin{equation}\label{eq:level-iteration-assumption}
 \sum_{k=0}^{R_n-1}\left|
 \sum_b w_{n,b}h_be^{-2\pi i b(k+u)/R_n}\right|^s
 \le K_n\left(\sum_b w_{n,b}|h_b|^2\right)^{s/2}.
\end{equation}
If $\prod_nK_n<\infty$, then
\begin{equation}\label{eq:frame-extension}
 \|\widehat{g\mu}\|_{L^s(\R)}
 \le\left(\prod_nK_n\right)^{1/s}\|g\|_{L^2(\mu)}
 \qquad(g\in L^2(\mu)).
\end{equation}
Consequently restriction holds at $p=s/(s-1)$.
\end{theorem}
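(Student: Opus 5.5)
The plan is to prove \eqref{eq:frame-extension} first for cylinder functions $g$ and for a smoothed version of $\mu$, by peeling off levels from the finest one inwards. A limiting argument then removes the smoothing and the cylinder restriction.

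\emph{Setup.} Fix $M$ and let $g$ depend only on the first $M$ digits, $g(X)=h(J_1,\dots,J_M)$. For $N\ge M$ write $x_\beta=\sum_{n\le N}\beta_n/Q_n$ and $w_\beta=\prod_{n\le N}w_{n,\beta_n}$ for words $\beta=(\beta_1,\dots,\beta_N)$. Put
\[
F_N(\xi)=\sum_{\beta}w_\beta h_\beta e^{-2\pi i\xi x_\beta},
\]
a $Q_N$-periodic trigonometric polynomial, where $h_\beta$ is $h$ evaluated on the first $M$ letters of $\beta$. Then
\[
\widehat{g\mu}(\xi)=F_N(\xi)\,\widehat\nu_N(\xi/Q_N).
\]
Since nothing is known about $\widehat\nu_N$ in $L^s$, I replace $\nu_N$ by $\lambda$. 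That is, I consider $\mu^{(N)}:=\mu_N*\lambda(Q_N\,\cdot)$, where $\mu_N$ is the law of the first $N$ digits, and note that $\widehat{g\mu^{(N)}}(\xi)=F_N(\xi)\widehat\lambda(\xi/Q_N)$. The weight $\varphi=\widehat\lambda$ satisfies
\[
\sup_t\sum_{l\in\Z}|\varphi(l+t)|^s\le\sup_t\sum_{l\in\Z}|\varphi(l+t)|^2=1,
\]
because $|\varphi|\le1$, $s>2$, and the squared sinc sums to one over integer translates.

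\emph{Inductive claim.} For every $N\ge0$, every function $h$ of $N$ digits, and every $\varphi$ with $\sup_t\sum_l|\varphi(l+t)|^s\le1$,
\[
\int_\R|F_N(\xi)\varphi(\xi/Q_N)|^s\,d\xi\le\Bigl(\prod_{n\le N}K_n\Bigr)\Bigl(\sum_\beta w_\beta|h_\beta|^2\Bigr)^{s/2}.
\]
The case $N=0$ is trivial. For the inductive step, split off the last digit:
\[
F_N(\xi)=\sum_b w_{N,b}\,e^{-2\pi i b\xi/Q_N}A_b(\xi),
\]
where $A_b$ is the level-$(N-1)$ polynomial attached to $h(\cdot,b)$, so $A_b$ is $Q_{N-1}$-periodic.

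Substitute $\xi=Q_{N-1}(m+u)$ with $u\in[0,1)$ and $m=k+R_Nl$, where $0\le k<R_N$. Then $A_b(\xi)$ depends only on $u$, and the phase becomes $e^{-2\pi ib(k+u)/R_N}$ because $bl$ is an integer. Summing over $l$ first, the $\varphi$-weight is at most $1$. Hypothesis \eqref{eq:level-iteration-assumption}, applied at fixed $u$ with the $k$-independent coefficients $h_b=A_b(Q_{N-1}u)$, then gives
\[
\int_\R|F_N\varphi(\cdot/Q_N)|^s\le K_N\int_0^{Q_{N-1}}\Bigl(\sum_b w_{N,b}|A_b(\xi)|^2\Bigr)^{s/2}d\xi .
\]

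Since $s/2>1$, Minkowski's inequality in $L^{s/2}(0,Q_{N-1})$ bounds the right-hand side by $K_N\bigl(\sum_b w_{N,b}\|A_b\|_{L^s(0,Q_{N-1})}^2\bigr)^{s/2}$. Each $\|A_b\|_{L^s(0,Q_{N-1})}^s$ is exactly the level-$(N-1)$ instance of the claim with $\varphi=\one_{[0,1)}$, which is admissible. Applying the inductive hypothesis to each $A_b$, taking $2/s$ powers, and summing closes the induction with constant $\prod_{n\le N}K_n$.

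\emph{Passage to the limit.} For cylinder $g$ we get
\[
\|\widehat{g\mu^{(N)}}\|_s^s\le\Bigl(\prod_nK_n\Bigr)\|g\|_{L^2(\mu)}^s,
\]
using that $\int|g|^2\,d\mu=\sum_\beta w_\beta|h_\beta|^2$. As $N\to\infty$, $\mu^{(N)}\to\mu$ weakly, so $\widehat{g\mu^{(N)}}\to\widehat{g\mu}$ pointwise, and Fatou's lemma yields \eqref{eq:frame-extension} for cylinder $g$. Cylinder functions are dense in $L^2(\mu)$, and $\|\widehat{g\mu}-\widehat{g'\mu}\|_\infty\le\|g-g'\|_{L^1(\mu)}\le\|g-g'\|_{L^2(\mu)}$. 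A second application of Fatou therefore extends the estimate to all $g\in L^2(\mu)$. The restriction statement at $p=s/(s-1)$ follows by duality.

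\emph{Main obstacle.} The crux is the bookkeeping in the inductive step. Peeling from the coarsest level does not work, because the coefficients attached to each digit then depend on the residue $k$, and \eqref{eq:level-iteration-assumption} requires $h_b$ independent of $k$. Peeling the finest level keeps them $k$-independent. It also requires formulating the claim with a general weight $\varphi$ so that the tail factor is absorbed; this is why $\widehat\nu_N$ has to be replaced by $\widehat\lambda$. Finally, Minkowski's inequality in $L^{s/2}$ is what turns the $\ell^2$-average over digits into the squared norms needed to recurse.
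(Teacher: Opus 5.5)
Your proof is correct, and its core is the paper's. You peel off the finest digit, use the $Q_{N-1}$-periodicity of the $A_b$ so that the one-level hypothesis applies at fixed $u$ with $k$-independent coefficients, and recurse via Minkowski in $L^{s/2}$. Your claim with $\varphi=\one_{[0,1)}$ is exactly \eqref{eq:finite-frame-induction}.

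Where you differ is the tail. The paper keeps the true tail, writes $\widehat{g\mu}=F_N\,\widehat{\nu_N}(\cdot/Q_N)$, and proves the sampling bound \eqref{eq:tail-sampling-frame}, $\sup_u\sum_j|\widehat{\nu_N}(u+j)|^s\le\prod_{n>N}K_n$, by a second iteration. That iteration applies the $h\equiv1$ case of the hypothesis to uniform-tail truncations $\nu_N^{(M)}$ and then uses Fatou in $M$. You instead put a uniform tail directly under $g\mu$, where the weight bound is just Parseval for $\widehat\lambda$, and send $N\to\infty$ with $g$ fixed. At the levels $M<n\le N$ your $A_b$ does not depend on $b$, so the hypothesis collapses to its $h\equiv1$ case; your induction there is the paper's tail iteration in disguise.

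Your organization needs only one limiting argument and is shorter. The paper's gives, at every finite depth, an inequality for $\mu$ itself, $\|\widehat{g\mu}\|_s^s\le(\prod_{n>N}K_n)\int_0^{Q_N}|F_N|^s$, showing that the tail costs only a factor tending to one.

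Two small justifications should be added.
\begin{enumerate}
\item The pointwise convergence $\widehat{g\mu^{(N)}}\to\widehat{g\mu}$ is best read off from your two factorizations. Since $|F_N|\le\|g\|_{L^1(\mu)}$ and both $\lambda$ and $\nu_N$ live on $[0,1]$, you get $|\widehat{g\mu^{(N)}}(\xi)-\widehat{g\mu}(\xi)|\le4\pi\|g\|_{L^1(\mu)}|\xi|/Q_N$. Weak convergence of $\mu^{(N)}$ alone does not directly handle the jumps of $g$.
\item Treating cylinder functions as elements of $L^2(\mu)$, and using $\widehat{g\mu}=F_N\widehat{\nu_N}(\cdot/Q_N)$, requires cylinder endpoints to be $\mu$-null. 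This follows from the hypothesis with $h=\delta_{b_0}$, which gives $w_{n,b_0}\le(K_n/R_n)^{2/s}$ with $K_n\to1$, so $\mu$ is non-atomic. Alternatively, work on the product space as the paper does.
\end{enumerate}
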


\begin{proof}
Let $\mu_{\le N}$ be the discrete measure formed from the first $N$ digits. For a function $g$ of these digits, fix the last digit $b$ and denote by $G_b(\xi)$ the Fourier sum over the first $N-1$ digits with coefficients $g(\mathbf j,b)$. For $0\le\xi<Q_{N-1}$ and $0\le k<R_N$, periodicity of the first $N-1$ levels gives
\[
\widehat{g\mu_{\le N}}(\xi+kQ_{N-1})
=\sum_{b=0}^{R_N-1}w_{N,b}G_b(\xi)
e^{-2\pi i b(k+\xi/Q_{N-1})/R_N}.
\]
Apply \eqref{eq:level-iteration-assumption} with $h_b=G_b(\xi)$, sum over $k$, and integrate in $\xi$. Minkowski's inequality in $L^{s/2}$ then gives
\[
\int_0^{Q_N}|\widehat{g\mu_{\le N}}(\xi)|^s\,\dd\xi
\le K_N\left(\sum_b w_{N,b}
\left(\int_0^{Q_{N-1}}|G_b(\xi)|^s\,\dd\xi\right)^{2/s}\right)^{s/2}.
\]
Applying the same estimate inductively to each $G_b$ proves
\begin{equation}\label{eq:finite-frame-induction}
\int_0^{Q_N}|\widehat{g\mu_{\le N}}(\xi)|^s\,\dd\xi
\le \left(\prod_{n\le N}K_n\right)\|g\|_{L^2(\mu_{\le N})}^s.
\end{equation}

We next control the normalized tail without truncating it to an atomic measure. For $M\ge N$, let $\nu_N^{(M)}$ be the law of
\[
\sum_{j=N+1}^{M}\frac{J_j}{R_{N+1}\cdots R_j}
+\frac{U}{R_{N+1}\cdots R_M},
\]
where $U$ is independent and uniform on $[0,1]$; for $M=N$ this is Lebesgue measure on $[0,1]$. Parseval for the Fourier series of $e^{-2\pi iux}$ on $[0,1]$ gives
$\sum_{j\in\Z}|\widehat\unif(u+j)|^2=1$, hence the same sum with exponent $s\ge2$ is at most one. Put
$S_{N,M}=\sup_u\sum_{j\in\Z}|\widehat{\nu_N^{(M)}}(u+j)|^s$.
For $N<M$, split $j=R_{N+1}l+k$, $0\le k<R_{N+1}$. The factorization of $\widehat{\nu_N^{(M)}}$ and the periodicity of $m_{N+1}$ show that
\[
\sum_{j\in\Z}|\widehat{\nu_N^{(M)}}(u+j)|^s
\le S_{N+1,M}\sum_{k=0}^{R_{N+1}-1}
\left|m_{N+1}\!\left(\frac{u+k}{R_{N+1}}\right)\right|^s
\le K_{N+1}S_{N+1,M},
\]
where the last inequality is \eqref{eq:level-iteration-assumption} with $h\equiv1$. Since $S_{M,M}\le1$, iteration gives $S_{N,M}\le\prod_{n=N+1}^M K_n$. Now $\nu_N^{(M)}\Rightarrow\nu_N$; pointwise convergence of Fourier transforms and Fatou's lemma yield
\begin{equation}\label{eq:tail-sampling-frame}
\sup_{u\in\R}\sum_{j\in\Z}|\widehat{\nu_N}(u+j)|^s
\le\prod_{n>N}K_n.
\end{equation}

For a cylinder function $g$ depending on the first $N$ digits, factor its transform into the finite-prefix transform and $\widehat{\nu_N}(\xi/Q_N)$.  Decomposing $\R$ into translates of $[0,Q_N]$, using the $Q_N$-periodicity of the prefix transform, and combining \eqref{eq:finite-frame-induction} with \eqref{eq:tail-sampling-frame} yields
$
\|\widehat{g\mu}\|_s^s\le\left(\prod_{n\ge1}K_n\right)\|g\|_2^s.
$
To justify the density step without imposing uniqueness of digit
expansions, let
\[
\Omega=\prod_{n\ge1}\{0,\ldots,R_n-1\},
\qquad
\mathbb P=\bigotimes_{n\ge1}(w_{n,j})_{j=0}^{R_n-1},
\]
and let $\pi(\omega)=\sum_{n\ge1}\omega_n/Q_n$, so that
$\mu=\pi_\#\mathbb P$.  Given $g\in L^2(\mu)$, put
$G=g\circ\pi$ and let $G_N=\mathbb E(G\mid\mathcal F_N)$, where
$\mathcal F_N$ is generated by the first $N$ coordinates.  Then $G_N$
is a cylinder function and $G_N\to G$ in $L^2(\mathbb P)$.  The estimate
just proved applies to the pushed-forward signed measure
$\pi_\#(G_N\mathbb P)$ and gives
\[
\bigl\|\widehat{\pi_\#(G_N\mathbb P)}\bigr\|_s
\le
\left(\prod_{n\ge1}K_n\right)^{1/s}
\|G_N\|_{L^2(\mathbb P)}.
\]
For each $\xi$, the corresponding Fourier transforms converge to
$\widehat{g\mu}(\xi)$ because $G_N\to G$ in $L^1(\mathbb P)$.  Fatou's
lemma and $\|G_N\|_2\to\|G\|_2=\|g\|_{L^2(\mu)}$ prove
\eqref{eq:frame-extension}.  Finally, duality between restriction at
$p$ and extension at $p'=s$ gives the last assertion.
\end{proof}

\subsection{An exact one-level moment criterion}\label{sec:exact-moments}

We now estimate the one-level constants using the full Fourier moments
\begin{equation}\label{eq:one-level-moments}
 G_{s,n}:=\sum_{k=1}^{R_n-1}
 \left|m_n\!\left(\frac{k}{R_n}\right)\right|^s,
 \quad s>2.
\end{equation}
Under bounded one-level frame bounds, summability of these moments is
both necessary and sufficient for restriction. The following stability
estimate controls the excess above one by the off-diagonal row moments.

\begin{lemma}
\label{lem:q41-frame}
Let $s>2$ and $C_0\ge1$. There exists
$\eta_0=\eta_0(s,C_0)>0$ with the following property.
Let $\phi_1,\ldots,\phi_R$ be unit vectors in a complex Hilbert space,
and assume, for every $h$, that
\[
 \sum_{k=1}^R|\langle h,\phi_k\rangle|^2\le C_0\|h\|^2,
 \qquad
 |\langle\phi_k,\phi_l\rangle|\le\eta_0\quad(k\ne l).
\]
Set
$
 \Gamma_s=\max_j\sum_{k\ne j}|\langle\phi_j,\phi_k\rangle|^s.
$
Then
\begin{equation}\label{eq:q41-frame-moment}
 \sum_{k=1}^R|\langle h,\phi_k\rangle|^s
 \le(1+2^{s-1}\Gamma_s)\|h\|^s.
\end{equation}
\end{lemma}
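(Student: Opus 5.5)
Throughout we normalize $\|h\|=1$ and write $a_k=\langle h,\phi_k\rangle$. The plan is to split according to the size of the largest coefficient, with a threshold $\delta=\delta(s,C_0)$ chosen first and $\eta_0\ll\delta$ chosen afterwards. The basic tool is the interpolation bound
\[
\sum_{k\in S}|\langle v,\phi_k\rangle|^s\le\Bigl(\max_{k\in S}|\langle v,\phi_k\rangle|\Bigr)^{s-2}C_0\|v\|^2,
\]
valid for any vector $v$ and index set $S$ by the Bessel hypothesis. In the \emph{flat case} $\max_k|a_k|\le\delta$, this bound gives $\sum_k|a_k|^s\le\delta^{s-2}C_0\le1$ once $\delta^{s-2}C_0\le1$, and \eqref{eq:q41-frame-moment} follows trivially.

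In the \emph{peaked case}, choose $j$ with $|a_j|$ maximal, so $|a_j|>\delta$. Decompose $h=\alpha\phi_j+r$ with $r\perp\phi_j$ and $|\alpha|^2+\|r\|^2=1$; note $a_j=\alpha$. For $k\ne j$ we have $a_k=\alpha\langle\phi_j,\phi_k\rangle+\langle r,\phi_k\rangle$, so convexity gives $|a_k|^s\le2^{s-1}(|\alpha|^s|\langle\phi_j,\phi_k\rangle|^s+|\langle r,\phi_k\rangle|^s)$. Summing over $k\ne j$ yields
\[
\sum_k|a_k|^s\le|\alpha|^s+2^{s-1}\Gamma_s+2^{s-1}\sum_{k\ne j}|\langle r,\phi_k\rangle|^s .
\]
This is exactly where the constant $2^{s-1}\Gamma_s$ comes from. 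It then remains to show
\[
|\alpha|^s+2^{s-1}\sum_{k\ne j}|\langle r,\phi_k\rangle|^s\le1=|\alpha|^2+\|r\|^2 .
\]
Since $|\alpha|^s\le|\alpha|^2$, the interpolation bound reduces this to showing $2^{s-1}C_0\,t^{s-2}\le1$, where $t=\max_{k\ne j}|\langle r,\phi_k\rangle|$. Because $t\le\max_{k\ne j}|a_k|+\eta_0$, this holds whenever every coefficient other than $a_j$ is below a second small threshold, i.e.\ when there is a single dominant coefficient.

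The main obstacle is the intermediate situation in which several coefficients are of moderate size, for instance $h$ roughly an equal combination of two nearly orthogonal $\phi$'s. There $t$ is not small, but the slack $|\alpha|^2-|\alpha|^s$ is large, since $|\alpha|$ is bounded away from $1$. I would handle it via the heavy set $H=\{k:|a_k|>\delta\}$. By Bessel, $|H|\le C_0/\delta^2$ is bounded, so taking $\eta_0\le\delta^3/C_0$ makes the Gram matrix of $\{\phi_k\}_{k\in H}$ within $O(\delta)$ of the identity. Consequently $\sum_{k\in H}|a_k|^2\le1+O(\delta)$, and if $|H|\ge2$ every heavy coefficient has modulus at most $\sqrt{1-\delta^2/2}$, while the light indices contribute at most $\delta^{s-2}C_0$. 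Thus
\[
\sum_k|a_k|^s\le(1-\delta^2/2)^{(s-2)/2}(1+O(\delta))+\delta^{s-2}C_0 .
\]
Because $s>2$, the gain $\delta^2$ in the first factor must beat both the $O(\delta)$ loss and $\delta^{s-2}C_0$. To get this, I would first take the light/heavy threshold $\delta_1$ with $\delta_1^{s-2}C_0\ll\delta^2$, and then $\eta_0$ so small that the Gram perturbation is $\ll\delta^2$. This bookkeeping of $\delta$, $\delta_1$ and $\eta_0$ is the step I expect to require the most care. When $|H|=1$ we are back in the single-dominant-coefficient subcase above.
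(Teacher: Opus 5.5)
Your argument is correct once the parameters are fixed in the order your last paragraph suggests. First choose $\delta$ with $2^{s-1}C_0(2\delta)^{s-2}\le1$. Your initial requirement $C_0\delta^{s-2}\le1$ covers the flat case but not the case $|H|=1$, where you need $2^{s-1}C_0(\delta+\eta_0)^{s-2}\le1$. Next choose $\delta_1<\delta$ with $C_0\delta_1^{s-2}$ small compared with $1-(1-\delta^2/2)^{(s-2)/2}\asymp_s\delta^2$. Finally choose $\eta_0$ with $C_0\eta_0/\delta_1^2\le\delta^2/2$, applied to the Gram matrix of the $\delta_1$-heavy set; this also gives $\eta_0\le\delta$. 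Your first choice $\eta_0\le\delta^3/C_0$ only gives an $O(\delta)$ perturbation, which does not imply the bound $\sqrt{1-\delta^2/2}$. With a single threshold, a gain of order $\delta^2$ cannot beat the light contribution $C_0\delta^{s-2}$ when $s\le4$. You identified both fixes.

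The paper uses the same two ingredients: the convexity splitting along a dominant $\phi_j$, which produces $2^{s-1}\Gamma_s$, and the row-sum bound $1+(|S|-1)\eta_0$ for the Gram matrix of a heavy set of size at most $C_0/\tau$. It splits instead according to whether $\max_k|\langle h,\phi_k\rangle|^2>1-\delta_0$. In the dominant case the residual is then small, $\|v\|^2=\delta<\delta_0$, so the crude bound $|\langle v,\phi_k\rangle|\le\|v\|$ gives $2^{s-1}C_0\delta^{s/2}\le\delta$. This is absorbed by $1-(1-\delta)^{s/2}\ge\delta$. In the complementary case the gain $1-(1-\delta_0)^{s/2-1}$ is part of the hypothesis, so no argument about two heavy coefficients is needed, and only $\delta_0$ and $\tau$ precede $\eta_0$. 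Your split by the number of coefficients above $\delta$ places vectors with a large but diffuse residual $r$ in the dominant case. You control them through $\max_{k\ne j}|\langle r,\phi_k\rangle|\le\delta+\eta_0$ rather than through $\|r\|$. This costs one extra threshold but is equally valid.
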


\begin{proof}
Normalize $\|h\|=1$, put $a=s/2>1$, and write
$y_k=|\langle h,\phi_k\rangle|^2$, $t=\max_k y_k$.
Choose $0<\delta_0<1$ so small that
\begin{equation}\label{eq:q41-delta-choice}
 2^{s-1}C_0\delta_0^{a-1}\le1.
\end{equation}
We first show that, if $\sum_k y_k^a>1$, then
$t>1-\delta_0$, provided the coherence bound is sufficiently small.

Choose $0<\tau<1$ such that
$
 (1-\delta_0)^{a-1}+C_0\tau^{a-1}<1.
$
For $S=\{k:y_k>\tau\}$, the upper frame bound gives $|S|\le C_0/\tau$.
The Gram matrix on $S$ has operator norm at most
$1+(|S|-1)\eta_0$, by the elementary row-sum bound. Therefore
$
 \sum_{k\in S}y_k\le1+(C_0/\tau)\eta_0.
$
If $t\le1-\delta_0$, it follows that
\[
 \sum_k y_k^a
 \le(1-\delta_0)^{a-1}\bigl(1+(C_0/\tau)\eta_0\bigr)
       +C_0\tau^{a-1}<1
\]
when $\eta_0$ is small enough. This proves the claim.

If $\sum_k y_k^a\le1$, there is nothing to prove. Otherwise choose
$j$ with $y_j=t=1-\delta$, where $0\le\delta<\delta_0$, and write
\[
 h=z\phi_j+v,\qquad v\perp\phi_j,\qquad
 |z|^2=1-\delta,\quad\|v\|^2=\delta.
\]
The upper frame bound and $|\langle v,\phi_k\rangle|\le\|v\|$ yield
$
 \sum_k|\langle v,\phi_k\rangle|^s\le C_0\delta^{s/2}.
$
Consequently,
\begin{align*}
 \sum_k|\langle h,\phi_k\rangle|^s
 &\le(1-\delta)^a+2^{s-1}\Gamma_s
                 +2^{s-1}C_0\delta^a\\
 &\le1-\delta+2^{s-1}\Gamma_s+\delta
  =1+2^{s-1}\Gamma_s,
\end{align*}
where \eqref{eq:q41-delta-choice} was used in the second line.
\end{proof}

\begin{corollary}\label{cor:q41-one-level}
Suppose $C_n\le C_0$ and $\eta_n\to0$. For every fixed $s>2$ and
all sufficiently large $n$, uniformly for $u\in\R$,
\begin{equation}\label{eq:q41-one-level}
 \sum_{k=0}^{R_n-1}\left|
 \sum_b w_{n,b}h_b{e}^{-2\pi i b(k+u)/R_n}\right|^s
 \le(1+2^{s-1}G_{s,n})
       \left(\sum_b w_{n,b}|h_b|^2\right)^{s/2}.
\end{equation}
The optimal constant in \eqref{eq:q41-one-level} lies between
$1+G_{s,n}$ and $1+2^{s-1}G_{s,n}$.
\end{corollary}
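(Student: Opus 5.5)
We apply Lemma~\ref{lem:q41-frame} in the Hilbert space $\ell^2(w_n)=L^2(\{0,\dots,R_n-1\},w_{n,\cdot})$, with the vectors
\[
\phi_k(b)=e^{2\pi i b(k+u)/R_n},\qquad 0\le k<R_n .
\]
With this choice $\langle h,\phi_k\rangle=\sum_b w_{n,b}h_be^{-2\pi i b(k+u)/R_n}$, so the left side of \eqref{eq:q41-one-level} is exactly $\sum_k|\langle h,\phi_k\rangle|^s$, and $\|h\|^2=\sum_bw_{n,b}|h_b|^2$. It remains to check the three inputs of the lemma.

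\emph{Unit norm.} Each $\phi_k$ has modulus one pointwise, so $\|\phi_k\|=1$.

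\emph{Upper frame bound.} For any $h$,
\[
\sum_{k}|\langle h,\phi_k\rangle|^2=\sum_k\Bigl|\sum_b w_{n,b}h_be^{-2\pi i b(k+u)/R_n}\Bigr|^2=R_n\sum_b w_{n,b}^2|h_b|^2 .
\]
This follows by orthogonality of the characters $b\mapsto e^{-2\pi ibk/R_n}$ on $\Z/R_n\Z$, since the factor $e^{-2\pi ibu/R_n}$ has modulus one. The right side is at most $C_n\|h\|^2\le C_0\|h\|^2$, which gives the frame bound $C_0$.

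\emph{Gram entries.} We have
\[
\langle\phi_k,\phi_l\rangle=\sum_bw_{n,b}e^{2\pi ib(k-l)/R_n}=\overline{m_n((k-l)/R_n)} .
\]
The shift $u$ cancels, which is what makes the estimate uniform in $u$. For $k\ne l$ the residue $(k-l)\bmod R_n$ is nonzero, so $|\langle\phi_k,\phi_l\rangle|\le\eta_n$. Since $\eta_n\to0$, for $n$ large we have $\eta_n\le\eta_0(s,C_0)$. The row moment is
\[
\Gamma_s=\max_j\sum_{k\ne j}|m_n((k-j)/R_n)|^s=G_{s,n},
\]
because $k\mapsto k-j$ runs over all nonzero residues mod $R_n$. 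Lemma~\ref{lem:q41-frame} now gives \eqref{eq:q41-one-level} for all large $n$, with a threshold depending only on $s$, $C_0$ and the sequence $(\eta_n)$, not on $u$.

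For the optimal constant, the upper bound $1+2^{s-1}G_{s,n}$ is what we just proved. For the lower bound, test with $h\equiv1$ and $u=0$. Then $\|h\|=1$ and the left side equals
\[
\sum_{k=0}^{R_n-1}|m_n(k/R_n)|^s=|m_n(0)|^s+G_{s,n}=1+G_{s,n},
\]
so the optimal constant is at least $1+G_{s,n}$.

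Almost every step here is a direct identification. The only point needing care is that the Gram matrix is circulant: each row is a permutation of the nonzero coefficients $m_n(k/R_n)$, so $\Gamma_s$ equals $G_{s,n}$ exactly rather than merely being bounded by it.
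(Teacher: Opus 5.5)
Your proof is correct and follows the paper's argument. You use the same vectors $\phi_k(b)=e^{2\pi i b(k+u)/R_n}$ in $L^2(w_n)$, discrete Parseval for the frame bound $C_n$, the circulant Gram identity giving $\Gamma_s=G_{s,n}$, and Lemma~\ref{lem:q41-frame}; for the lower bound, the paper's test vector $h=\phi_j$ yields $1+G_{s,n}$ at every fixed $u$, and your choice $h\equiv1$, $u=0$ is the case $j=0$, $u=0$, which is enough for the constant that is uniform in $u$.
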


\begin{proof}
Use $L^2(w_n)$ and
$\phi_k(b)={e}^{2\pi i b(k+u)/R_n}$. Discrete Parseval gives
the frame bound $C_n$, and
$
 |\langle\phi_k,\phi_l\rangle|
   =|m_n((k-l)/R_n)|.
$
Every off-diagonal row moment is exactly $G_{s,n}$. Lemma~\ref{lem:q41-frame}
gives the upper bound. Taking $h=\phi_j$ gives the stated lower
bound for the optimal constant.
\end{proof}

The two-sided comparison in Corollary~\ref{cor:q41-one-level} identifies
$G_{s,n}$ as the relevant one-level quantity, and the next theorem shows
that its summability precisely characterizes the global extension estimate.
\begin{theorem}\label{thm:exact-moment-criterion}
Assume
\begin{equation}\label{eq:q41-criterion-hyp}
 C_0:=\sup_n C_n=\sup_n R_n\max_b w_{n,b}<\infty.
\end{equation}
For every $2<s<\infty$, the following assertions are equivalent:
\begin{align}
 &\|\widehat{g\mu}\|_{L^s(\R)}\lesssim_{s,\mu}\|g\|_{L^2(\mu)}
        \quad\text{for all }g\in L^2(\mu);\label{eq:q41-crit-a}\\
 &\widehat\mu\in L^s(\R);\label{eq:q41-crit-b}\\
 &\sum_{n=1}^{\infty}G_{s,n}<\infty.\label{eq:q41-crit-c}
\end{align}
The implicit constant in \eqref{eq:q41-crit-a} may depend on $s$ and
$\mu$, but not on $g$. Thus, within this class, the test $g\equiv1$
detects the entire restriction range for $1<p<2$.
\end{theorem}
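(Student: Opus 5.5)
The plan is to prove the cycle \eqref{eq:q41-crit-a}$\Rightarrow$\eqref{eq:q41-crit-b}$\Rightarrow$\eqref{eq:q41-crit-c}$\Rightarrow$\eqref{eq:q41-crit-a}.

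\emph{The implication \eqref{eq:q41-crit-a}$\Rightarrow$\eqref{eq:q41-crit-b}.} This is immediate: take $g\equiv1$.

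\emph{The implication \eqref{eq:q41-crit-c}$\Rightarrow$\eqref{eq:q41-crit-a}.} I will feed one-level constants into Theorem~\ref{thm:frame-criterion}.
\begin{enumerate}[(1)]
\item Summability forces $G_{s,n}\to0$. Since $\eta_n^s\le G_{s,n}$, this gives $\eta_n\to0$, so Corollary~\ref{cor:q41-one-level} applies for $n\ge n_0$ with $K_n=1+2^{s-1}G_{s,n}$.
\item For the finitely many $n<n_0$ I use a crude bound. Write $x_k$ for the inner sums in \eqref{eq:level-iteration-assumption}. Cauchy--Schwarz gives $|x_k|\le\|h\|_{L^2(w_n)}$, and discrete Parseval gives $\sum_k|x_k|^2\le C_n\|h\|^2_{L^2(w_n)}$. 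Hence
\[
\sum_k|x_k|^s\le\max_k|x_k|^{s-2}\sum_k|x_k|^2\le C_0\|h\|^s_{L^2(w_n)},
\]
so $K_n=\max(C_0,1)$ works for these levels.
\item With these choices $\prod_nK_n\le \max(C_0,1)^{n_0}\exp\bigl(2^{s-1}\sum_nG_{s,n}\bigr)<\infty$, and Theorem~\ref{thm:frame-criterion} yields \eqref{eq:q41-crit-a}.
\end{enumerate}

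\emph{The implication \eqref{eq:q41-crit-b}$\Rightarrow$\eqref{eq:q41-crit-c}.} I will bound $\int|\widehat\mu|^s$ from below on disjoint resonant frequency grids.
\begin{enumerate}[(1)]
\item Fix $n$ and a small absolute $\delta\in(0,1/4)$. Represent each residue $k\not\equiv0 \pmod{R_n}$ by $k'\in(-R_n/2,R_n/2]$, $k'\ne0$, and consider $\xi=k'Q_{n-1}+t$ with $t\in I_{k'}$. Here $I_{k'}=[-\delta,\delta]$, except $I_{k'}=[-\delta,0]$ when $k'=R_n/2$.
\item Factor $\widehat\mu(\xi)=\widehat{\mu_{\le n-1}}(\xi)\,m_n(\xi/Q_n)\,\widehat{\nu_n}(\xi/Q_n)$.
\item By $Q_{n-1}$-periodicity, $\widehat{\mu_{\le n-1}}(\xi)=\widehat{\mu_{\le n-1}}(t)$. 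This has modulus at least $\cos(\pi\delta)$, by the centering bound for measures on $[0,1]$ from the proof of Lemma~\ref{lem:tail}.
\item The argument $\xi/Q_n=k'/R_n+t/Q_n$ lies in $[-1/2,1/2]$ by the choice of $I_{k'}$. Lemma~\ref{lem:tail} then gives $|\widehat{\nu_n}(\xi/Q_n)|\ge c(C_0)$.
\item It follows that
\[
\int_{\text{level }n}|\widehat\mu|^s\gtrsim \sum_{k=1}^{R_n-1}\int_{I_{k'}}\bigl|m_n(k/R_n+t/Q_n)\bigr|^s\,\dd t .
\]
The frequency windows for distinct $n$ lie in $\{Q_{n-1}/2\le|\xi|\le Q_n/2+1\}$, up to $O(1)$. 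Hence they overlap with bounded multiplicity, and summing over $n$ gives a quantity bounded by $2\|\widehat\mu\|_s^s$.
\end{enumerate}

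\emph{Main obstacle: the perturbation in step (5).} We must replace $m_n(k/R_n+\varepsilon)$, with $|\varepsilon|\le\delta/Q_n$, by $m_n(k/R_n)$. A pointwise Lipschitz bound is useless, since the individual values $|m_n(k/R_n)|$ may be tiny and $R_n$ may be huge. Instead I will control the error vector $\Delta_k=m_n(k/R_n+\varepsilon)-m_n(k/R_n)$ in $\ell^2$ via discrete Parseval:
\[
\sum_{k=0}^{R_n-1}|\Delta_k|^2=R_n\sum_b w_{n,b}^2\bigl|e^{-2\pi i b\varepsilon}-1\bigr|^2\le C_0\,(2\pi\delta R_n/Q_n)^2=C_0(2\pi\delta/Q_{n-1})^2 .
\]
Since $\ell^s\subset\ell^2$, this gives $\sum_k|\Delta_k|^s\le C_0^{s/2}(2\pi\delta/Q_{n-1})^s$. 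Combining with $|a|^s\ge2^{1-s}|a+\Delta|^s-|\Delta|^s$, the level-$n$ integral is at least $c\,G_{s,n}-C\,Q_{n-1}^{-s}$ for constants $c,C>0$ depending only on $s,C_0,\delta$. Because $Q_{n-1}\ge2^{n-1}$, the error terms are summable. Therefore $\sum_nG_{s,n}\lesssim\|\widehat\mu\|_s^s+O(1)<\infty$, which closes the cycle and proves the theorem.
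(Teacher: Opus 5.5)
Your proposal is correct. The implications \eqref{eq:q41-crit-a}$\Rightarrow$\eqref{eq:q41-crit-b} and \eqref{eq:q41-crit-c}$\Rightarrow$\eqref{eq:q41-crit-a} coincide with the paper's argument: Corollary~\ref{cor:q41-one-level} for large $n$, the Cauchy--Schwarz/Parseval constant $C_n$ at the finitely many initial levels, then Theorem~\ref{thm:frame-criterion}.

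For \eqref{eq:q41-crit-b}$\Rightarrow$\eqref{eq:q41-crit-c} you take a genuinely different route.

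\emph{The paper's route.} It never integrates near the resonances. It first proves the sampling inequality \eqref{eq:q41-sampling}, $\sum_{j\in\Z}|\widehat\mu(j)|^s\lesssim\|\widehat\mu\|_s^s$, by writing $\widehat\mu=\widehat\mu*\widehat\chi$ with $\chi$ a smooth cutoff equal to one on $[0,1]$. It then evaluates $\widehat\mu$ exactly at the integers $kQ_{n-1}$, $1\le k\le R_n/2$. There all earlier masks equal one, so $|\widehat\mu(kQ_{n-1})|\ge c(C_0)|m_n(k/R_n)|$ with no error term. Conjugate symmetry and disjointness of these integer grids finish the proof.

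\emph{Your route.} You integrate $|\widehat\mu|^s$ over $\delta$-windows around the same points. This forces you to control two perturbations. The prefix factor you handle with the centering bound $\cos(\pi\delta)$. The shift of the mask you handle correctly by applying Parseval to the difference vector, which gives $\sum_k|\Delta_k|^s\le C_0^{s/2}(2\pi\delta/Q_{n-1})^s$ independently of $R_n$. Your observation that a pointwise Lipschitz bound would lose a factor $R_n$ is exactly the point.

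\emph{What each buys.} Your argument is self-contained and avoids the smoothing step. However, it yields only the inhomogeneous bound $\sum_nG_{s,n}\lesssim\|\widehat\mu\|_s^s+O(1)$, and it uses the frame bound a second time, inside the perturbation estimate. The paper's sampling route gives the homogeneous inequality $\sum_nG_{s,n}\lesssim_{s,C_0}\sum_{j}|\widehat\mu(j)|^s$ with no off-grid analysis. Because it uses $\sup_nC_n<\infty$ only through the tail lemma, it also transfers verbatim to Proposition~\ref{prop:Ls-resonant-obstruction}, where only $\varepsilon_n\to0$ is assumed. In that setting your error term would be of size $C_{2,n}^{s/2}Q_{n-1}^{-s}$, which need not be summable when $C_{2,n}=R_n\sum_bw_{n,b}^2$ is unbounded.
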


\begin{proof}
The implication \eqref{eq:q41-crit-a}$\Rightarrow$\eqref{eq:q41-crit-b} is
immediate. To prove sufficiency of \eqref{eq:q41-crit-c}, note that
$\eta_n^s\le G_{s,n}\to0$, so $\eta_n\to0$.
By Corollary~\ref{cor:q41-one-level}, the
one-level constants can be chosen as
$
 K_n=1+2^{s-1}G_{s,n}
$
for all sufficiently large $n$. 
For the finitely many remaining levels, Cauchy--Schwarz and discrete
Parseval give \eqref{eq:level-iteration-assumption} with $K_n=C_n$:
each Fourier coefficient has modulus at most $\|h\|_{L^2(w_n)}$, and
the sum of its squared moduli is at most $C_n\|h\|_{L^2(w_n)}^2$.
Thus $\prod_nK_n<\infty$, and Theorem~\ref{thm:frame-criterion}
proves \eqref{eq:q41-crit-a}.

It remains to prove necessity of \eqref{eq:q41-crit-c} from
\eqref{eq:q41-crit-b}. Since $\supp\mu\subset[0,1]$, choose a smooth
compactly supported $\chi$ equal to one on $[0,1]$, and put
$\psi=\widehat\chi$. Then $\widehat\mu=\widehat\mu*\psi$.
H\"older's inequality and the rapid decay of $\psi$ give
\begin{equation}\label{eq:q41-sampling}
 \sum_{j\in\Z}|\widehat\mu(j)|^s
 \le\|\psi\|_1^{s-1}
 \sup_t\sum_{j\in\Z}|\psi(j-t)|\,
 \|\widehat\mu\|_s^s<\infty.
\end{equation}
Lemma~\ref{lem:bounded-frame-tail} gives a constant $c=c(C_0)>0$
such that $|\widehat\nu_n(t)|\ge c$ for $|t|\le1/2$ and every $n$.
For integers $k$ satisfying $1\le|k|\le R_n/2$, all preceding
masks at $kQ_{n-1}$ are one. Therefore
$
 |\widehat\mu(kQ_{n-1})|
 =|m_n(k/R_n)|\,|\widehat\nu_n(k/R_n)|
 \ge c|m_n(k/R_n)|.
$
By conjugate symmetry,
$
 \left|m_n\!\left(\frac{R_n-k}{R_n}\right)\right|
 =\left|m_n\!\left(\frac{k}{R_n}\right)\right|,
$
so
\[
 G_{s,n}
 \le 2\sum_{1\le k\le\lfloor R_n/2\rfloor}
 \left|m_n\!\left(\frac{k}{R_n}\right)\right|^s,
\]
where the residue $R_n/2$ is only harmlessly overcounted when $R_n$ is even.
The preceding lower bound therefore gives
\begin{equation}\label{eq:q41-resonant-grid-bound}
 G_{s,n}
 \lesssim_{s,C_0}
 \sum_{1\le k\le\lfloor R_n/2\rfloor}
 |\widehat\mu(kQ_{n-1})|^s.
\end{equation}
The frequency sets on the right are pairwise disjoint as $n$ varies, since
they lie in $[Q_{n-1},Q_n/2]$. Summing
\eqref{eq:q41-resonant-grid-bound} in $n$ and using
\eqref{eq:q41-sampling} proves
$
 \sum_{n\ge1}G_{s,n}
 \lesssim_{s,C_0}
 \sum_{j\in\Z}|\widehat\mu(j)|^s<\infty.
$
\end{proof}

\begin{corollary}\label{cor:critical-index}
Under the assumptions of Theorem~\ref{thm:exact-moment-criterion}, define
$
 s_c(\mu):=\inf\big\{s>2:\sum_{n=1}^{\infty}G_{s,n}<\infty\big\}
 \in[2,\infty],
$
where $\inf\varnothing=\infty$.
Then
\begin{equation}\label{eq:critical-index-formula}
 p_{\rm res}(\mu)=\frac{s_c(\mu)}{s_c(\mu)-1},
\end{equation}
with the convention that the right-hand side equals $1$ when $s_c(\mu)=\infty$.  If $2<s_c(\mu)<\infty$, restriction holds at
$p=s_c/(s_c-1)$ if and only if
$\sum_nG_{s_c,n}<\infty$.
\end{corollary}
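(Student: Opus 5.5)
Corollary~\ref{cor:critical-index} follows from Theorem~\ref{thm:exact-moment-criterion} together with duality and monotonicity in $s$. We work with $S:=\{s>2:\sum_nG_{s,n}<\infty\}$, so that $s_c(\mu)=\inf S$.

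\emph{Step 1: $S$ is an up-set.} Since $|m_n|\le1$, we have $|m_n(k/R_n)|^{s'}\le|m_n(k/R_n)|^{s}$ whenever $s'\ge s$. Hence $G_{s',n}\le G_{s,n}$, and $s\in S$ implies $s'\in S$. So $S$ is one of $\varnothing$, $(s_c,\infty)$ or $[s_c,\infty)$.

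\emph{Step 2: translate restriction into membership of $S$.} For $1<p<2$, duality identifies restriction at $p$ with $L^2(\mu)\to L^{p'}$ extension, where $p'\in(2,\infty)$. Theorem~\ref{thm:exact-moment-criterion} says this extension holds if and only if $p'\in S$. The map $p\mapsto p'=p/(p-1)$ is decreasing, so the set of $p\in(1,2)$ at which restriction holds is the image of $S$ under $s\mapsto s/(s-1)$. The endpoints need separate remarks.
\begin{itemize}
\item At $p=1$ restriction always holds, since $\|\widehat f\|_{L^2(\mu)}\le\|\widehat f\|_\infty\le\|f\|_1$ and $\mu$ is a probability measure. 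This handles $s_c=\infty$: then $S=\varnothing$, no $p\in(1,2)$ works, and $p_{\rm res}=1$.
\item At $p=2$ we never need to decide whether restriction holds. The supremum in \eqref{CRE} is determined by the $p<2$ values unless $S=\varnothing$, and that case was just settled.
\end{itemize}

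\emph{Step 3: identify the supremum.}
\begin{itemize}
\item If $2<s_c<\infty$, the admissible $p$ in $(1,2)$ form $(1,s_c/(s_c-1))$, possibly with its right endpoint included. Hence $p_{\rm res}=s_c/(s_c-1)$.
\item If $s_c=2$, then $S\supseteq(2,\infty)$. Restriction holds at every $p<2$, so $p_{\rm res}=2=s_c/(s_c-1)$.
\end{itemize}
Finally, for $2<s_c<\infty$, restriction at $p=s_c/(s_c-1)\in(1,2)$ is equivalent, by Theorem~\ref{thm:exact-moment-criterion} applied with $s=s_c$, to $\sum_nG_{s_c,n}<\infty$.

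There is no real obstacle here. The only points needing care are the monotonicity in Step 1, which uses $|m_n|\le1$, and the bookkeeping of the degenerate cases $s_c\in\{2,\infty\}$ at the endpoints $p=1,2$.
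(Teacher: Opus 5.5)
Your route is the paper's: $|m_n|\le1$ makes $G_{s,n}$ decrease in $s$, so the convergence set is an upper interval, and then you apply Theorem~\ref{thm:exact-moment-criterion} and duality. Your treatment of $p=1$, of $s_c=2$, and of the endpoint equivalence at $s=s_c$ is fine.

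One step needs repair: the claim that ``at $p=2$ we never need to decide whether restriction holds'' is not right as stated. When $2<s_c\le\infty$, the admissible $p\in[1,2)$ have supremum $s_c/(s_c-1)<2$, read as $1$ when $s_c=\infty$. If restriction held at $p=2$, the supremum in \eqref{CRE} would be $2$ instead. So in both the case $S=\varnothing$ and the case $2<s_c<\infty$ you must actually exclude $p=2$.

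The fix is one line. Restriction at $p=2$ is $L^2(\mu)\to L^2(\R)$ extension, and testing with $g\equiv1$ gives $\widehat\mu\in L^2(\R)$. Since $|\widehat\mu|\le1$, this gives $\widehat\mu\in L^s(\R)$ for every $s>2$. The implication \eqref{eq:q41-crit-b}$\Rightarrow$\eqref{eq:q41-crit-c} then yields $S=(2,\infty)$, i.e.\ $s_c=2$.

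Alternatively, Riesz--Thorin interpolation between $p=1$ and $p=2$ shows that the set of admissible $p$ is an interval containing $1$. The paper's terse proof uses this fact implicitly. With either addition your proof is complete.
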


\begin{proof}
Since $|m_n(k/R_n)|\le1$, the quantities $G_{s,n}$ decrease with $s$.
Hence the set of $s>2$ for which $\sum_nG_{s,n}<\infty$ is an upper
interval, possibly open at its left endpoint.  By
Theorem~\ref{thm:exact-moment-criterion}, extension holds at exponent $s$
exactly on this interval.  Duality, with $p=s'=s/(s-1)$, gives
\eqref{eq:critical-index-formula}.  When $2<s_c<\infty$, the same theorem
shows that the critical endpoint is attained exactly when the series at
$s=s_c$ converges.
\end{proof}

\subsection{The uniform one-level bound and the endpoint \(s=2\)}

The uniform frame bound in Theorem~\ref{thm:exact-moment-criterion}
cannot simply be omitted, even when both scalar conditions hold.
\begin{example}\label{ex:unbounded-frame-counterexample}
Let $R_n=2^n$, $\alpha_n=R_n^{-3/8}$, and set
\[
 w_{n,0}=\alpha_n+\frac{1-\alpha_n}{R_n},\qquad
 w_{n,j}=\frac{1-\alpha_n}{R_n}\quad(1\le j<R_n).
\]
The resulting weighted Moran measure satisfies
$\widehat\mu\in L^4(\R)$ and $\sum_nG_{4,n}<\infty$, but it has no
$L^2(\mu)\to L^4(\R)$ extension estimate.
\end{example}
\begin{proof}
At every nonzero grid frequency $m_n(k/R_n)=\alpha_n$, so
\[
 G_{4,n}=(R_n-1)\alpha_n^4\asymp R_n^{-1/2},\qquad
 C_n=1-\alpha_n+R_n\alpha_n\asymp R_n^{5/8}\to\infty.
\]
Let $f_N$ be the density obtained from the first $N$ digit laws by
replacing the remaining tail with the uniform law on $[0,Q_N^{-1}]$.
These densities form a nonnegative martingale on the nested cylinder
partitions. With $q=4/3$, their moments satisfy
\[
 \|f_N\|_q^q=\prod_{n\le N}R_n^{q-1}\sum_jw_{n,j}^q,
 \qquad
 R_n^{q-1}\sum_jw_{n,j}^q\le1+C R_n^{-1/6}.
\]
Indeed, the digits $j\ne0$ contribute at most $1-R_n^{-1}$, and
$(\alpha_n+R_n^{-1})^q\le2^{q-1}(\alpha_n^q+R_n^{-q})$
bounds the remaining term. Hence $(f_N)$ is bounded in $L^{4/3}$ and
converges to an $L^{4/3}$ density of $\mu$. Hausdorff--Young gives
$\widehat\mu\in L^4$.

On the other hand,
$
 \mu([0,Q_N^{-1}))=\prod_{n\le N}w_{n,0}\ge Q_N^{-3/8}.
$
Extension at $s=4$, or its dual restriction estimate at $p=4/3$,
would imply $\mu(I)\lesssim |I|^{1/2}$ for every interval $I$.
This follows by testing restriction with a rescaled smooth function whose
Fourier transform is at least one on $I$. The displayed cylinder masses
contradict that bound as $N\to\infty$.
\end{proof}

At $s=2$, scalar Fourier integrability and the full extension estimate
are governed by different products. This explains the strict inequality
$s>2$ in the exact moment criterion.
\begin{proposition}\label{prop:quadratic-endpoint}
For an arbitrary weighted Moran measure, put
$G_{2,n}=R_n\sum_jw_{n,j}^2-1$. Then
\begin{equation}
 \widehat\mu\in L^2(\R)
 \quad\Longleftrightarrow\quad
 \prod_n(1+G_{2,n})<\infty
 \quad\Longleftrightarrow\quad\sum_nG_{2,n}<\infty,
 \label{eq:scalar-quadratic-endpoint},
 \end{equation}
 \begin{equation}
 L^2(\mu)\to L^2(\R)\text{ extension holds}
 \quad\Longleftrightarrow\quad\prod_n C_n<\infty.
 \label{eq:operator-quadratic-endpoint}
\end{equation}
In the second case, the square of the optimal extension constant is
$\|\frac{d\mu}{dx}\|_\infty=\prod_nC_n$.
\end{proposition}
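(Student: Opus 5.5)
The plan is to reduce both equivalences to statements about the densities
\[
f_N=\prod_{n\le N}R_n\,w_{n,j_n}\quad\text{on the cylinder }I_{j_1\ldots j_N}=\Bigl[\sum_{n\le N}\tfrac{j_n}{Q_n},\ \sum_{n\le N}\tfrac{j_n}{Q_n}+Q_N^{-1}\Bigr).
\]
These are the same densities used in Example~\ref{ex:unbounded-frame-counterexample}: the first $N$ digit laws are kept and the tail is replaced by the uniform law on $[0,Q_N^{-1}]$. They form a nonnegative martingale with respect to the $Q_N$-adic filtration $\mathcal F_N$ on $[0,1]$. A direct computation over the product structure gives
\[
\|f_N\|_2^2=\prod_{n\le N}R_n\sum_jw_{n,j}^2=\prod_{n\le N}(1+G_{2,n}),
\qquad
\|f_N\|_\infty=\prod_{n\le N}C_n .
\]
For the supremum, choose at each level a digit maximizing $w_{n,j}$. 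By Cauchy--Schwarz, $R_n\sum_jw_{n,j}^2\ge1$ and $C_n\ge1$. Hence $G_{2,n}\ge0$, both products are nondecreasing in $N$, and $\prod_n(1+G_{2,n})<\infty\iff\sum_nG_{2,n}<\infty$.

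For \eqref{eq:scalar-quadratic-endpoint}, I will use Plancherel: $\widehat\mu\in L^2$ if and only if $\mu=f\,dx$ with $f\in L^2$.
\begin{itemize}
\item If the product is finite, $(f_N)$ is an $L^2$-bounded martingale. It converges in $L^2$ to some $f$. Since $f_N\,dx\Rightarrow\mu$ (the laws differ only in a tail of size $Q_N^{-1}$), we get $\mu=f\,dx$.
\item Conversely, suppose $\mu=f\,dx$. Then $\mu$ gives no mass to the countably many cylinder endpoints. The $\mu$-mass of each half-open cylinder is therefore exactly the product of the digit weights, even though digit expansions may be non-unique. This identifies $f_N=\mathbb E(f\mid\mathcal F_N)$, so $\|f_N\|_2\le\|f\|_2$ and the product is bounded.
\end{itemize}
This identification is the one delicate point. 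I expect it to be the main obstacle, and absolute continuity is exactly what resolves it.

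For \eqref{eq:operator-quadratic-endpoint}, Plancherel gives $\|\widehat{g\mu}\|_2^2=\|g\mu\|_{L^2(dx)}^2$.
\begin{itemize}
\item Suppose $\mu=f\,dx$ with $f\in L^\infty$. Then $\int|g|^2f^2\le\|f\|_\infty\int|g|^2f$, so the squared norm is at most $\|f\|_\infty$. Testing with $g=\one_E$ for $E=\{f>\|f\|_\infty-\epsilon\}$ shows that this value is optimal.
\item Conversely, suppose extension holds with constant $K$. Taking $g\equiv1$ shows $\mu=f\,dx$ with $f\in L^2$. Taking $g=\one_E$ gives $\int_Ef^2\le K^2\int_Ef$ for every Borel set $E$, which forces $f\le K^2$ a.e. Thus the optimal squared constant equals $\|f\|_\infty$.
\end{itemize}

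It remains to show that $\mu$ has a bounded density if and only if $\prod_nC_n<\infty$, with $\|f\|_\infty=\prod_nC_n$.
\begin{itemize}
\item If $\mu=f\,dx$ with $f$ bounded, then $f_N=\mathbb E(f\mid\mathcal F_N)$ by the previous step. This gives $\prod_{n\le N}C_n=\|f_N\|_\infty\le\|f\|_\infty$. Martingale a.e. convergence $f_N\to f$ then gives $\|f\|_\infty\le\sup_N\|f_N\|_\infty$.
\item Conversely, if $\prod_nC_n<\infty$, the $f_N$ are uniformly bounded by $\prod_nC_n$. A weak-$*$ limit is a density of $\mu$ with the same bound.
\end{itemize}
Combining both directions gives $\|d\mu/dx\|_\infty=\prod_nC_n$.
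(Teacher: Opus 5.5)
Your proposal is correct and follows essentially the same route as the paper: the uniform-tail martingale $f_N$ with $\|f_N\|_2^2=\prod_{n\le N}(1+G_{2,n})$ and $\|f_N\|_\infty=\prod_{n\le N}C_n$, Plancherel, and the identification $f_N=\mathbb E(f\mid\mathcal F_N)$ when $\mu=f\,dx$, which you justify more carefully than the paper through the null cylinder endpoints. The only small variation is that you derive a bounded density from the extension estimate by testing $g=\one_E$ rather than the paper's smooth interval tests, and this has the advantage of yielding the sharp bound $f\le K^2$, and hence the optimal constant, directly.
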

\begin{proof}
For the uniform-tail densities $f_N$ used above,
\[
 \|f_N\|_2^2=\prod_{n\le N}(1+G_{2,n}),\qquad
 \|f_N\|_\infty=\prod_{n\le N}C_n.
\]
Boundedness of the first product gives an $L^2$ martingale limit;
boundedness of the second gives a bounded density. Conversely, if
$\mu=f\,dx$, then $f_N$ is the conditional expectation of $f$ on the
level-$N$ partition. Conditional expectation is contractive in $L^2$
and $L^\infty$, and $f_N\to f$ almost everywhere, giving the converse
implications and the asserted norm identities. Plancherel identifies
$\widehat\mu\in L^2$ with an $L^2$ density. The full $L^2$ extension
estimate is equivalent to a bounded density: for such a density,
$\|\widehat{g\mu}\|_2^2=\int|g|^2f^2\,dx$, whereas the dual estimate
forces $\mu(I)\lesssim |I|$ by smooth interval tests.
Finally, for nonnegative $G_{2,n}$, convergence of the product is
equivalent to convergence of their sum.
\end{proof}

For example, take $R_n=2$ and
$w_{n,0}=(1+a_n)/2$, $w_{n,1}=(1-a_n)/2$, where $a_n=1/(n+1)$.
Then $\sup_nC_n\le3/2$ and $\sum_nG_{2,n}=\sum_na_n^2<\infty$, but
\[
 \prod_{n=1}^NC_n=\prod_{n=1}^N(1+a_n)=\frac{N+2}{2}\to\infty.
\]
Thus even under bounded one-level frame bounds the scalar test does not
characterize extension at the quadratic endpoint.

\section{Same Fourier bounds, different restriction exponents}\label{sec:fixed-envelope}

We apply the moment criterion while keeping the scales, digit cardinalities,
maximal nonzero mask coefficients and an exponential spectrum fixed. The resulting
measures also have a common Fourier upper bound that is sharp in order
along the same resonant sequence.

\subsection{Dimension and spectral notation}\label{sec:dimension-notation}

For a Borel probability measure $\mu$ on
$\R$, its \emph{Fourier dimension} is
\[
\text{$\dimF\mu=\sup\{0\le t\le1:|\widehat\mu(\xi)|\lesssim_t(1+|\xi|)^{-t/2}\},$ ~where ~$\widehat\mu(\xi)=\int e^{-2\pi i\xi x}\dd\mu(x)$,}
\]
while its \emph{correlation dimension} is
\[
\text{$\dimtwo\mu=\liminf_{r\downarrow0}\frac{\log\Corr_\mu(r)}{\log r},$ ~where ~
$
\Corr_\mu(r)=\iint\one_{\{|x-y|\le r\}}\dd\mu(x)\dd\mu(y).$}
\]
We write $\dimH\mu=\inf\{\dimH E:\mu(E)=1\}$ and use natural logarithms.
We use $-\log0=+\infty$ throughout. The standard energy comparison gives
$\dimF\mu\le\dimtwo\mu\le\dimH\mu$; see \cite{Mattila}.

To keep track of the mass distribution at all scales, we use the following
fixed-grid formulation of the \emph{lower $L^q$-dimension}. For $r>0$, let
$
\mathcal D_r=\{[kr,(k+1)r):k\in\Z\}
$
and for $1<q<\infty$ set
\[
\text{$D_q(\mu)
=
\liminf_{r\downarrow0}
\frac{-\log\mathcal M_q(\mu,r)}
{(q-1)\log(1/r)}$ \quad where ~~$\mathcal M_q(\mu,r)
=
\sum_{I\in\mathcal D_r}\mu(I)^q$.}
\]
We remark that replacing $\mathcal D_r$ by any interval partition whose mesh size is comparable to $r$ changes $\mathcal M_q(\mu,r)$ only by a multiplicative constant independent of $r$, and hence does not affect $D_q(\mu)$. Thus this is the usual lower $L^q$-dimension; in particular, $D_2(\mu)=\dimtwo\mu$, the correlation dimension, see, for example, \cite{MattilaMoranRey}. We also use the \emph{Frostman dimension}
\[
D_\infty(\mu)=\sup\left\{a\ge0:\sup_x\mu(B(x,r))\lesssim_a r^a
\text{ for }0<r<1\right\}.
\]
Moreover, $D_q(\mu)\downarrow D_\infty(\mu)$ as $q\to\infty$, see, for example, \cite{FanLauRao}. For general background on \(L^q\)-dimensions and multifractal dimensions of measures, see also \cite{Olsen1995}.

The Fourier spectrum was introduced by Fraser \cite{FraserSpectrum} as a
continuous family of Fourier-analytic dimensions interpolating between
uniform Fourier decay and Sobolev regularity. Unlike the ordinary Fourier
dimension, which records a pointwise power-decay exponent of $\widehat\mu$,
the Fourier spectrum also measures the integrability of $\widehat\mu$ and
can therefore retain information that is invisible to the Fourier dimension
alone. This additional information is particularly relevant to Fourier
restriction, where it yields a continuous family of Stein--Tomas type estimates; see
\cite{CFdORestriction}.

For a finite Borel measure $\mu$ on $\mathbb R$ and $0<\theta\le1$, set
\[
\mathcal J_{t,\theta}(\mu)
:=
\left(
\int_{|\xi|\ge1}
|\widehat\mu(\xi)|^{2/\theta}
|\xi|^{t/\theta-1}\dd\xi
\right)^{\theta},
\]
and define the endpoint quantity
$
\mathcal J_{t,0}(\mu)
:=
\sup_{|\xi|\ge1}
|\widehat\mu(\xi)|^2|\xi|^t.
$
Following Fraser \cite{FraserSpectrum}, the \emph{Fourier spectrum} of $\mu$ is the function
$\theta\mapsto\dim_{\mathrm F}^{\theta}\mu$, $0\le\theta\le1$, where
\begin{equation}\label{eq:Fourier-spectrum-definition}
\dim_{\mathrm F}^{\theta}\mu
:=
\sup\{t\ge0:\mathcal J_{t,\theta}(\mu)<\infty\}.
\end{equation}
Thus the endpoint $\theta=0$ recovers the usual Fourier dimension after truncation at the ambient dimension:
$
\dimF\mu=\min\{1,\dim_{\mathrm F}^{0}\mu\},
$
whereas $\dim_{\mathrm F}^{1}\mu=\dimS\mu$ is the Sobolev dimension, see \cite{Mattila}.
For \(0<\theta<1\), the Fourier spectrum interpolates between pointwise Fourier decay and Sobolev regularity through weighted \(L^{2/\theta}\)-integrability of the Fourier transform.

A Borel probability measure $\mu$ is \emph{spectral} if there is a
countable set $\Lambda\subset\R$ such that
$\{e^{2\pi i\lambda x}:\lambda\in\Lambda\}$ is an orthonormal basis for
$L^2(\mu)$. We call such a set $\Lambda$ an \emph{exponential spectrum} of $\mu$ (or simply a \emph{spectrum} when no confusion can arise).
Hadamard triples and uniform bounds on normalized tails provide the
spectrality criterion used below; see \cite{JorgensenPedersen,LaiWang}.

For a finite digit set $B\subset\{0,\ldots,R-1\}$, we say that $B$ is \emph{Hadamard} (for the base $R$) if there exists a finite set $L\subset\Z$ with $|L|=|B|$ such that
\[
\frac{1}{\sqrt{|B|}}\bigl(e^{2\pi i b\ell/R}\bigr)_{b\in B,\,\ell\in L}
\]
is unitary; equivalently, $(R,B,L)$ is a Hadamard triple.

For the measures constructed below, $D_\infty=1$ implies $D_q=1$ for
all $q>1$, while Fourier dimension zero and Sobolev dimension one imply
$\dim_{\mathrm F}^{\theta}\mu=\theta$ by
\cite[Corollary~1.6]{FraserSpectrum}. In this boundary case the dimension
function does not say whether the defining integral is finite at its
critical exponent. Main Theorem A identifies that information exactly:
\begin{equation}\label{eq:critical-line-moments}
 \mathcal J_{\theta,\theta}(\mu)<\infty
 \quad\Longleftrightarrow\quad
 \sum_nG_{2/\theta,n}<\infty,
 \qquad 0<\theta<1,
\end{equation}
whenever $\sup_n C_n<\infty$. Indeed, the integral in
$\mathcal J_{\theta,\theta}$ is the $L^{2/\theta}$ integral of
$\widehat\mu$ outside $[-1,1]$.

\subsection{Common scales, maxima and  exponentialspectrum}

Theorem~\ref{thm:exact-moment-criterion} gives the following equivalence
under the assumption $\sup_n C_n<\infty$:
\begin{equation}\label{eq:fixenv-criterion}
 L^2(\mu)\rightarrow L^s(\R)\text{ extension holds}
 \quad\Longleftrightarrow\quad \sum_nG_{s,n}<\infty,
 \qquad s>2.
\end{equation}
Thus $p_{\rm res}=s_c/(s_c-1)$, where
$s_c=\inf\{s>2:\sum_nG_{s,n}<\infty\}$, with the conventions of
Corollary~\ref{cor:critical-index}. We keep the maxima $\eta_n$ fixed
while varying the distribution of the remaining coefficients.

\begin{theorem}\label{thm:fixed-envelope}
There are fixed integer sequences $R_n=2M_n$, with $M_n$ odd, and a
fixed positive sequence $\eta_n\asymp n^{-1/2}$, with the following
property. For every $p_0\in[4/3,2]$ there are digit sets
$B_n^{(p_0)}\subset\{0,\ldots,R_n-1\}$ of cardinality $M_n$ such that
\[
 \mu_{p_0}=\mathop{*}_{n=1}^\infty
 \left(\frac1{M_n}\sum_{b\in B_n^{(p_0)}}\delta_{b/Q_n}\right)
\]
is singular continuous and spectral, and $p_{\rm res}(\mu_{p_0})=p_0$.
More precisely, the following assertions hold simultaneously.
\begin{enumerate}
\item [\textup{(i)}] All these measures have the same  exponential spectrum $\Lambda$. Their
one-level frame bounds are $C_n=2$, and their maxima are exactly equal:
\begin{equation}\label{eq:fixenv-same-max}
 \max_{1\le k<R_n}|m_n^{(p_0)}(k/R_n)|=
 m_n^{(p_0)}(1/2)=\eta_n
 \quad(n\ge1).
\end{equation}
In addition, $G_{2,n}=1$ for every member of the family.
\item [\textup{(ii)}] A constant independent of $p_0$ gives the common envelope
\begin{equation}\label{eq:fixenv-envelope}
 |\widehat\mu_{p_0}(\xi)|\le
 C\frac{\log\log(e^e+|\xi|)}{\sqrt{\log(e+|\xi|)}}.
\end{equation}
At the same frequencies $\xi_n=Q_n/2$, every member satisfies
\begin{equation}\label{eq:fixenv-resonance}
 |\widehat\mu_{p_0}(\xi_n)|
 =\left(\frac2\pi+o(1)\right)\eta_n
 \asymp\frac{\log\log\xi_n}{\sqrt{\log\xi_n}}.
\end{equation}
The error term is uniform over the family.
\item [\textup{(iii)}] For every member,
\begin{equation}\label{eq:fixenv-dimensions}
 D_q(\mu_{p_0})=1\quad(1<q\le\infty),\qquad
 \dim_{\mathrm F}^{\theta}\mu_{p_0}=\theta\quad(0\le\theta\le1),
\end{equation}
and the pointwise local dimension is one at every point of its support.
\item [\textup{(iv)}] If $p_0<2$, restriction holds exactly for $1\le p\le p_0$.
For $p_0=2$, it holds exactly for $1\le p<2$.
\end{enumerate}
\end{theorem}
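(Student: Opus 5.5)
The construction will use digit sets of the form $B_n=\{0,\dots,R_n-1\}$ split into $M_n$ residue pairs, where each pair is $\{a, a+M_n\}$ shifted by an $\epsilon$-choice. With $R_n=2M_n$, any $B_n$ containing exactly one element from each residue class modulo $M_n$ gives $C_n=2$.

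Write $b=r+M_n\epsilon_r$ with $r\in\{0,\dots,M_n-1\}$ and $\epsilon_r\in\{0,1\}$. Then:
\begin{itemize}
\item for even $k=2l\neq0$ the mask satisfies $m_n(k/R_n)=\frac1{M_n}\sum_r e^{-2\pi i r l/M_n}=0$;
\item for odd $k$ one gets $m_n(k/R_n)=\frac1{M_n}\sum_r \sigma_r e^{-2\pi i rk/(2M_n)}$ with signs $\sigma_r=(-1)^{\epsilon_r}$.
\end{itemize}
So $m_n$ on the odd residues is essentially the discrete Fourier transform of the twisted sign sequence $\sigma_r e^{-\pi i r/M_n}$ on $\Z/M_n$. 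Parseval then gives $\sum_{k\ \rm odd}|m_n(k/R_n)|^2=1/M_n\cdot M_n\cdot\frac1{M_n}\cdot M_n=1$, which is the claim $G_{2,n}=1$ independently of the signs.

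The spectrum comes from the Hadamard triple $(R_n,B_n,L_n)$ with $L_n=\{0,2,4,\dots\}$ shifted appropriately, namely $L_n=2\{0,\dots,M_n-1\}$. Unitarity holds because $e^{2\pi i b\cdot 2l/R_n}=e^{2\pi i rl/M_n}$ does not depend on $\epsilon$. This gives a common $\Lambda=\sum_n Q_{n-1}L_n$. Spectrality (completeness) follows from the tail bound of Lemma~\ref{lem:bounded-frame-tail} with $C_0=2$, via the Jorgensen--Pedersen/Lai--Wang criterion cited earlier.

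The main design task, and the main obstacle, is choosing the sign sequences. The requirements are:
\begin{itemize}
\item $m_n(1/2)=\frac1{M_n}\sum_r\sigma_r(-1)^r\cdot$(phase), which at $k=M_n$ (odd, since $M_n$ is odd) equals $\frac1{M_n}\sum_r\sigma_r(-1)^r$, must equal the prescribed $\eta_n\asymp n^{-1/2}$ and must be the maximum;
\item $G_{s,n}$ must be tunable, so that $\sum_nG_{s,n}$ converges exactly for $s>s_c$ with $s_c=p_0/(p_0-1)\in[2,4]$, with a convergence-or-divergence choice at the endpoint.
\end{itemize}

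I would take $\sigma$ to be a mixture of two pieces on a block structure. The first is a bias piece producing the resonance at $k=M_n$ of size $\eta_n$. The second is a spread piece placing the remaining $\ell^2$-mass $1-\eta_n^2$ on the remaining odd frequencies with a controlled profile. The two extremes are:
\begin{itemize}
\item $L$ coefficients of size $\approx\eta_n$ (roughly $\eta_n^{-2}\asymp n$ of them), giving $G_{s,n}\asymp \eta_n^{s-2}\asymp n^{1-s/2}$ and hence $s_c=4$;
\item coefficients all of size $\approx M_n^{-1/2}$ besides the peak, giving $G_{s,n}\approx\eta_n^s+M_n^{1-s/2}$, summable for every $s>2$ once $M_n$ grows fast, so $s_c=2$.
\end{itemize}
To interpolate I would put about $N_n$ coefficients of size $N_n^{-1/2}$ with $\eta_n^{-2}\le N_n\le M_n$, for instance $N_n\asymp n^{\beta}$, $\beta\ge1$. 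This gives $G_{s,n}\asymp n^{-s/2}+n^{\beta(1-s/2)}$, hence $s_c=2+2/\beta$, which covers $[2,4]$, i.e.\ $p_0\in[4/3,2]$. Logarithmic corrections to $N_n$, for instance $N_n\asymp n^\beta(\log n)^{\gamma}$, control the endpoint and produce statement (iv) via Corollary~\ref{cor:critical-index}.

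Realizing these profiles with $\pm1$ signs is the hard part. I would use finite-field or Legendre-type blocks: a sign sequence that is a concatenation of $N_n$ equal-length chunks, with flat quadratic-character sums inside each chunk. This makes $|\widehat\sigma|$ approximately flat on about $N_n$ frequencies. The peak at $k=M_n$ would be inserted by flipping a proportion $\eta_n$ of signs in a way that keeps every other coefficient at most $\eta_n$. Exact equality of the peak across the family is arranged by fixing the count $\sum_r\sigma_r(-1)^r$ and verifying the maximum via explicit Gauss-sum bounds.

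The remaining claims are then routine:
\begin{itemize}
\item \textbf{Envelope (ii).} Factor $\widehat\mu(\xi)=\prod_n m_n(\xi/Q_n)$. Only the level where $\xi/Q_{n-1}$ sits contributes an $\eta_n$-size factor, up to $\log$-level losses from neighbouring levels; these must be handled with the choice $M_n$ growing so that $\log Q_n\asymp n\log n$, which yields the $\log\log$ factor. The resonance at $\xi_n=Q_n/2$ uses $\widehat\nu_n(1/2)\to 2/\pi$ as in Lemma~\ref{lem:bounded-frame-tail}.
\item \textbf{Singularity.} This follows from Proposition~\ref{prop:quadratic-endpoint}, since $\prod(1+G_{2,n})=\infty$.
\item \textbf{Continuity.} This follows from $\max_j w_{n,j}\to0$.
\item \textbf{Dimensions (iii).} Cylinder masses are $Q_n^{-1}2^n$ and $\log 2^n=o(\log Q_n)$, giving local dimension one and $D_\infty=1$. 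Fourier dimension zero comes from the resonance, and then Fraser's Corollary~1.6 gives $\dim_{\mathrm F}^\theta=\theta$.
\end{itemize}
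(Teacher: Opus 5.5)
The gap is the sign-vector construction. You flag it as the hard part but do not supply it, and the mechanism you sketch is wrong.

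\textbf{Chunks.} Concatenating $N_n$ equal-length chunks does not concentrate the transform on $N_n$ frequencies. What does work is repeating a \emph{single} chunk of odd length $m$ some $K$ times, with $M_n=mK$ and $K$ odd. This concentrates the transform on the $m$ multiples of $K$, with values equal to the chunk's own transform. In the paper this is the identity $B(\widetilde\sigma)=B(\sigma)+2m\{0,\ldots,K-1\}$, $m_{B(\widetilde\sigma)}(t)=m_{B(\sigma)}(t)D_K(2mt)$ of \eqref{eq:fixenv-repeat}. It carries $G_s$, the exact maximum and the continuous error $\|m_B-D_{2m}\|_\infty$ from a short length to the common length $M_n$.

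\textbf{Peak insertion.} Flipping an $\eta_n$-proportion of signs is exactly where control is needed. A structured flip set, say the negative signs in an interval, moves low-frequency coefficients by Dirichlet-kernel amounts comparable to $\eta_n$. That can destroy the exact maximum at $1/2$, and after flipping no Gauss-sum formula describes the transform. The missing ingredient is a statement like Lemma~\ref{lem:fixed-envelope-signs}: take independent signs with mean $t$, use a sub-Gaussian tail and a union bound to make every nonzero coefficient $\lesssim\sqrt{\log m/m}$, then flip only $O(\sqrt{m\log m})$ signs to make the mean exactly $t$.

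\textbf{Fixed $M_n$.} $M_n$ may not depend on $p_0$. If your profile $N_n\asymp n^\beta(\log n)^\gamma$ for a continuum of $\beta$ is realized by repetition, it needs odd divisors of the fixed $M_n$ of all these sizes, which you do not arrange. The paper avoids this with two vectors per level, both of mean exactly $t_n$: a repeated one of short length $L_n\approx u_n(\log u_n)^4$ and a flat one of length $M_n=L_nK_n$. It then tunes $s_c=2+2d$ through the density of the set $\mathcal A_d$ of levels carrying the repeated vector, which needs only the crude two-sided bounds \eqref{eq:fixenv-finite-moments}.

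Several supporting claims are also wrong as stated.

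\textbf{Growth of $Q_n$.} The growth $\log Q_n\asymp n\log n$ makes $M_n$ polynomial in $n$. H\"older on the $M_n$ odd residues gives $G_{s,n}\ge M_n^{1-s/2}$, so $s_c>2$ for every member and $p_0=2$ is unreachable. The resonances would also be of order $\sqrt{\log\log\xi_n/\log\xi_n}$, not $\log\log\xi_n/\sqrt{\log\xi_n}$. You need $\log M_n\approx(\log n)^2$, giving $\log Q_n\sim n(\log n)^2$.

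\textbf{Envelope.} The envelope in (ii) is not routine. The natural route needs $\varepsilon_n\lesssim n^{-1/2}$ for every member. Passing from grid values to that continuous bound costs the logarithmic Lebesgue constant of interpolation, so the short length must exceed $\eta_n^{-2}\asymp n$ by a logarithmic factor. Your extreme $N_n\asymp n$ has no such margin.

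\textbf{Singularity.} This does not follow from $\prod_n(1+G_{2,n})=\infty$. By Proposition~\ref{prop:quadratic-endpoint} that only excludes an $L^2$ density. Use instead that the nonempty level-$N$ cylinders have total length $2^{-N}$.

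\textbf{Spectrum.} With $L_n=2\{0,\ldots,M_n-1\}$ the points $\lambda/Q_N$ fill almost all of $[0,1)$, outside the window $|u|\le1/2$ of Lemma~\ref{lem:bounded-frame-tail}. For $\lambda=Q_N-2Q_{N-1}$ one gets $|\widehat{\nu_N}(\lambda/Q_N)|\lesssim\varepsilon_{N+1}+R_N^{-1}\to0$, so the uniform lower bound needed for completeness fails. The centered choice $L_n=2\{-(M_n-1)/2,\ldots,(M_n-1)/2\}$ repairs this.
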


The fixed data in this theorem are chosen once for the entire family;
they are not arbitrary data prescribed in advance. The construction belongs to the class of spectral Moran measures with equal weights at each level. It does not impose the additional
prime-power parametrization and finite-field digit formula used in the
finite-field examples in Section~\ref{sec:restriction}. Its finite sign vectors are selected
by an existence argument and then fixed, so the resulting measures are
deterministic.

\subsection{Sign vectors with prescribed mean and controlled Fourier coefficients}

The construction separates one prescribed large Fourier coefficient from
all the others. Periodically repeating a short sign vector changes the
number of nonzero Fourier coefficients without changing that prescribed
coefficient.

\begin{lemma}\label{lem:fixed-envelope-signs}
There is an absolute constant $A$ with the following property. If $m$ is
odd and $a$ is odd with $|a|\le m$, then there is a sign vector
$\sigma=(\sigma_x)_{x=0}^{m-1}\in\{-1,1\}^m$ with
\[
 \frac1m\sum_x\sigma_x=t:=\frac am,
 \qquad
 \max_{1\le k<m}\left|\frac1m\sum_x\sigma_xe^{-2\pi ikx/m}\right|
 \le\delta(m):=A\sqrt{\frac{\log(2m)}m}.
\]
\end{lemma}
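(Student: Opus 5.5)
We would prove the lemma by a random construction with a fixed number of plus signs. Since $m$ and $a$ are odd with $|a|\le m$, the integer $P=(m+a)/2$ lies in $\{0,\ldots,m\}$. Any sign vector with exactly $P$ entries equal to $+1$ has mean $(2P-m)/m=a/m=t$. So the mean condition is automatic once we restrict to this class. It remains to find one such vector whose nonzero Fourier coefficients are all small.

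We choose the set $S=\{x:\sigma_x=+1\}$ uniformly at random among the $P$-element subsets of $\Z/m\Z$. For $1\le k<m$ we have $\sum_xe^{-2\pi ikx/m}=0$, so
\[
 \frac1m\sum_x\sigma_xe^{-2\pi ikx/m}=\frac2m\sum_{x\in S}e^{-2\pi ikx/m}.
\]
Moreover $\mathbb E\sum_{x\in S}e^{-2\pi ikx/m}=\frac Pm\sum_x e^{-2\pi ikx/m}=0$. The task is therefore a concentration bound for a sum over a uniformly random fixed-size subset, that is, sampling without replacement.

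We treat the real and imaginary parts separately. Each is a function $F(S)=\sum_{x\in S}c_x$ with $|c_x|\le1$ and mean zero. Hoeffding's inequality for sampling without replacement (the without-replacement sum is dominated in convex order by the with-replacement one), or McDiarmid's inequality on the slice, gives
\[
 \Prob\bigl(|F(S)-\mathbb E F|\ge\lambda\bigr)\le2\exp\bigl(-c\lambda^2/m\bigr).
\]
For the slice one uses swap-Lipschitz constant $2$, which yields a variance proxy $O(\min(P,m-P))\le O(m)$. Take $\lambda=\frac{A}{2\sqrt2}\sqrt{m\log(2m)}$. Then the part of the coefficient coming from $F$ exceeds $\delta(m)/\sqrt2$ with probability at most $2(2m)^{-cA^2/8}$. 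A union bound over the $m-1$ frequencies and the two parts gives total failure probability at most $4m(2m)^{-cA^2/8}<1$ once $A$ is a large absolute constant. Hence some admissible $S$ works, and the corresponding $\sigma$ satisfies both requirements.

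The trivial cases cause no trouble. If $P=0$ or $P=m$, then $\sigma$ is constant and all nonzero coefficients vanish. Small $m$ can be absorbed into $A$, since every coefficient is at most $1$ and $\delta(m)\ge1$ once $A$ is large.

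The only step needing care is the concentration inequality without replacement, so that $A$ is uniform in $a$ and $m$. The fallback is to apply the bounded-differences inequality to the random permutation model: take $S=\pi(\{0,\ldots,P-1\})$ with $\pi$ a uniform permutation of $\Z/m\Z$. A transposition changes $F$ by at most $2$, so the standard permutation McDiarmid bound gives $\exp(-c\lambda^2/m)$ directly.
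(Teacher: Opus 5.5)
Your argument is correct, but it handles the mean constraint differently from the paper.

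\textbf{The paper's route.} It takes independent signs $X_x$ with $\mathbb E X_x=t$ and applies the ordinary Hoeffding bound for independent bounded summands to $\sum_x(X_x-t)e^{-2\pi ikx/m}$ for all $k=0,\ldots,m-1$, including $k=0$. It then repairs the mean. The $k=0$ bound shows that $\sum_xX_x-a$ is an even integer of size $O(\sqrt{m\log(2m)})$. Flipping $|\sum_xX_x-a|/2$ signs of the appropriate kind makes the total exactly $a$, and moves every Fourier sum by at most $|\sum_xX_x-a|$.

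\textbf{Your route.} You sample uniformly from the slice of vectors with exactly $P=(m+a)/2$ plus signs. The mean condition then holds for every sample, so you need no repair step, no parity or feasibility check, and no $k=0$ term in the union bound.

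\textbf{The trade-off.} You need concentration without replacement, via Hoeffding's convex-order comparison or the Maurey--McDiarmid inequality for random permutations. The paper needs only the elementary independent case, which it even derives from scratch via the log-moment-generating function. Thus the paper's ``sample independently, then correct deterministically'' route is more self-contained, while yours is structurally cleaner.

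Your constants also check out. With $\lambda=\frac{A}{2\sqrt2}\sqrt{m\log(2m)}$, the bounds $|F_{\mathrm{re}}|,|F_{\mathrm{im}}|\le\lambda$ give exactly $\frac2m\sqrt2\,\lambda=\delta(m)$. The union bound falls below $1$ for a large absolute $A$, and the case $m=1$ is vacuous.
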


\begin{proof}
Choose independent signs $X_x$ with $\mathbb E X_x=t$. For each $k$,
apply the bounded-variable exponential estimate to the real and imaginary
parts of $\sum_x(X_x-t)e^{-2\pi ikx/m}$. It gives
\[
 \mathbb P\!\left(\left|\sum_x(X_x-t)e^{-2\pi ikx/m}\right|>v\right)
 \le4\exp(-v^2/(4m)).
\]
For completeness, the real-variable estimate follows by differentiating
the logarithm of the moment-generating function twice: a variable in an
interval of length two has tilted variance at most one, hence its centered
logarithmic moment-generating function is at most $u^2/2$.
Independence and the exponential Markov inequality then give the displayed
bound after treating the two components separately.

A union bound over $k=0,\ldots,m-1$ shows that, for a sufficiently large
absolute $A_1$, there is a realization in which all these sums have modulus
at most $A_1\sqrt{m\log(2m)}$. Its total sum differs from $a$ by an even
integer of this size. Flip exactly $|\sum_xX_x-a|/2$ signs of the
appropriate type. This is possible because $a\in[-m,m]$ and has the
correct parity. The new total is exactly $a$, and each Fourier sum changes
by at most $|\sum_xX_x-a|$. For $k\ne0$ the original expected Fourier
sum was zero. Increasing the absolute constant proves the lemma.
\end{proof}

\subsection{Spectral digit sets and the common construction}

For an odd integer $m$ and a sign vector $\sigma$, let $B(\sigma)$ be
the following subset of $\{0,\ldots,2m-1\}$: for each $x\in
\{0,\ldots,m-1\}$, choose the unique $b\in\{x,x+m\}$ satisfying
$(-1)^b=\sigma_x$. Thus $B(\sigma)$ contains exactly one representative
of each residue class modulo $m$. Put
\[
 F_\sigma(k)=\frac1m\sum_x\sigma_xe^{-2\pi ikx/m},\qquad
 m_B(t)=\frac1m\sum_{b\in B(\sigma)}e^{-2\pi ibt}.
\]
The even and odd grid frequencies give the exact identities
\begin{align}
 m_B(2l/(2m))&=0&& (1\le l<m),\label{eq:fixenv-even}\\
 m_B((m+2l)/(2m))&=F_\sigma(l)&& (l\bmod m).
 \label{eq:fixenv-odd}
\end{align}
Frequencies in \eqref{eq:fixenv-odd} are understood modulo $2m$.
In particular $m_B(1/2)=F_\sigma(0)=t$.
Moreover,
\begin{equation}\label{eq:fixenv-finite-hadamard}
 \left(2m,B(\sigma),
 2\{-(m-1)/2,\ldots,(m-1)/2\}\right)
 \quad\text{is a Hadamard triple}.
\end{equation}
Indeed, its normalized Fourier matrix is the ordinary Fourier matrix on
$\Z/m\Z$, with its rows permuted.

Suppose $0<t<1$ and the conclusion of Lemma~\ref{lem:fixed-envelope-signs} holds.
Parseval for $F_\sigma$ gives $\sum_k|F_\sigma(k)|^2=1$, so
\begin{equation}\label{eq:fixenv-finite-moments}
 \begin{split}
 G_s(B)&=t^s+H_s(\sigma),\\
 (m-1)^{1-s/2}(1-t^2)^{s/2}
 &\le H_s(\sigma)
 :=\sum_{k=1}^{m-1}|F_\sigma(k)|^s
 \le\delta(m)^{s-2}(1-t^2),~~ s>2.
 \end{split}
\end{equation}
The lower bound is the finite-dimensional $\ell^2$--$\ell^s$
inequality; the upper bound uses the maximum and Parseval.
If $\delta(m)<t$, then the maximum in the full nonzero grid is
\emph{exactly} $t$.

We will also need control between the grid points. For any trigonometric
polynomial $P$ supported on the frequencies $0,-1,\ldots,-(r-1)$,
Fourier interpolation gives
\[
 P(t)=\sum_{k=0}^{r-1}P(k/r)D_r(t-k/r),\qquad
 \sup_t\sum_{k=0}^{r-1}|D_r(t-k/r)|\le C\log(2r).
\]
The last estimate follows from
$|D_r(u)|\le\min\{1,(2r\operatorname{dist}(u,\Z))^{-1}\}$ and the
harmonic-series bound after ordering the distances to the grid.
Apply interpolation to $m_B-D_{2m}$, keeping the coefficient at $1/2$
separate. Equations~\eqref{eq:fixenv-even}--\eqref{eq:fixenv-odd} yield
\begin{equation}\label{eq:fixenv-continuous-mask}
 \|m_B-D_{2m}\|_\infty
 \le t+C\delta(m)\log(2m)
 \le t+C\frac{(\log(2m))^{3/2}}{\sqrt m}.
\end{equation}

Finally, let $M=mK$ with both $m$ and $K$ odd, and repeat $\sigma$
periodically to a sign vector $\widetilde\sigma$ of length $M$.
For each $b\in B(\sigma)$, the numbers $b+2mq$, $0\le q<K$,
represent all residue classes modulo $M$ congruent to $b$ modulo $m$:
multiplication by $2$ permutes the classes modulo the odd integer $K$.
Their parity is unchanged, so they select exactly the digits prescribed
by $\widetilde\sigma$. Thus
\begin{equation}\label{eq:fixenv-repeat}
 B(\widetilde\sigma)=B(\sigma)+2m\{0,\ldots,K-1\},\qquad
 m_{B(\widetilde\sigma)}(t)=m_{B(\sigma)}(t)D_K(2mt).
\end{equation}
Consequently,
\begin{equation}\label{eq:fixenv-repeat-properties}
 \begin{split}
 G_s(B(\widetilde\sigma))&=G_s(B(\sigma)),\\
 \eta(B(\widetilde\sigma))&=\eta(B(\sigma)),\\
 \|m_{B(\widetilde\sigma)}-D_{2M}\|_\infty
 &\le\|m_{B(\sigma)}-D_{2m}\|_\infty.
 \end{split}
\end{equation}
For the first two identities, the length-$M$ Fourier transform of the
repeated vector vanishes outside the multiples of $K$ and equals
$F_\sigma$ at those multiples. The last inequality follows from
\eqref{eq:fixenv-repeat} and $D_{2M}=D_{2m}D_K(2m\,\cdot)$.

\begin{proof}[Proof of Theorem~\ref{thm:fixed-envelope}]
We now assemble the preceding ingredients to construct the required family.

\smallskip
\noindent\emph{Step 1: Common scales and two digit choices.}

Choose a sufficiently large fixed integer $n_0$, and put $u_n=n+n_0$.
Let $L_n$ and $K_n$ be odd integers satisfying
\begin{equation}\label{eq:fixenv-scales}
 \begin{split}
 u_n(\log u_n)^4&\le L_n\le u_n(\log u_n)^4+2,\\
 e^{(\log u_n)^2}&\le K_n\le e^{(\log u_n)^2}+2,\\
 M_n&=L_nK_n,\qquad R_n=2M_n.
 \end{split}
\end{equation}
Choose an odd integer $a_n$ with
$|a_n-L_nu_n^{-1/2}|\le1$, and set
\begin{equation}\label{eq:fixenv-eta-choice}
 t_n=\frac{a_n}{L_n}\asymp u_n^{-1/2}.
\end{equation}
Taking $n_0$ large ensures $0<t_n<1/2$ and
$\delta(L_n),\delta(M_n)<t_n/2$ for every $n$.

At every level fix two sign vectors. The short one, $\sigma_n$, has
length $L_n$ and sum $a_n$ and satisfies Lemma~\ref{lem:fixed-envelope-signs}. The long
one, $\rho_n$, has length $M_n$ and sum $a_nK_n$ and satisfies the same
lemma. The second application is legitimate since $a_nK_n$ is odd and
its normalized mean is again $t_n$. Define
$
 B_n^{\rm r}=B(\widetilde\sigma_n)$ and $
 B_n^{\rm f}=B(\rho_n),$
where $\widetilde\sigma_n$ is the repetition of $\sigma_n$ to length
$M_n$. Both sets have cardinality $M_n$, both have maximum exactly
$\eta_n=t_n$, and both have $G_{2,n}=1$.
Equations~\eqref{eq:fixenv-continuous-mask}--\eqref{eq:fixenv-repeat-properties} give
\begin{equation}\label{eq:fixenv-uniform-eps}
 \max\{\varepsilon_n^{\rm r},\varepsilon_n^{\rm f}\}\le C u_n^{-1/2}.
\end{equation}
Here the short-vector error is at most
$C u_n^{-1/2}(\log u_n)^{-1/2}$ in addition to $t_n$, and the
long-vector error is smaller. In particular the exact moment criterion
applies to every choice of these two sets at successive levels.

\smallskip
\noindent\emph{Step 2: Prescribing the restriction exponent.}

Fix $s>2$, and abbreviate $\gamma=(s-2)/2$. For the long vector,
\begin{equation}\label{eq:fixenv-flat-summable}
 G_{s,n}^{\rm f}\le t_n^s+C_s
 \left(\frac{\log(2M_n)}{M_n}\right)^\gamma,
 \qquad \sum_nG_{s,n}^{\rm f}<\infty.
\end{equation}
Indeed, $\sum_nt_n^s<\infty$ and $M_n$ grows faster than every fixed
power of $n$. For the repeated vector, \eqref{eq:fixenv-finite-moments} gives
\begin{equation}\label{eq:fixenv-repeated-bounds}
 \frac{c_s u_n^{-\gamma}}{(\log u_n)^{4\gamma}}
 \le G_{s,n}^{\rm r}-t_n^s
 \le\frac{C_s u_n^{-\gamma}}{(\log u_n)^{3\gamma}}.
\end{equation}

For $0<d\le1$, define the set of levels
\begin{equation}\label{eq:fixenv-selected-levels}
 \mathcal A_d=\left\{
 n_j(d):=\left\lceil
 \bigl(j(\log(j+e))^2\bigr)^{1/d}\right\rceil:j\ge1\right\},
 \qquad \mathcal A_0=\varnothing.
\end{equation}
These indices are strictly increasing. At levels in $\mathcal A_d$ use
$B_n^{\rm r}$; at all other levels use $B_n^{\rm f}$. Denote the resulting
equal-weighted Moran measure by $\mu_d$.

For $d>0$, equations~\eqref{eq:fixenv-flat-summable}--\eqref{eq:fixenv-repeated-bounds}
show
\begin{equation}\label{eq:fixenv-threshold-series}
 \sum_nG_{s,n}(\mu_d)<\infty
 \quad\Longleftrightarrow\quad s\ge2+2d,
 \qquad s>2.
\end{equation}
To check the endpoint, if $\gamma=d$ then the upper bound on the selected
levels is summable, since
\[
 u_{n_j(d)}^{-d}(\log u_{n_j(d)})^{-3d}
 \lesssim_d\frac1{j(\log(j+e))^2}.
\]
For $\gamma>d$, summability follows from the power of $j$ alone.
If $0<\gamma<d$, the lower bound in \eqref{eq:fixenv-repeated-bounds}, evaluated
at $n_j(d)$, is comparable to
$
 j^{-\gamma/d}(\log j)^{-2\gamma/d-4\gamma},
$
whose series diverges. This proves \eqref{eq:fixenv-threshold-series} without
needing an exact asymptotic for the individual sign-vector moments.
For $d=0$, the series converges for every $s>2$.

Apply \eqref{eq:fixenv-criterion} and duality. We obtain
\begin{equation}\label{eq:fixenv-p-d}
 p_{\rm res}(\mu_d)=\frac{2+2d}{1+2d},\qquad 0\le d\le1.
\end{equation}
For $d>0$ the critical endpoint holds by \eqref{eq:fixenv-threshold-series}.
Given $p_0\in[4/3,2]$, take
$d=(2-p_0)/(2(p_0-1))$. It remains to verify spectrality, singularity,
the common envelope, and the dimension assertions.

\smallskip
\noindent\emph{Step 3: A common  exponential spectrum and singular continuity.}
Let $\Lambda=\bigcup_N\Lambda_N,$ where
\[
 \text{$\mathcal L_n=2\{-(M_n-1)/2,\ldots,(M_n-1)/2\}$~~ and~~ $
 \Lambda_N=\mathcal L_1+R_1\mathcal L_2+\cdots+Q_{N-1}\mathcal L_N.$}
\]
Every finite prefix has  exponential spectrum $\Lambda_N$, by
\eqref{eq:fixenv-finite-hadamard}. These sets are independent of $d$, are nested,
and satisfy
\begin{equation}\label{eq:fixenv-spectrum-size}
 |\lambda|\le\frac{Q_N-1}{2}\qquad(\lambda\in\Lambda_N).
\end{equation}

For each member of the family let $\nu_N$ be the normalized tail,
the law of
$
 \sum_{j>N}\frac{J_j}{R_{N+1}\cdots R_j}.
$
It is supported on $[0,1]$. If $\lambda_0$ denotes Lebesgue probability measure on $[0,1]$,
then, uniformly for $|t|\le1/2$,
\begin{equation}\label{eq:fixenv-tail-uniform}
 |\widehat\nu_N(t)-\widehat\lambda_0(t)|
 \le\varepsilon_{N+1}+\frac{4\pi|t|}{R_{N+1}}\longrightarrow0.
\end{equation}
Indeed, factor the two transforms as
\[
 m_{N+1}(t/R_{N+1})\widehat\nu_{N+1}(t/R_{N+1}),\qquad
 D_{R_{N+1}}(t/R_{N+1})\widehat\lambda_0(t/R_{N+1}),
\]
and use $|\widehat\rho(u)-1|\le2\pi|u|$ for probabilities supported
on $[0,1]$. Since $|\widehat\lambda_0(t)|\ge2/\pi$ on this interval,
\eqref{eq:fixenv-tail-uniform} gives a common positive lower bound for the
tail factors at $\lambda/Q_N$, $\lambda\in\Lambda_N$, for large $N$.

Digit expansions are unique outside cylinder endpoints, and each endpoint
has measure zero because $(M_1\cdots M_N)^{-1}\to0$.
The preceding tail bound is the usual completeness condition for Hadamard Moran measures
(see, for example, \cite[Theorem~1.3]{LaiWang}). Here is a direct verification.
The exponential family indexed by $\Lambda$ is orthonormal. If $g$ is
a function of the first $N$ digits, let $\mu_{\le N}$ be the finite-prefix
measure and write
\[
 X=X_{\le N}+\frac{Z_N}{Q_N},
\]
where $X_{\le N}$ and $Z_N$ are independent and $Z_N$ has law $\nu_N$.
Then, for $\lambda\in\Lambda_N$,
\begin{align*}
 \langle g,e_\lambda\rangle_{L^2(\mu_d)}
 &=\mathbb E\left[g(X_{\le N})e^{-2\pi i\lambda X_{\le N}}
 e^{-2\pi i(\lambda/Q_N)Z_N}\right]\\
 &=\langle g,e_\lambda\rangle_{L^2(\mu_{\le N})}
 \widehat\nu_N(\lambda/Q_N).
\end{align*}
Finite-prefix Parseval and the tail lower bound therefore give
\[
 \sum_{\lambda\in\Lambda_N}|\langle g,e_\lambda\rangle_{L^2(\mu_d)}|^2
 \ge c^2\|g\|_{L^2(\mu_d)}^2.
\]
Let $P$ be the orthogonal projection onto the closed span of
$\{e_\lambda:\lambda\in\Lambda\}$. The preceding inequality implies
$\|Pg\|_2\ge c\|g\|_2$ on the dense class of cylinder functions, hence
on all of $L^2(\mu_d)$. In particular, if $h\in\ker P$, then
$0=\|Ph\|_2\ge c\|h\|_2$, so $\ker P=\{0\}$. Since an orthogonal
projection has range $(\ker P)^\perp$, it follows that $P$ is the identity.
This proves that $\Lambda$ is a common  exponential spectrum.

At level $N$, every nonempty cylinder has mass
\begin{equation}\label{eq:fixenv-cylinder-mass}
 \frac1{M_1\cdots M_N}=\frac{2^N}{Q_N}.
\end{equation}
The covering by these cylinders has total length $2^{-N}$, while their
masses tend to zero. Hence each measure is singular and continuous.
In particular restriction at $p=2$ fails. To see the latter directly,
restriction at $p=2$, applied to smooth frequency bumps and using
Plancherel, would imply $\mu(I)\le C|I|$ for every interval $I$, and
therefore a bounded density. This also completes the endpoint assertion
for $d=0$.

\smallskip
\noindent\emph{Step 4: Common Fourier decay and common resonances.}
Here $\varepsilon_n=O(n^{-1/2})$, so the mask errors need not be summable.
We retain a small-frequency factor in the telescoping estimate. If
$P_n=m_n-D_{R_n}$, then $P_n(0)=0$, and Bernstein's inequality gives
$
 \|P_n'\|_\infty\le2\pi(R_n-1)\varepsilon_n.
$
Let $\mu^{[n]}$ be the first $n$ digit levels followed by a uniform tail,
and put $\mu^{[0]}=\lambda_0$. Factoring consecutive transforms yields
\begin{equation}\label{eq:fixenv-refined-telescope}
 |\widehat{\mu^{[n]}}(\xi)-\widehat{\mu^{[n-1]}}(\xi)|
 \le C\varepsilon_n
 \min\left\{1,\frac{|\xi|}{Q_{n-1}}\right\}
 \min\left\{1,\frac{Q_n}{|\xi|}\right\}.
\end{equation}
Here the first minimum uses $P_n(0)=0$ and Bernstein, and the second
uses the Fourier transform of the uniform tail. For a fixed $\xi$ this
series is absolutely convergent, so telescoping and weak convergence give
\begin{equation}\label{eq:fixenv-refined-envelope}
 |\widehat\mu_d(\xi)|\le\frac C{|\xi|}
 +C\sum_{n\ge1}\varepsilon_n
 \min\left\{1,\frac{|\xi|}{Q_{n-1}}\right\}
 \min\left\{1,\frac{Q_n}{|\xi|}\right\},\quad |\xi|\ge1.
\end{equation}
For $Q_{N-1}\le|\xi|<Q_N$, use $R_j\ge2$ and
\eqref{eq:fixenv-uniform-eps}. The terms before $N$ are bounded by
$C\sum_{n<N}u_n^{-1/2}2^{-(N-1-n)}$, and those after $N$ by
$C\sum_{n>N}u_n^{-1/2}2^{-(n-N-1)}$. Both are $O(u_N^{-1/2})$.
Consequently
\begin{equation}\label{eq:fixenv-level-envelope}
 |\widehat\mu_d(\xi)|\le C u_N^{-1/2}
 \qquad(Q_{N-1}\le|\xi|<Q_N).
\end{equation}
By \eqref{eq:fixenv-scales},
\begin{equation}\label{eq:fixenv-scale-growth}
 T_N:=\log Q_N\sim N(\log N)^2,\quad
 \frac{T_N}{T_{N-1}}\rightarrow1,\quad \frac{N}{T_N}\longrightarrow0.
\end{equation}
Converting \eqref{eq:fixenv-level-envelope} from $N$ to $|\xi|$ proves
\eqref{eq:fixenv-envelope}.

At $\xi_n=Q_n/2$, all preceding masks equal one, since
$Q_n/(2Q_j)$ is an integer for $j<n$. The $n$th mask equals $t_n$.
Thus the exact identity
$
 \widehat\mu_d(Q_n/2)=t_n\widehat\nu_n(1/2)
$
and \eqref{eq:fixenv-tail-uniform} imply \eqref{eq:fixenv-resonance}. This proves
that the common envelope is sharp in order along a common sequence.

\smallskip
\noindent\emph{Step 5: Common dimension data.}
Choose $N$ so that $Q_N^{-1}\le r<Q_{N-1}^{-1}$. For every point
$x\in\supp\mu_d$, a level-$N$ cylinder containing $x$ lies in
$B(x,r)$, up to zero-mass endpoints. On the other hand, a ball of radius
$r$ meets at most $2rQ_N+3$ grid intervals at that level. By
\eqref{eq:fixenv-cylinder-mass},
\begin{equation}\label{eq:fixenv-ball-masses}
 \frac{2^N}{Q_N}\le\mu_d(B(x,r))\le5r2^N.
\end{equation}
The upper bound holds for all centers in $\R$. Set $L=-\log r$;
then $T_{N-1}<L\le T_N$. Equations~\eqref{eq:fixenv-scale-growth} and
\eqref{eq:fixenv-ball-masses} give, uniformly on the support,
\[
 1-\frac{N\log2+\log5}{L}
 \le\frac{-\log\mu_d(B(x,r))}{L}
 \le\frac{T_N-N\log2}{T_{N-1}}\longrightarrow1.
\]
Thus every point has local dimension one. The uniform upper mass estimate
also gives $\mu_d(B(x,r))\le C_a r^a$ for every $a<1$. Therefore
$D_\infty=1$, and monotonicity and the ambient upper bound imply
$D_q=1$ for all $1<q\le\infty$. The support has Hausdorff dimension one.

The resonances \eqref{eq:fixenv-resonance} decay more slowly than any positive
power of $\xi_n$, so $\dim_{\mathrm F}^0\mu_d=0$. To verify the entire Fourier
spectrum directly, recall that for $\theta>0$ its critical exponent is
determined by
$
 \int_{\R}|\widehat\mu_d(\xi)|^{2/\theta}
 |\xi|^{a/\theta-1}\,\dd\xi.
$
If $0<a<\theta$, this integral is finite: use
$|\widehat\mu_d|^{2/\theta}\le|\widehat\mu_d|^2$ and finite
$a/\theta$-energy, which follows from the preceding mass bounds.
Hence $\dim_{\mathrm F}^\theta\mu_d\ge\theta$.

Conversely, $\supp\mu_d\subset[0,1]$ implies
$|\widehat\mu_d'(\xi)|\le2\pi$. Around each $\xi_n$ there is consequently
an interval of length $c t_n$ on which
$|\widehat\mu_d|\ge c t_n$, with absolute $c>0$. If $a>\theta$, its
contribution to the displayed integral is at least
$
 c_{a,\theta}\,t_n^{2/\theta+1}\xi_n^{a/\theta-1}\longrightarrow\infty,
$
by \eqref{eq:fixenv-eta-choice} and \eqref{eq:fixenv-scale-growth}. The integral is
therefore infinite, proving $\dim_{\mathrm F}^\theta\mu_d\le\theta$. This completes
\eqref{eq:fixenv-dimensions} and the proof of Theorem~\ref{thm:fixed-envelope}.
\end{proof}

\subsection{The optimal range for a fixed sequence of maximal Fourier coefficients}

There is also a genuine obstruction to asking for the entire range
$[1,2]$ with one fixed sequence $(\eta_n)$.

\begin{proposition}\label{prop:fixed-envelope-obstruction}
Under the hypotheses of \eqref{eq:fixenv-criterion}, set
$
 \tau=\inf\{t>0:\sum_n\eta_n^t<\infty\}\in[0,\infty].
$
If $\tau<\infty$, then
\begin{equation}\label{eq:fixenv-universal}
 \max\{2,\tau\}\le s_c\le\tau+2.
\end{equation}
If $\tau=\infty$, then $s_c=\infty$ and $p_{\rm res}=1$.
\end{proposition}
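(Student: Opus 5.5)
Everything reduces to comparing $G_{s,n}$ with powers of $\eta_n$, using only two facts about the one-level coefficients. Under the hypotheses of \eqref{eq:fixenv-criterion} we have $C_n\le C_0$, so $s_c=\inf\{s>2:\sum_nG_{s,n}<\infty\}$, and the answer depends only on these series.

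\textbf{Pointwise sandwich.} For every $s>2$,
\[
 \eta_n^s\le G_{s,n}\le \eta_n^{s-2}\,G_{2,n}.
\]
The left inequality is trivial: $G_{s,n}$ contains the term $\eta_n^s$.

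\textbf{Uniform bound on $G_{2,n}$.} By discrete Parseval on $\Z/R_n\Z$,
\[
 \sum_{k=0}^{R_n-1}|m_n(k/R_n)|^2=R_n\sum_bw_{n,b}^2\le C_n\le C_0.
\]
Hence $G_{2,n}=R_n\sum_bw_{n,b}^2-1\le C_0$ uniformly in $n$, and so $G_{s,n}\le C_0\eta_n^{s-2}$.

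\textbf{Upper bound $s_c\le\tau+2$.} If $\tau<\infty$ and $s>\tau+2$, then $s-2>\tau$. Thus $\sum_n\eta_n^{s-2}<\infty$, and therefore $\sum_nG_{s,n}<\infty$. Taking the infimum over such $s$, together with $s>2$ in the definition of $s_c$, gives $s_c\le\tau+2$.

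\textbf{Lower bound $s_c\ge\max\{2,\tau\}$.} The bound $s_c\ge2$ holds by definition. For $s_c\ge\tau$, take any $s>2$ with $\sum_nG_{s,n}<\infty$. The left inequality of the sandwich gives $\sum_n\eta_n^s<\infty$, so $s\ge\tau$. Taking the infimum gives $s_c\ge\tau$.

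\textbf{Case $\tau=\infty$.} Now $\sum_n\eta_n^s=\infty$ for every $s$, so $\sum_nG_{s,n}=\infty$ for all $s>2$. Hence $s_c=\infty$, and Corollary~\ref{cor:critical-index} gives $p_{\rm res}=1$.

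There is no real obstacle. The only point to check is the Parseval identity for the mask on the $R_n$-grid, namely $\sum_k|m_n(k/R_n)|^2=R_n\|w_n\|_2^2$. This is the same computation as the frame bound in Corollary~\ref{cor:q41-one-level} with $h\equiv1$, and it is what supplies the uniform constant in $G_{s,n}\le C_0\eta_n^{s-2}$.
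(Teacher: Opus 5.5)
Your proposal is correct and follows the paper's proof: the sandwich $\eta_n^s\le G_{s,n}\le\eta_n^{s-2}G_{2,n}$, with discrete Parseval bounding $G_{2,n}$ uniformly (the paper records $G_{2,n}\le C_0-1$, but your $C_0$ is enough), gives both inequalities. The only implicit step is that $\eta_n\le1$ makes $t\mapsto\sum_n\eta_n^t$ nonincreasing, so that $s-2>\tau$ forces convergence; it is harmless.
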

Consequently, a family with the same $(\eta_n)$ and uniformly bounded one-level
frame bounds cannot contain both a member with $p_{\rm res}=1$ and a member
with $p_{\rm res}>1$. In particular, when $\eta_n\asymp n^{-1/2}$, every such member has
$p_{\rm res}\in[4/3,2]$. Thus the interval in Theorem~\ref{thm:fixed-envelope} is sharp
at the level of critical exponents for this fixed sequence
$(\eta_n)$.

\begin{proof}
Discrete Parseval gives
$
 G_{2,n}=R_n\sum_bw_{n,b}^2-1\le C_n-1.
$
Writing $C_0=\sup_nC_n$, we obtain, for every $s>2$,
\begin{equation}\label{eq:fixenv-max-moment}
 \eta_n^s\le G_{s,n}
 \le\eta_n^{s-2}G_{2,n}\le(C_0-1)\eta_n^{s-2}.
\end{equation}
The lower estimate rules out summability for $2<s<\tau$, whereas
the upper estimate gives it for $s>\tau+2$. This proves
\eqref{eq:fixenv-universal}, including the asserted conclusions at
$\tau=0,\infty$. For $\tau=2$, duality gives $4/3\le p_{\rm res}\le2$.
\end{proof}

\subsubsection{The optimal range for a general fixed maximum exponent}

The preceding construction extends to every positive summability index
$\tau$. This shows that the universal bounds in
Proposition~\ref{prop:fixed-envelope-obstruction} cannot be improved, even after requiring
common scales, digit cardinalities, an exponential spectrum, and a sharp Fourier envelope.

\begin{theorem}\label{thm:fixed-envelope-general-tau}
Fix $0<\tau<\infty$, and put
\[
 a_\tau=\max\{2,\tau\},\qquad
 I_\tau=\left[\frac{\tau+2}{\tau+1},
                   \frac{a_\tau}{a_\tau-1}\right].
\]
There are common sequences $R_n=2M_n$, $\eta_n\asymp n^{-1/\tau}$,
and a common  exponential spectrum $\Lambda$, such that every $p_0\in I_\tau$ is
realized as $p_{\rm res}(\mu_{\tau,p_0})$ by a singular continuous equal-weighted
spectral Moran measure with those data. All these measures satisfy
\eqref{eq:fixenv-dimensions} and have local dimension one at every support point.
They have the common quantitative bound
\begin{equation}\label{eq:fixenv-tau-envelope}
 |\widehat\mu_{\tau,p_0}(\xi)|\le
 C_\tau\left(
 \frac{[\log\log(e^e+|\xi|)]^2}{\log(e+|\xi|)}
 \right)^{1/\tau},
\end{equation}
and, uniformly over $p_0\in I_\tau$,
$
 |\widehat\mu_{\tau,p_0}(Q_n/2)|
 =\left(\frac2\pi+o(1)\right)\eta_n.
$
For $p_0<a_\tau/(a_\tau-1)$ the critical endpoint holds, whereas for
$p_0=a_\tau/(a_\tau-1)$ it fails. The interval $I_\tau$ is the full
possible interval of critical restriction exponents in the class of
\eqref{eq:fixenv-criterion} with $\eta_n\asymp n^{-1/\tau}$.
\end{theorem}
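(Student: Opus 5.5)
The proof repeats the proof of Theorem~\ref{thm:fixed-envelope}. Only the length $L_n$ of the short sign vector and the target mean $t_n$ are retuned, so that the repeated digit sets carry Fourier mass at height about $\eta_n\asymp u_n^{-1/\tau}$ rather than $u_n^{-1/2}$.

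\emph{Step 1: scales and digit sets.} Put $u_n=n+n_0$ and keep $K_n\approx e^{(\log u_n)^2}$ odd, $M_n=L_nK_n$, $R_n=2M_n$. Now choose $L_n$ odd with $L_n\approx u_n^{2/\tau}(\log u_n)^4$. Choose $a_n$ odd with $t_n=a_n/L_n\asymp u_n^{-1/\tau}$.
\begin{itemize}
\item Since $\delta(L_n)\asymp u_n^{-1/\tau}(\log u_n)^{-3/2}$, taking $n_0$ large gives $\delta(L_n),\delta(M_n)<t_n/2$ and $0<t_n<1/2$. Lemma~\ref{lem:fixed-envelope-signs} then yields the short vector $\sigma_n$ and the long vector $\rho_n$ exactly as before.
\item The sets $B_n^{\rm r}=B(\widetilde\sigma_n)$ and $B_n^{\rm f}=B(\rho_n)$ have cardinality $M_n$, satisfy $C_n=2$ and $G_{2,n}=1$, and have common maximum $\eta_n=t_n=m_n(1/2)$, by \eqref{eq:fixenv-odd} and \eqref{eq:fixenv-repeat-properties}.
\item By \eqref{eq:fixenv-continuous-mask} and \eqref{eq:fixenv-repeat-properties}, $\varepsilon_n\lesssim u_n^{-1/\tau}$.
\end{itemize}
The common spectrum $\Lambda$, the uniform tail bound \eqref{eq:fixenv-tail-uniform}, singular continuity, and the mass bounds \eqref{eq:fixenv-cylinder-mass}--\eqref{eq:fixenv-ball-masses} carry over verbatim. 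The bound $\log K_n\approx(\log u_n)^2$ still dominates $\log L_n$, so $T_N=\log Q_N\sim N(\log N)^2$ and \eqref{eq:fixenv-scale-growth} persists. Hence the local dimension, $D_q$ and Fourier spectrum arguments go through unchanged. The resonance argument now uses $t_n^{2/\theta+1}\xi_n^{a/\theta-1}\to\infty$, which holds since $t_n$ decays only polynomially in $\log\xi_n$.

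\emph{Step 2: moments and the selection rule.} Write $\gamma=(s-2)/2$.
\begin{itemize}
\item By \eqref{eq:fixenv-finite-moments}, $G^{\rm f}_{s,n}\le t_n^s+C_s(\log M_n/M_n)^\gamma$, so $\sum_n G^{\rm f}_{s,n}<\infty$ if and only if $s>\tau$.
\item For the repeated sets, \eqref{eq:fixenv-finite-moments} gives $G^{\rm r}_{s,n}-t_n^s\asymp u_n^{-(s-2)/\tau}$ up to factors $(\log u_n)^{\pm O(\gamma)}$. The lower bound $L_n^{-\gamma}$ and the upper bound $\delta(L_n)^{s-2}$ differ only by logarithms.
\end{itemize}
Use $B^{\rm r}$ on the levels $\mathcal A_d$ of \eqref{eq:fixenv-selected-levels}, $0\le d\le1$, and $B^{\rm f}$ elsewhere. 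On $n_j(d)$ the excess is comparable, up to logarithms, to $j^{-(s-2)/(\tau d)}$. The $(\log j)^2$ built into $n_j$ beats the $(\log u)^{3\gamma}$ loss in the upper bound exactly as before. This gives convergence if and only if $s\ge 2+\tau d$, for $s>\tau$. Together with the $t_n^s$ terms, which converge if and only if $s>\tau$ since $\sum 1/n$ diverges at $s=\tau$, we get
\[
s_c(\mu_d)=\max\{2,\tau,2+\tau d\}.
\]
As $d$ ranges over $[0,1]$ this covers $[a_\tau,\tau+2]$. Corollary~\ref{cor:critical-index} then gives $p_{\rm res}$ ranging over $I_\tau$.

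\emph{Step 3: endpoints and optimality.}
\begin{itemize}
\item If $s_c=2+\tau d>a_\tau$, the series converges at $s_c$, so the critical endpoint holds.
\item If $s_c=a_\tau=\tau>2$, the series diverges at $s_c$ because $\sum t_n^\tau\asymp\sum 1/n$, so the endpoint fails.
\item If $a_\tau=2$, failure at $p=2$ follows from singularity, exactly as in Step~3 of the earlier proof.
\end{itemize}
Optimality of $I_\tau$ is Proposition~\ref{prop:fixed-envelope-obstruction}, since $\eta_n\asymp n^{-1/\tau}$ gives summability index exactly $\tau$.

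\emph{Step 4: envelope.} The refined telescoping \eqref{eq:fixenv-refined-envelope} with $\varepsilon_n\lesssim u_n^{-1/\tau}$ gives $|\widehat\mu(\xi)|\lesssim u_N^{-1/\tau}$ for $Q_{N-1}\le|\xi|<Q_N$. Inverting $\log|\xi|\approx N(\log N)^2$ gives $N\approx\log|\xi|/(\log\log|\xi|)^2$, which is \eqref{eq:fixenv-tau-envelope}. The resonances $\widehat\mu(Q_n/2)=t_n\widehat\nu_n(1/2)$ with \eqref{eq:fixenv-tail-uniform} give the uniform $(2/\pi+o(1))\eta_n$.

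The main obstacle is Step~2 at the upper extreme $d=1$ when $\tau$ is small. There $L_n$ must be nearly $\eta_n^{-2}$, so that $H_s$ saturates the obstruction bound $\eta_n^{s-2}$ of \eqref{eq:fixenv-max-moment}. At the same time $\delta(L_n)<t_n$ is needed to pin the maximum exactly. The logarithmic slack $(\log u_n)^4$ in $L_n$ must be reconciled with the $(\log j)^2$ in the definition of $\mathcal A_d$ so that the endpoint at $s=2+\tau d$ still converges. I would first check this logarithm bookkeeping, adjusting the power of $\log$ in $\mathcal A_d$ if the $\tau$-dependent exponents require it.
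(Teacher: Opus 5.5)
Your proposal is correct and follows the paper's proof essentially step for step: the same retuning $L_n\approx u_n^{2/\tau}(\log u_n)^4$ and $t_n\asymp u_n^{-1/\tau}$ with $K_n$ unchanged, the same two-sided bounds on $G^{\rm r}_{s,n}-t_n^s$, the same level sets $\mathcal A_d$ giving $s_c=\max\{2,\tau,2+\tau d\}$, and the same endpoint analysis and appeal to Proposition~\ref{prop:fixed-envelope-obstruction}. The worry in your last paragraph is unfounded. The factor $(\log u_n)^{-3(s-2)/2}$ in the upper bound is a gain, not a loss, so at $s=2+\tau d$ the bound $u_{n_j(d)}^{-d}\le\bigl(j(\log(j+e))^2\bigr)^{-1}$ alone gives convergence for every $\tau$, and $\mathcal A_d$ needs no adjustment.
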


\begin{proof}
Keep the sign-vector lemma and the parity construction unchanged. Replace
only the short lengths and their prescribed means in
\eqref{eq:fixenv-scales}--\eqref{eq:fixenv-eta-choice}: choose odd integers $L_n,K_n,a_n$
so that
\begin{equation}\label{eq:fixenv-tau-parameters}
 \begin{aligned}
 u_n^{2/\tau}(\log u_n)^4&\le L_n
       \le u_n^{2/\tau}(\log u_n)^4+2,\\
 e^{(\log u_n)^2}&\le K_n\le e^{(\log u_n)^2}+2,\\
 |a_n-L_nu_n^{-1/\tau}|&\le1,\qquad
 t_n=\frac{a_n}{L_n},\quad M_n=L_nK_n,\quad R_n=2M_n.
 \end{aligned}
\end{equation}
The shift $n_0$ may depend on $\tau$. The short and long vectors have
means exactly $t_n$, and all their other coefficients are smaller than
$t_n/2$ once $n_0$ is large. Hence both digit sets have the exact same
maximum $\eta_n=t_n\asymp n^{-1/\tau}$, and
$
 \max\{\varepsilon_n^{\rm r},\varepsilon_n^{\rm f}\}
 \le C_\tau u_n^{-1/\tau}\rightarrow0.
$

For $s>2$, the long-vector contribution outside the mean is summable:
\begin{equation}\label{eq:fixenv-tau-flat}
 \sum_n\bigl(G_{s,n}^{\rm f}-t_n^s\bigr)<\infty,
 \qquad
 \sum_nt_n^s<\infty\ \Longleftrightarrow\ s>\tau.
\end{equation}
For the repeated vector, \eqref{eq:fixenv-finite-moments} now gives
\begin{equation}\label{eq:fixenv-tau-repeated}
 \frac{c_{s,\tau}u_n^{-(s-2)/\tau}}{(\log u_n)^{2(s-2)}}
 \le G_{s,n}^{\rm r}-t_n^s
 \le\frac{C_{s,\tau}u_n^{-(s-2)/\tau}}
              {(\log u_n)^{3(s-2)/2}}.
\end{equation}
Use repeated vectors on $\mathcal A_d$ from \eqref{eq:fixenv-selected-levels},
and long vectors elsewhere. For $0<d\le1$, summation along
$n_j(d)\asymp_d(j(\log j)^2)^{1/d}$ shows that the sum of these
non-mean terms converges exactly when
$(s-2)/\tau\ge d$. At equality the upper bound is at most a constant
times $1/(j(\log j)^2)$; below equality the lower bound has a power
of $j$ strictly less than one and hence diverges. For $d=0$ the
non-mean terms are summable for all $s>2$.
Thus, for $s>2$, the full criterion is
\begin{equation}\label{eq:fixenv-tau-exact}
 \sum_nG_{s,n}(\mu_{\tau,d})<\infty
 \quad\Longleftrightarrow\quad
 s>\tau\ \text{ and }\ s\ge2+\tau d,
 \qquad 0\le d\le1.
\end{equation}
For $d=0$ the second condition is automatic.
It follows that
\begin{equation}\label{eq:fixenv-tau-sc}
 s_c(\mu_{\tau,d})=\max\{2,\tau,2+\tau d\}.
\end{equation}
Every $s_0\in[a_\tau,\tau+2]$ occurs: take
$d=(s_0-2)/\tau\in[0,1]$. This proves the claimed interval after
duality. If $s_0>a_\tau$, both conditions in \eqref{eq:fixenv-tau-exact}
hold at $s=s_0$, so the endpoint is attained. If $s_0=a_\tau>2$,
then $s_0=\tau$ and the common term $\sum_nt_n^{\tau}$ diverges.
If $s_0=a_\tau=2$, singularity excludes restriction at $p=2$.

For completeness, all the additional fixed data survive this change of
parameters. Each digit set is still a complete system of representatives
modulo $M_n$, so the common exponential spectrum and its completeness proof
are unchanged. Also $\log Q_N\sim N(\log N)^2$, so
\eqref{eq:fixenv-cylinder-mass} and \eqref{eq:fixenv-ball-masses} still imply singular
continuity, $D_q=1$, and local dimension one everywhere. The exponential
sums in \eqref{eq:fixenv-refined-envelope} now give
$|\widehat\mu_{\tau,d}(\xi)|\le C_\tau u_N^{-1/\tau}$ for
$Q_{N-1}\le|\xi|<Q_N$, proving \eqref{eq:fixenv-tau-envelope}.
The common resonances follow again from
$\widehat\mu_{\tau,d}(Q_n/2)=t_n\widehat\nu_n(1/2)$ and
\eqref{eq:fixenv-tail-uniform}. The same resonance-interval argument proves
$\dim_{\mathrm F}^\theta\mu_{\tau,d}=\theta$, because $t_n$ is still a negative
power of $n$, while $Q_n$ grows exponentially in $n(\log n)^2$.
Finally, Proposition~\ref{prop:fixed-envelope-obstruction} shows that no critical exponent
outside $I_\tau$ is possible under the stated hypotheses.
\end{proof}

\subsection{Endpoint behavior with the same fixed data}

The endpoint can also be separated throughout the interior of every
optimal interval $I_\tau$ in Theorem~\ref{thm:fixed-envelope-general-tau}.
\begin{corollary}\label{cor:fixed-endpoint-separation}
For every $\tau>0$ and $p_0\in\operatorname{int}I_\tau$, there are two
singular continuous spectral Moran measures with the common scales,
digit cardinalities, exponential spectrum, maximum sequence and sharp Fourier envelope
of Theorem~\ref{thm:fixed-envelope-general-tau}, whose exact restriction
ranges are $[1,p_0]$ and $[1,p_0)$, respectively. Both satisfy
\eqref{eq:fixenv-dimensions} and have local dimension one at every support
point. For $\tau=2$, these are the fixed data of
Theorem~\ref{thm:fixed-envelope}, and $4/3<p_0<2$.
\end{corollary}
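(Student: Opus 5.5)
The plan is to reuse the construction in the proof of Theorem~\ref{thm:fixed-envelope-general-tau} verbatim, changing only the set of levels at which the repeated vector $B_n^{\rm r}$ is used. Fix $\tau>0$ and $p_0\in\operatorname{int}I_\tau$, and let $s_0=p_0/(p_0-1)$. Then $a_\tau<s_0<\tau+2$, so $d:=(s_0-2)/\tau\in(0,1)$, and the strict inequality $s_0>a_\tau\ge\tau$ makes the mean terms $\sum_nt_n^{s_0}$ converge. All the fixed data have already been shown in that proof to be independent of the choice of levels: the common scales, the cardinalities $M_n$, the common spectrum $\Lambda$ (each digit set is a complete residue system modulo $M_n$), the exact maxima $\eta_n=t_n$, the envelope \eqref{eq:fixenv-tau-envelope}, the resonances at $Q_n/2$, singular continuity, \eqref{eq:fixenv-dimensions}, and local dimension one. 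So only the endpoint behaviour needs to be arranged.

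For the measure with range $[1,p_0]$, I take the measure $\mu_{\tau,d}$ already built on $\mathcal A_d$. By \eqref{eq:fixenv-tau-exact}, $\sum_nG_{s_0,n}<\infty$ and $s_c=s_0$, so Corollary~\ref{cor:critical-index} gives attainment.

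For the range $[1,p_0)$, I replace $\mathcal A_d$ by the sparser-logarithm set
\[
\mathcal A_d'=\Bigl\{\bigl\lceil (j(\log(j+e))^{-K})^{1/d}\bigr\rceil\Bigr\},
\]
with $K$ a large constant depending on $s_0$ and $\tau$, taking only those $j$ that produce strictly increasing indices. Along these levels $u_n^{-d}\asymp j^{-1}(\log j)^{K}$. The lower bound in \eqref{eq:fixenv-tau-repeated} at $s=s_0$ is then $\gtrsim j^{-1}(\log j)^{K-2d(s_0-2)/d\cdot\,O(1)}$. Concretely, the log losses in that bound total $(\log u_n)^{2(s_0-2)}\lesssim(\log j)^{2(s_0-2)}$ up to constants depending on $d$. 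Choosing $K>2(s_0-2)+1$ therefore makes the series diverge, so the endpoint fails.

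For every $s>s_0$ the extra power $u_n^{-(s-s_0)/\tau}$ is a positive power of $j$, which beats any logarithm, so the non-mean terms converge. The long vectors at the other levels are summable by \eqref{eq:fixenv-tau-flat}. Hence $s_c=s_0$ with failure at $s_0$, and Corollary~\ref{cor:critical-index} gives exact range $[1,p_0)$. Both measures use digit sets drawn from the same two fixed families $B_n^{\rm r}$ and $B_n^{\rm f}$, so they share all the data.

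The main obstacle is the bookkeeping in the choice of $\mathcal A_d'$. The indices must still be strictly increasing; I will check this by discarding any repeated values, which only alters counting by bounded factors, since $(j(\log j)^{-K})^{1/d}$ grows at least like $j^{1/d-\epsilon}$ with $1/d>1$. The log exponent must also be large enough to overcome the $(\log u_n)^{2(s_0-2)}$ loss in the lower bound of \eqref{eq:fixenv-tau-repeated}. The case $\tau=2$ is just the specialization to the data of Theorem~\ref{thm:fixed-envelope}, where $I_2=[4/3,2]$.
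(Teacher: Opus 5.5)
Your proposal is correct and follows the paper's proof essentially verbatim. The endpoint-attaining member is $\mu_{\tau,d}$, and the endpoint-failing member places the repeated vectors on the logarithmically denser levels $\lceil (j(\log(j+e))^{-K})^{1/d}\rceil$ (the paper takes $K=b=2\tau+2$, and your $K>2(s_0-2)+1$ works equally well), with divergence at $s_0$ coming from the lower bound and convergence for $s>s_0$ from the upper bound in \eqref{eq:fixenv-tau-repeated}. Two blemishes, neither affecting validity: the exponent in your first displayed lower bound is garbled, but your next sentence correctly gives $(\log j)^{K-2(s_0-2)}/j$; and no discarding of repeated indices is needed, since the increments grow like $j^{1/d-1}(\log j)^{-K/d}\to\infty$, so the indices are strictly increasing for all large $j$.
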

\begin{proof}
Put $s_0=p_0/(p_0-1)$ and $d=(s_0-2)/\tau$. Then
$\max\{2,\tau\}<s_0<\tau+2$ and $0<d<1$.
The measure $\mu_{\tau,d}$ constructed in
Theorem~\ref{thm:fixed-envelope-general-tau} attains the endpoint.
For the second measure, retain the same two digit choices at every level,
put $b=2\tau+2$, and use the repeated vectors at
\[
 \widetilde n_j=
 \left\lceil\left(\frac{j}{(\log(j+e))^b}\right)^{1/d}\right\rceil,
 \qquad j\ge j_0(d,\tau),
\]
and the long vectors elsewhere. These indices are strictly increasing
for sufficiently large $j_0$, because $d<1$.

At $s=s_0$, the lower bound in \eqref{eq:fixenv-tau-repeated} gives
\[
 G_{s_0,\widetilde n_j}^{\rm r}-t_{\widetilde n_j}^{s_0}
 \gtrsim_{s_0,\tau}
 \frac{(\log j)^{b-2(s_0-2)}}j,
\]
whose sum diverges, since $b-2(s_0-2)>2$. For $s>s_0$, put
$a=(s-2)/\tau>d$. The upper
bound in the same estimate is
\[
 G_{s,\widetilde n_j}^{\rm r}-t_{\widetilde n_j}^{s}
 \lesssim_{s,\tau,d}
 j^{-a/d}(\log j)^{ba/d-3(s-2)/2},
\]
which is summable. The common mean terms are summable for $s\ge s_0$
because $s_0>\tau$, and the remaining long-vector contributions are
summable for every $s>2$. Monotonicity therefore shows that the full
moment series converges exactly for $s>s_0$.
Theorem~\ref{thm:exact-moment-criterion} gives the open endpoint range.
All other assertions in Theorem~\ref{thm:fixed-envelope-general-tau}
hold for every selection of its two digit choices, so the fixed data
and dimension conclusions are preserved.
\end{proof}

\section{Finite-field spectral models and exact thresholds}\label{sec:restriction}

Theorem~\ref{thm:exact-moment-criterion} now yields spectral examples with arbitrary critical restriction exponents. For tridiagonal digit systems, it reduces
to a scalar series for any scale sequence; logarithmic and secondary
logarithmic growth then determine the critical exponent and its endpoint.

\subsection{Arithmetic digit systems and common dimensions}

We use the arithmetic digit sets of Lai, Shi and Xie \cite{LSX}. Their one-level Weil estimate will also be used for the weighted dimension applications in Section~\ref{sec:weil}. Let $1\le r<q$, put $d=1+\lceil(q-r)/r\rceil$, and let $p>d$ be an odd prime. Let $\F_p$ be the finite field with $p$ elements and let $\F_{p^r}$ be its degree-$r$ extension; see, for example, \cite{LidlNiederreiter}. Since $\F_{p^r}$ is an $r$-dimensional vector space over $\F_p$, fix an ordered basis of $\F_{p^r}$ over $\F_p$. List the coordinate functions of $x,x^2,\ldots$ in this basis and denote their first $q$ entries by $g_0(x),\ldots,g_{q-1}(x)$, represented in $\{0,\ldots,p-1\}$. Define
\begin{equation}\label{eq:weil-digits}
B(p,r,q)=\left\{\sum_{j=0}^{q-1}g_j(x)p^j:x\in\F_{p^r}\right\}.
\end{equation}
For a finite digit set $B$, write
$M_B(t)=|B|^{-1}\sum_{b\in B}e^{-2\pi ibt}$ for its uniform mask.
The first $r$ digits make this map injective, so $|B(p,r,q)|=p^r$. Lai, Shi and Xie \cite[Theorem 3.1]{LSX} prove
\begin{equation}\label{eq:weil-bound}
\|M_{B(p,r,q)}-D_{p^q}\|_\infty
\le(d-1)p^{-r/2}\bigl(2\log(2p)\bigr)^q.
\end{equation}
Their proof uses the Weil bound for polynomial additive character sums; the estimate and the digit construction are taken from that work.

For every fixed odd prime $P$ and integers $q_n\ge2$, put
$r_n=q_n-1$, $R_n=P^{q_n}$, $M_n=P^{q_n-1}$, and use the
equal-weighted laws on $B(P,r_n,q_n)$. To verify spectrality, take
$L_n=P\{-(M_n-1)/2,\ldots,(M_n-1)/2\}$ and
$\Lambda_N=L_1+R_1L_2+\cdots+Q_{N-1}L_N$.
The residues of $B(P,r_n,q_n)$ modulo $M_n=P^{r_n}$ run through all classes once, so $(R_n,B_n,L_n)$ is a Hadamard triple. Also
$|\lambda|\le(Q_N-1)/2$ for $\lambda\in\Lambda_N$, by summing
$Q_{n-1}|\ell_n|\le(Q_n-Q_{n-1})/2$.
Since $C_n=P$, Lemma~\ref{lem:bounded-frame-tail} gives
$\inf_{N,\lambda\in\Lambda_N}|\widehat{\nu_N}(\lambda/Q_N)|>0$.
The completeness criterion \cite[Theorem~1.3]{LaiWang}, also recalled in
\cite[Theorem~6.2]{LSX}, shows that $\bigcup_N\Lambda_N$ is an exponential spectrum. No mask approximation estimate is needed for this argument.

\begin{lemma}\label{lem:fixed-prime-common}
Let $q_n\ge2$ be integers such that
$
 q_n\rightarrow\infty$ and $
 \frac{q_n}{\sum_{j\le n}q_j}\rightarrow0.
$
For any choice of ordered bases, let $\mu_{\mathbf q}$ be the
equal-weighted Moran measure with digit sets $B(P,q_n-1,q_n)$.
Then $\mu_{\mathbf q}$ is singular continuous and spectral, and
\[
 \dimF\mu_{\mathbf q}=0,\qquad
 D_q(\mu_{\mathbf q})=1\ (1<q\le\infty),\qquad
 \dim_{\mathrm F}^{\theta}\mu_{\mathbf q}=\theta\ (0\le\theta\le1).
\]
Moreover, uniformly for $x\in\supp\mu_{\mathbf q}$,
\[
 \lim_{r\downarrow0}\frac{\log\mu_{\mathbf q}(B(x,r))}{\log r}=1.
\]
In particular, the measure and its support have Hausdorff dimension one.
\end{lemma}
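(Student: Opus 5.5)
Spectrality is already established in the paragraph preceding the lemma: the triples $(R_n,B_n,L_n)$ are Hadamard, $C_n=P$, and Lemma~\ref{lem:bounded-frame-tail} together with \cite[Theorem~1.3]{LaiWang} shows that $\bigcup_N\Lambda_N$ is a spectrum. The remaining claims I would prove in the order of Step~3 and Step~5 of the proof of Theorem~\ref{thm:fixed-envelope}, with the two-parameter data $(R_n,M_n)=(P^{q_n},P^{q_n-1})$.

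Every nonempty level-$N$ cylinder has mass $(M_1\cdots M_N)^{-1}=P^N/Q_N$. These masses tend to zero, and the cylinders cover the support with total length $P^{-N}\to0$. Hence $\mu_{\mathbf q}$ is singular and continuous.

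For the local dimension, take $Q_N^{-1}\le r<Q_{N-1}^{-1}$. The argument of \eqref{eq:fixenv-ball-masses} gives
\[
P^N/Q_N\le\mu(B(x,r))\le(2rQ_N+3)P^N/Q_N\lesssim rP^N
\]
for $x$ in the support, and the upper bound holds for every center. Put $T_N=\log Q_N=\log P\sum_{j\le N}q_j$. The hypotheses give $q_N/\sum_{j\le N}q_j\to0$, hence $T_N/T_{N-1}\to1$. They also give $N/T_N\to0$, because $q_n\to\infty$. Therefore
\[
\frac{-\log\mu(B(x,r))}{-\log r}\to1
\]
uniformly on the support. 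The uniform upper bound yields $D_\infty=1$, and then $D_q=1$ for all $q>1$ by monotonicity. Full Hausdorff dimension of the measure and of its support follows from the mass distribution principle.

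For the Fourier data, I would evaluate $\widehat\mu$ at resonant frequencies $\xi=kQ_{n-1}$ with $1\le k\le R_n/2$. There all earlier masks equal one, so
\[
|\widehat\mu(kQ_{n-1})|=|m_n(k/R_n)|\,|\widehat\nu_n(k/R_n)|\ge c(P)\,|m_n(k/R_n)|
\]
by Lemma~\ref{lem:bounded-frame-tail}. The key choice is $k=M_n$ (or a suitable multiple of $P^{q_n-1}$). Since $B_n$ is a complete residue system modulo $M_n$, the mask at $M_n/R_n=1/P$ is $\frac1{M_n}\sum_{b\in B_n}e^{-2\pi i b/P}$. This is an average over the lowest base-$P$ digit $g_0(x)$, $x\in\F_{P^r}$. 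Because $g_0$ is a nonzero linear coordinate functional, it is equidistributed, so this particular sum vanishes. I would therefore instead use a frequency $k$ that detects a top digit $g_{q-1}(x)$, which is a coordinate of $x^{q_n}$. Here the main obstacle is to find, for each $n$, some nonzero grid frequency at which $|m_n(k/R_n)|\ge c_0>0$ with $c_0$ independent of $n$.

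My first attempt would be Parseval. The identity $G_{2,n}=C_n-1=P-1$ shows that the nonzero squared coefficients sum to $P-1$. If the mass spread out, all of them could be small, so Parseval alone gives no such $c_0$. The fallback is structural. Reducing modulo $P^{q_n-1}$ is a bijection on $B_n$, so $B_n$ is a union of translates $B_n=\{\beta(y)+P^{q_n-1}c(y)\}$, with $\beta$ running over the residues modulo $M_n$ and $c(y)\in\{0,\ldots,P-1\}$ the top digit. The grid frequencies $k\equiv0\pmod P$ then pick up Fourier coefficients of the function $c$. Since $\sum$ of $|m_n|^2$ over the $P-1$ frequencies $k\in\{jM_n\}$ alone equals the $L^2$ norm squared of the top-digit distribution minus one, I would try to show, using the character-sum structure, that the nonzero squared mass concentrates on at most $O_P(1)$ frequencies. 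Alternatively I would compute $m_n(1/P^{q_n})$-type coefficients directly.

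Once $|\widehat\mu(\xi_n)|\ge c>0$ along $\xi_n\to\infty$ is established, $\dimF\mu=0$ follows. The Fourier spectrum bound $\dim_{\mathrm F}^\theta\mu\ge\theta$ follows from finite $a/\theta$-energy for $a<\theta$. The reverse inequality follows from the resonance-interval argument of Step~5, using Lipschitz continuity of $\widehat\mu$ and intervals of length $\gtrsim1$ around each $\xi_n$. Alternatively, \cite[Corollary~1.6]{FraserSpectrum} with Sobolev dimension one and Fourier dimension zero gives it directly.
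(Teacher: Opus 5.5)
\textbf{Verdict: there is a genuine gap, in the Fourier part.} Your treatment of spectrality, singular continuity, the two-sided ball-mass bounds, local dimension one, $D_q=1$ and full Hausdorff dimension matches the paper's argument. The Fourier part, however, rests on a target that is false.

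\textbf{Why the target fails.} You aim to find, for each $n$, a nonzero grid frequency with $|m_n(k/R_n)|\ge c_0$, where $c_0$ does not depend on $n$. You then want $|\widehat\mu(\xi_n)|\ge c>0$ along $\xi_n\to\infty$. This cannot hold for every admissible basis, so the approach cannot prove the lemma as stated.
\begin{itemize}
\item For the tridiagonal bases of Lemma~\ref{lem:q41-bases}, Proposition~\ref{prop:q41-path} gives $\max_{1\le k<R_n}|m_n(k/R_n)|\le PR_n^{-1/2}\to0$.
\item When $\rho_P<1$, the Weil bound \eqref{eq:weil-bound} gives $\eta_n\le\maskerr_n\to0$ for every basis. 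Proposition~\ref{prop:profile-envelope} then makes $\widehat\mu$ Rajchman.
\item A constant lower bound along $\xi_n\to\infty$ would also contradict Theorem~\ref{thm:tridiagonal-arbitrary} with $q_n=n+2$. There $\widehat\mu\in L^s$ for every $s>2$, which together with Lipschitz continuity forces $\widehat\mu(\xi)\to0$.
\item For the same reason, the Fourier mass $P-1$ cannot concentrate on $O_P(1)$ frequencies.
\item Your structural fallback looks in the wrong place. For every basis, $B_n$ is a complete residue system modulo $M_n$, so $m_n(k/R_n)=0$ whenever $P\mid k$ and $R_n\nmid k$. All the nonzero mass sits on $k\not\equiv0\pmod P$.
\end{itemize}

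\textbf{The missing observation.} No uniform lower bound is needed, and the Parseval step you discarded already suffices.
\begin{itemize}
\item From $\sum_{k=1}^{R_n-1}|m_n(k/R_n)|^2=P-1$ and conjugate symmetry, some $1\le k_n\le R_n/2$ satisfies $|m_n(k_n/R_n)|\ge((P-1)/(R_n-1))^{1/2}$.
\item Lemma~\ref{lem:bounded-frame-tail} then gives $|\widehat\mu(\xi_n)|\gtrsim_P R_n^{-1/2}$ with $\xi_n=k_nQ_{n-1}\in[Q_{n-1},Q_n/2]$.
\item The hypothesis $q_n/\sum_{j\le n}q_j\to0$ says exactly that $\log R_n=o(\log Q_{n-1})$. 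Hence $R_n^{-1/2}=\xi_n^{-o(1)}$, which excludes every positive power decay and gives $\dimF\mu=0$. This is the point where that hypothesis enters the Fourier part.
\item For the spectrum, the mass bounds give finite $t$-energy for $t<1$, and singularity excludes more, so $\dimS\mu=1$. Then \cite[Corollary~1.6]{FraserSpectrum} applies.
\end{itemize}
Your alternative resonance-interval argument, using intervals of length $\gtrsim1$, also relies on the false constant lower bound. It can be repaired with intervals of length $\asymp R_n^{-1/2}$ around $\xi_n$. Their contribution is $\gtrsim R_n^{-1/2-1/\theta}\xi_n^{a/\theta-1}$, which tends to infinity for $a>\theta$ because $R_n=\xi_n^{o(1)}$.
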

\begin{proof}
Write $T_N=(\log P)\sum_{j\le N}q_j$. Every nonempty level-$N$
cylinder has mass $P^N/Q_N$, and their total length is $P^{-N}$.
Thus the measure is singular and continuous. The preceding Hadamard
argument proves spectrality.

Choose $N$ with $Q_N^{-1}\le r<Q_{N-1}^{-1}$. A cylinder containing
$x\in\supp\mu_{\mathbf q}$ lies in $B(x,r)$, up to zero-mass endpoints,
while the ball meets at most $2rQ_N+3$ grid intervals. Hence
\begin{equation}\label{eq:fixed-prime-common-mass}
 \frac{P^N}{Q_N}\le\mu_{\mathbf q}(B(x,r))\le5rP^N.
\end{equation}
The upper bound holds for every center in $\R$. The hypotheses imply
$N/T_{N-1}\to0$ and $T_N/T_{N-1}\to1$. Setting $L=-\log r$ gives
\[
 1-\frac{N\log P+\log5}{L}
 \le\frac{-\log\mu_{\mathbf q}(B(x,r))}{L}
 \le\frac{T_N-N\log P}{T_{N-1}}\longrightarrow1.
\]
This proves the local-dimension assertion. The upper mass bound also
gives $D_\infty=1$, hence $D_q=1$ for all $q>1$ and Hausdorff
dimension one for the measure and its support.

Discrete Parseval gives
$\sum_{k=1}^{R_n-1}|m_n(k/R_n)|^2=P-1$. Hence, by conjugate
symmetry, some $1\le k_n\le R_n/2$ satisfies
$|m_n(k_n/R_n)|\ge((P-1)/(R_n-1))^{1/2}$.
Lemma~\ref{lem:bounded-frame-tail} then gives, with
$\xi_n=k_nQ_{n-1}$,
\[
 |\widehat\mu_{\mathbf q}(\xi_n)|\gtrsim_P R_n^{-1/2}.
\]
Since $\log R_n=o(\log Q_n)$, we have
$\log\xi_n\sim\log Q_n$ and no positive power decay is possible.
Thus $\dimF\mu_{\mathbf q}=0$. The mass bounds imply finite
$t$-energy for every $t<1$, while singularity excludes Sobolev dimension
greater than one. Therefore $\dimS\mu_{\mathbf q}=1$, and
\cite[Corollary~1.6]{FraserSpectrum} gives the stated Fourier spectrum.
\end{proof}

\subsection{Exact moments and arbitrary scale sequences}

For a single level let $r=q-1$ and $V=\F_{P^r}$. If
$\ell:V\to\F_P$ is the first coordinate functional, then
$\ell(x)=\operatorname{Tr}(\beta x)$ for some $\beta\ne0$;
here $\operatorname{Tr}$ is the trace from $V$ to $\F_P$;
see \cite[Lemma~2.2]{LSX}.
The last digit is the quadratic form $\mathcal Q(x)=\ell(x^2)$,
with associated nondegenerate symmetric bilinear form
$
 B_\beta(x,y)=\operatorname{Tr}(\beta xy).
$

We choose the basis so that this quadratic form has a tridiagonal matrix.
The resulting grid estimate will also provide a uniform mask bound for
every odd prime $P$.

\begin{lemma}
\label{lem:q41-bases}
For every $r\ge2$, there exists a basis $e_0,\ldots,e_{r-1}$ of $V$
such that the first coordinate of
$(\sum_jx_je_j)^2$ has the form
\begin{equation}\label{eq:q41-path-form}
 \mathcal Q(x_0,\ldots,x_{r-1})
 =\sum_{j=0}^{r-1}d_jx_j^2
     +2\sum_{j=0}^{r-2}b_jx_jx_{j+1},
 \qquad b_j\in\F_P\setminus\{0\}.
\end{equation}
Such bases are admissible in the digit construction \eqref{eq:weil-digits}.
\end{lemma}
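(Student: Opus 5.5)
We will phrase everything in terms of the symmetric bilinear form $B_\beta(x,y)=\operatorname{Tr}(\beta xy)$ and choose the basis by a Lanczos-type chain construction. Fix an ordered basis $e_0,\dots,e_{r-1}$ with coordinates $x=\sum_jx_je_j$. Its first coordinate functional is $x\mapsto x_0$, and we write it as $\ell(x)=\operatorname{Tr}(\beta x)=B_\beta(1,x)$. The first coordinate of $x^2$ is then
\[
\ell(x^2)=\sum_{i,j}x_ix_jB_\beta(e_i,e_j).
\]
So \eqref{eq:q41-path-form} says exactly two things: the Gram matrix $(B_\beta(e_i,e_j))$ is tridiagonal, and its off-diagonal entries $b_j=B_\beta(e_j,e_{j+1})$ are nonzero. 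The factor $2$ is harmless since $P$ is odd.

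The catch is that $\beta$ is not free: it is determined by the basis. We therefore reverse the order of choices. First fix any $\beta\neq0$. The form $B_\beta$ is nondegenerate, because the trace form is. We then construct $e_0,\dots,e_{r-1}$ subject to the normalization
\[
B_\beta(1,e_0)=1,\qquad B_\beta(1,e_j)=0\quad(1\le j<r).
\]
This normalization says precisely that $x\mapsto x_0$ equals $B_\beta(1,\cdot)=\operatorname{Tr}(\beta\,\cdot)$, so the form produced is the correct $\mathcal Q$. Admissibility in \eqref{eq:weil-digits} is then automatic: that construction allows an arbitrary ordered basis, and with $q=r+1$ the last digit is exactly $\ell(x^2)$.

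The construction is inductive. Choose $e_0$ with $\ell(e_0)=1$, and generically, so that $e_0\notin\F_P\cdot1$. Having chosen $e_0,\dots,e_j$, choose $e_{j+1}$ in the subspace
\[
W_j=\{v:\ B_\beta(1,v)=0,\ B_\beta(e_i,v)=0\ (i<j)\},
\]
subject to two further requirements: $B_\beta(e_j,v)\neq0$, and $v\notin\operatorname{span}(e_0,\dots,e_j)$. Orthogonality to $e_i$ for $i<j$ makes the Gram matrix tridiagonal, and the first requirement gives $b_j\ne0$. Since $B_\beta$ is nondegenerate, $W_j^{\perp}=\operatorname{span}(1,e_0,\dots,e_{j-1})$. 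Hence the linear functional $B_\beta(e_j,\cdot)$ is nonzero on $W_j$ exactly when
\[
e_j\notin\operatorname{span}(1,e_0,\dots,e_{j-1}).
\]
So the inductive invariant to carry is that $1,e_0,\dots,e_j$ are linearly independent. At each step $v$ must avoid a hyperplane of $W_j$ and a proper subspace, namely $W_j\cap\operatorname{span}(1,e_0,\dots,e_j)$ enlarged by whatever would break the invariant. While $\dim W_j\ge2$, a counting argument over $\F_P$ with $P\ge3$ shows that a valid $v$ exists.

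The main obstacle is the end of the induction. Once $j+2=r$, the invariant "$1,e_0,\dots,e_{j+1}$ independent" can no longer hold, since these are $r+1$ vectors in an $r$-dimensional space. In addition, $\dim W_{r-2}$ may be as small as $1$. In that case $e_{r-1}$ is forced up to scaling, and both requirements must be checked for that specific line. My first attempt is to weaken the invariant at the last step: we only need $e_{r-2}\notin\operatorname{span}(1,e_0,\dots,e_{r-3})$ and $e_{r-1}\notin\operatorname{span}(e_0,\dots,e_{r-2})$. I would show these by a dimension count, using $W_{r-2}\cap\operatorname{span}(e_0,\dots,e_{r-2})=0$, which follows from nondegeneracy restricted to the chain. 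If that count fails for some choices, the fallback is to exploit the freedom in $e_0$ and $\beta$. Either redo the step with a different generic $e_0$, or note that a bad configuration forces an algebraic condition on $(\beta,e_0)$, which can be avoided since there are $P^r-1$ choices of $\beta$. The case $r=2$ should be checked by hand as the base case.
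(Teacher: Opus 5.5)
Your chain (Lanczos-type) construction is a genuinely different route from the paper's, and it can be completed. As written, however, it is incomplete exactly where you flag the difficulty, and the reason you give there is not the one that actually works. Two facts are needed, and neither is proved.

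At an intermediate step ($j\le r-3$) you need $W_j\not\subseteq\operatorname{span}(1,e_0,\dots,e_j)$. A counting argument only helps once the set to be avoided is known to be a proper subspace of $W_j$. Since $\dim W_j=r-j-1$ and $\dim\operatorname{span}(1,e_0,\dots,e_j)=j+2$, dimension alone does not exclude containment when $r\le 2j+3$.

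At the last step you need $W_{r-2}\cap\operatorname{span}(e_0,\dots,e_{r-2})=0$, which you attribute to ``nondegeneracy restricted to the chain''. But $B_\beta$ restricted to $\operatorname{span}(e_0,\dots,e_{r-2})$ need not be nondegenerate: for $r=3$ one can have $d_0d_1=b_0^2$. Even where it is nondegenerate, that would not by itself give the claim. The fallback of varying $e_0$ or $\beta$ is left undeveloped.

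The missing idea is a short recursion that uses only $b_i\ne0$ and the normalization $B_\beta(1,e_0)=1$, $B_\beta(1,e_k)=0$ for $k\ge1$.

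\emph{Last step.} Suppose $v=\sum_{k\le r-2}c_ke_k$ is orthogonal to $1,e_0,\dots,e_{r-3}$. Then $0=B_\beta(1,v)=c_0$. The equations $B_\beta(e_i,v)=b_{i-1}c_{i-1}+d_ic_i+b_ic_{i+1}=0$ for $0\le i\le r-3$ (first term absent for $i=0$) then force $c_1=\dots=c_{r-2}=0$ in turn.

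\emph{Intermediate step.} Suppose $w=a\cdot1+\sum_{k\le j}c_ke_k$ is orthogonal to $1,e_0,\dots,e_{j-1}$. Then $c_0=-a\operatorname{Tr}\beta$. Each further equation $B_\beta(w,e_i)=0$, $0\le i\le j-1$, is linear in $c_{i+1}$ with coefficient $b_i\ne0$, and otherwise involves only $a$ and lower-index coefficients. So every coefficient is determined by $a$, and $W_j\cap\operatorname{span}(1,e_0,\dots,e_j)$ has dimension at most one. It is therefore proper in $W_j$, because $\dim W_j\ge2$. Since a vector space is never the union of two proper subspaces, a valid $e_{j+1}$ exists.

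With this, your induction never breaks down for any $\beta\ne0$, no fallback is needed, and it produces many admissible bases.

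The paper avoids all genericity instead. It takes $\operatorname{Tr}\beta=1$, so that $V=\F_P\cdot1\perp W$ with $W=\ker\ell$ nondegenerate. It then builds an explicit tridiagonal basis of $W$ from an orthogonal basis $u_1,\dots,u_{r-1}$ of $W$: $e_j=u_j+u_{j+1}$ for $1\le j<r-1$ and $e_{r-1}=u_{r-1}$, so that $B_\beta(e_j,e_{j+1})=B_\beta(u_{j+1},u_{j+1})\ne0$. Finally it sets $e_0=1+v$, with $v\in W$ chosen by nondegeneracy so that $B_\beta(v,e_1)=1$ and $B_\beta(v,e_j)=0$ for $j\ge2$. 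Independence and the normalization of $\ell$ are then immediate, since $e_0\notin W$ and $\ell(e_0)=\operatorname{Tr}\beta=1$. Your route buys freedom in $\beta$ (including $\operatorname{Tr}\beta=0$) at the cost of the non-breakdown verification above, which the paper's explicit formula makes unnecessary.
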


\begin{proof}
Choose $\beta\in V$ with $\operatorname{Tr}(\beta)=1$, which is possible because
the trace map is onto. Put
$W=\{x:B_\beta(1,x)=0\}$. Since $B_\beta(1,1)=1$, the restriction
of $B_\beta$ to $W$ is nondegenerate, and
$V=\F_P1\mathbin\perp W$.

For \eqref{eq:q41-path-form}, start with an orthogonal basis
$u_1,\ldots,u_{r-1}$ of $W$, with
$a_j=B_\beta(u_j,u_j)\ne0$. Define
$
 e_j=u_j+u_{j+1}\quad(1\le j<r-1)$ and $ e_{r-1}=u_{r-1}.$
These vectors form a basis of $W$. Their Gram matrix is tridiagonal
and $B_\beta(e_j,e_{j+1})=a_{j+1}\ne0$.
Choose $v\in W$ with
$
 B_\beta(v,e_1)=1,~~
 B_\beta(v,e_j)=0~(2\le j\le r-1),
$
and set $e_0=1+v$. Nondegeneracy on $W$ guarantees the existence
of $v$. We have $\ell(e_0)=1$ and $\ell(e_j)=0$ for $j\ge1$,
where $\ell(x)=B_\beta(1,x)$. Thus $\ell$ is exactly the first
coordinate functional in this basis. Moreover
$B_\beta(e_0,e_1)=1$, and $B_\beta(e_0,e_j)=0$ for $j\ge2$.
This proves \eqref{eq:q41-path-form}.
\end{proof}

\begin{proposition}
\label{prop:q41-path}
Use a basis from \eqref{eq:q41-path-form}, or $e_0=1$ when $r=1$,
and let $B\subset\{0,\ldots,R-1\}$ be its digit set, where
$R=P^{r+1}$ and $|B|=P^r$. For its uniform
mask $m$, write $G_s=\sum_{k=1}^{R-1}|m(k/R)|^s$. Then
\begin{equation}\label{eq:q41-path-grid}
 \max_{1\le k<R}|m(k/R)|\le P R^{-1/2}.
\end{equation}
Consequently, for every fixed $s>2$,
\begin{equation}\label{eq:q41-path-moments}
 \sum_{k=1}^{R-1}|m(k/R)|^s\asymp_{P,s}R^{1-s/2}.
\end{equation}

Moreover,
\begin{equation}\label{eq:q41-path-continuous}
 \|m-D_R\|_\infty\lesssim P R^{-1/2}\log(2R).
\end{equation}
\end{proposition}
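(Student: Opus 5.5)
The plan is to compute the grid values $m(k/R)$ exactly as normalized quadratic Gauss sums over $V=\F_{P^r}$, using the tridiagonal form \eqref{eq:q41-path-form}. The moment bounds and the continuous bound will then follow from earlier tools.

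\emph{Reduction to character sums.} A digit is $b(x)=\sum_{j<r}x_jP^j+\mathcal Q(x)P^r$, where $x_j$ are the coordinates of $x$. Write $k=k_0+P^r k'$ with $0\le k_0<P^r$ and $0\le k'<P$. The frequency $k/R$ then gives
\[
 m(k/R)=P^{-r}\sum_{x\in\F_P^r}\exp\Bigl(-2\pi i\Bigl(\tfrac{k\sum_j x_jP^j}{P^{r+1}}+\tfrac{k'\mathcal Q(x)}{P}\Bigr)\Bigr).
\]
This exponent is not additive in the coordinates, because the carries between base-$P$ places mix them.

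\emph{Case split on $v=v_P(k)$.} If $v=r$, so $k=k'P^r$ with $k'\ne0$, the linear part contributes $e(k'x_0/P)$ and the quadratic part $e(k'\mathcal Q(x)/P)$. The sum is then a genuine quadratic Gauss sum over $\F_P^r$ with nondegenerate form; nondegeneracy holds because $\det$ of the tridiagonal matrix is nonzero, since $B_\beta$ is nondegenerate. Hence $|m|=P^{-r/2}\le PR^{-1/2}$.

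If $v<r$, write $k=P^v k_1$ with $P\nmid k_1$. Then $x_j$ for $j\ge r-v+1$ drop out of the linear phase modulo $1$. The dependence on the top coordinates $x_{r-v},\dots,x_{r-1}$ passes only through $\mathcal Q$, which is multiplied by $k'$ depending on $k$. Here the tridiagonal structure is used: summing over the last free coordinate $x_{r-1}$ produces either a Gauss sum in $x_{r-1}$, when $d_{r-1}k'\ne0$, or a linear character sum forcing the constraint $b_{r-2}x_{r-2}=\dots$. Proceeding down the path (peeling one coordinate at a time, as one diagonalizes a path graph), each peeled coordinate contributes a factor of modulus at most $P^{1/2}$, or imposes a linear constraint saving a full factor $P$. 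The lower coordinates carry a non-vanishing linear phase $e(k_1\cdot/P^{r+1-v})$ coming from a genuinely $P$-adic exponential. At least one such coordinate gives a sum of modulus at most $P$ times a geometric-series bound. The result I aim for is $|m(k/R)|\le P\cdot P^{-(r+1)/2}=PR^{-1/2}$ in all cases. This bookkeeping over $v$, with carries handled by summing over the top $x_{r-v}$ first, is the main obstacle. If it gets messy, I would instead bound by Cauchy–Schwarz in the top $v$ coordinates, using orthogonality and the nonzero off-diagonal $b_j$ to make the differenced phase linear and nonconstant.

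\emph{Moments and continuous bound.} Given \eqref{eq:q41-path-grid}, discrete Parseval gives $\sum_{k\ne0}|m(k/R)|^2=R/|B|-1=P-1$. So $G_s\le (PR^{-1/2})^{s-2}(P-1)$, which is the upper half of \eqref{eq:q41-path-moments}. For the lower half I use H\"older with at most $R-1$ terms: $G_s\ge (R-1)^{1-s/2}(P-1)^{s/2}$. Both bounds are $\asymp_{P,s}R^{1-s/2}$. For \eqref{eq:q41-path-continuous}, I apply Fourier interpolation to the trigonometric polynomial $m-D_R$, which has frequencies $0,\dots,-(R-1)$ and vanishes at $k=0$ and coincides with $m$ at the other grid points. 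This gives $\|m-D_R\|_\infty\le \max_{k\ne0}|m(k/R)|\cdot\sup_t\sum_k|D_R(t-k/R)|\lesssim PR^{-1/2}\log(2R)$, using the bound established before \eqref{eq:fixenv-continuous-mask}.
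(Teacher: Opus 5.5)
Your derivation of \eqref{eq:q41-path-moments} from Parseval and H\"older, and of \eqref{eq:q41-path-continuous} from Fourier interpolation, is correct and is the paper's argument. The gap is the grid bound \eqref{eq:q41-path-grid}, which carries all the content, and your route to it starts from a miscomputed phase.

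Since $b(x)=\sum_{j<r}x_jP^j+P^r[\mathcal Q(x)]_P$ is an honest integer, one has exactly $e^{-2\pi ikb(x)/R}=e_P(-c\,\mathcal Q(x))\prod_{j<r}z_j^{x_j}$. Here $e_P(t)=e^{2\pi it/P}$, $c\equiv k\pmod P$ and $z_j=e^{-2\pi ikP^j/R}$. There are no carries. Because $kP^r/P^{r+1}=k/P$, the quadratic form is multiplied by the \emph{lowest} base-$P$ digit of $k$, not by the top digit $k'$.

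Your case split is therefore misplaced. If $P\mid k$ (your case $v=r$ and every $1\le v<r$), the quadratic factor is identically $1$. The linear part then runs over a complete residue system modulo $P^r$, so $m(k/R)=0$; for $v=r$ there is no Gauss sum of modulus $P^{-r/2}$. The only nontrivial case is $P\nmid k$. There every coordinate carries both the quadratic phase with coefficient $c\ne0$ and a twist $z_j^{x_j}$. Since $z_j$ is a primitive $P^{r+1-j}$-th root of unity, $a\mapsto z_j^a$ is not a character of $\F_P$.

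In that case, peeling one coordinate at a time with pointwise bounds does not give what you claim. The partial sum over one coordinate has the form $\sum_{a=0}^{P-1}e_P\bigl(-c(d_ja^2+2ya)\bigr)z_j^a$, with $y\in\F_P$ collecting the neighbouring coordinates. It is not a complete Gauss sum, and it imposes no exact linear constraint. If $d_j=0$, which \eqref{eq:q41-path-form} allows, it is a geometric series of modulus at least $2P/\pi$ for a suitable $y$. So ``each peeled coordinate contributes at most $P^{1/2}$'' is false. Completing each sum instead costs a factor of order $\log P$ per coordinate, which for fixed $P$ is a power of $R$ (compare $(2\log(2p))^q$ in \eqref{eq:weil-bound}). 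That loss would spoil the exact exponent in \eqref{eq:q41-path-moments}.

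The missing idea is to peel in $\ell^2$ rather than pointwise. The path structure writes the unnormalized sum as $\mathbf 1^TA_0T_0A_1T_1\cdots T_{r-2}A_{r-1}\mathbf 1$. The diagonal unitaries $A_j=\operatorname{diag}\bigl(e_P(-cd_ja^2)z_j^a\bigr)_{a\in\F_P}$ absorb all the non-character twists. The matrices $T_j=\bigl(e_P(-2cb_jab)\bigr)_{a,b\in\F_P}$ satisfy $T_jT_j^*=PI$, since $2cb_j\ne0$. Operator norms then bound the sum by $\|\mathbf 1\|_2^2\prod_{j=0}^{r-2}\|T_j\|=P^{(r+1)/2}$, i.e.\ $|m(k/R)|\le P^{(1-r)/2}=PR^{-1/2}$. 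For $r=1$ this is just the trivial bound $|m|\le1$.
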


\begin{proof}
Put $M=P^r$, and represent each coordinate by an integer in
$\{0,\ldots,P-1\}$. The digit attached to $x$ is
$
 b(x)=\sum_{j=0}^{r-1}x_jP^j+M[\mathcal Q(x)]_P.
$
Here $[a]_P$ denotes the representative of $a\in\F_P$ in
$\{0,\ldots,P-1\}$.
If $P\mid k$ and $k\not\equiv0\pmod R$, the last coordinate
contributes one and the remaining sum is the nonzero Fourier
coefficient of the uniform law on $\{0,\ldots,M-1\}$. Thus
$m(k/R)=0$.

Suppose $c=k\bmod P\ne0$. Apart from normalization by $P^r$, the
mask is
\begin{equation}\label{eq:q41-path-sum}
 \sum_{x\in\F_P^r}
 {e}_P(-c\mathcal Q(x))\prod_{j=0}^{r-1}z_j^{x_j},
 \qquad z_j={e}^{-2\pi i kP^j/R},
 \quad{e}_P(t)={e}^{2\pi i t/P}.
\end{equation}
For adjacent coordinates let
$T_j=({e}_P(-2cb_jab))_{a,b\in\F_P}$. Since $2cb_j\ne0$, we have $
 T_jT_j^*=P I $ and $ \|T_j\|_{2\to2}=\sqrt P.$
Define the diagonal unitary matrices
\[
 A_j=\operatorname{diag}\bigl(e_P(-cd_j a^2)z_j^a\bigr)_{a\in\F_P},
 \qquad 0\le j<r.
\]
For $r\ge2$, the sum in \eqref{eq:q41-path-sum} is exactly
$
 \mathbf1^{T}A_0T_0A_1T_1\cdots T_{r-2}A_{r-1}\mathbf1,
$
where $\mathbf1$ is the all-ones vector in $\mathbb C^P$. Its modulus is
at most $\|\mathbf1\|_2^2\prod_{j=0}^{r-2}\|T_j\|=P^{(r+1)/2}$.
After normalization this gives $P^{(1-r)/2}=P R^{-1/2}$.
The case $r=1$, with its basis $e_0=1$, also satisfies this bound.

Finally, discrete Parseval gives $G_2=P-1$. Hence
$
 G_s\le(P-1)(P R^{-1/2})^{s-2},
$
and H\"older gives the matching lower estimate
$G_s\ge(P-1)^{s/2}(R-1)^{1-s/2}$.

Apply the Fourier interpolation estimate used in
\eqref{eq:fixenv-continuous-mask} to $m-D_R$. Its value at zero is
zero, and all other grid values are bounded by $P R^{-1/2}$.
The bound $\sup_t\sum_{k=0}^{R-1}|D_R(t-k/R)|\lesssim\log(2R)$
proves \eqref{eq:q41-path-continuous}.
\end{proof}

\begin{theorem}\label{thm:tridiagonal-arbitrary}
Fix any odd prime $P$ and any sequence of integers $q_n\ge2$.
At each level use the digit system in Proposition~\ref{prop:q41-path},
with $r_n=q_n-1$ and $R_n=P^{q_n}$. The resulting measure
$\mu_{\mathbf q}^{\rm tr}$ is singular continuous and spectral. For every
$s>2$,
\begin{equation}\label{eq:tridiagonal-arbitrary-criterion}
 \begin{split}
 L^2(\mu_{\mathbf q}^{\rm tr})\to L^s(\R)\text{ extension holds}
 &\quad\Longleftrightarrow\quad
 \widehat\mu_{\mathbf q}^{\rm tr}\in L^s(\R)\\
 &\quad\Longleftrightarrow\quad
 \sum_{n=1}^{\infty}P^{-q_n(s/2-1)}<\infty.
 \end{split}
\end{equation}
No growth or monotonicity condition on $(q_n)$ is required.
If in addition $q_n\to\infty$ and $q_n/\sum_{j\le n}q_j\to0$,
all the dimension conclusions of Lemma~\ref{lem:fixed-prime-common} hold.
\end{theorem}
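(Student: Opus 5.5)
The plan is to reduce everything to Theorem~\ref{thm:exact-moment-criterion}, with the scalar series supplied by Proposition~\ref{prop:q41-path}.

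\emph{Step 1: hypotheses of the criterion.} At level $n$ the digit set $B(P,q_n-1,q_n)$ has cardinality $M_n=P^{q_n-1}$ inside $\{0,\ldots,R_n-1\}$ with $R_n=P^{q_n}$, and the weights are equal. Hence
\[
C_n=R_n/M_n=P,
\]
so $\sup_nC_n=P<\infty$ for every sequence $(q_n)$. This is the only hypothesis of Theorem~\ref{thm:exact-moment-criterion}. That theorem then gives, for every $s>2$, that extension $L^2(\mu)\to L^s(\R)$, the condition $\widehat\mu\in L^s(\R)$, and $\sum_nG_{s,n}<\infty$ are all equivalent.

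\emph{Step 2: identify the series.} Proposition~\ref{prop:q41-path} applies at each level with $r=r_n=q_n-1\ge1$. The basis comes from Lemma~\ref{lem:q41-bases} when $r_n\ge2$, and $e_0=1$ when $r_n=1$. The proposition gives
\[
G_{s,n}\asymp_{P,s}R_n^{1-s/2}=P^{-q_n(s/2-1)}.
\]
The implicit constants depend only on $P$ and $s$, not on $n$: the upper bound is $(P-1)(PR^{-1/2})^{s-2}$ and the lower bound is $(P-1)^{s/2}(R-1)^{1-s/2}\ge(P-1)^{s/2}R^{1-s/2}$. So the comparison is uniform in $n$ and may be summed termwise. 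This proves \eqref{eq:tridiagonal-arbitrary-criterion}. Nothing in this argument uses growth or monotonicity of $(q_n)$, which is why no such condition is needed.

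\emph{Step 3: structural claims.} Singularity, continuity and spectrality follow from arguments already given for the finite-field measures.
\begin{itemize}
\item[] Singular continuity: level-$N$ cylinders have mass $P^N/Q_N\to0$ and total length $P^{-N}\to0$. This needs only $R_n/M_n=P\ge3$.
\item[] Spectrality: the residues of $B_n$ modulo $M_n$ form a complete system, so $(R_n,B_n,L_n)$ with $L_n=P\{-(M_n-1)/2,\ldots,(M_n-1)/2\}$ is a Hadamard triple. The spectrum elements satisfy $|\lambda|\le(Q_N-1)/2$. Lemma~\ref{lem:bounded-frame-tail} with $C_0=P$ bounds the tails uniformly from below, and \cite[Theorem~1.3]{LaiWang} then gives completeness.
\end{itemize}
Neither argument depends on the choice of basis or on growth of $(q_n)$. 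Under the extra hypotheses $q_n\to\infty$ and $q_n/\sum_{j\le n}q_j\to0$, the proof of Lemma~\ref{lem:fixed-prime-common} carries over verbatim. It uses only cylinder masses, the Parseval identity $\sum_k|m_n(k/R_n)|^2=P-1$, and the tail lower bound. It is stated for any choice of ordered bases, so it includes the tridiagonal ones.

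\emph{Main obstacle.} The only delicate point is the uniformity of the two-sided moment comparison when $r_n=1$ or $q_n$ stays bounded along a subsequence. In that case $R_n$ is bounded and the grid maximum $PR_n^{-1/2}$ may not be small, so the stability regime of Corollary~\ref{cor:q41-one-level} is not reached at those levels. Two cases cover this.
\begin{itemize}
\item[] If $\sum_nG_{s,n}<\infty$, then $G_{s,n}\to0$ forces $q_n\to\infty$, so the corollary applies for large $n$. The finitely many remaining levels are handled inside Theorem~\ref{thm:exact-moment-criterion} by $K_n=C_n$.
\item[] If $q_n$ is bounded along a subsequence, both sides of the criterion fail: the series has infinitely many terms bounded below, and the necessity direction of the theorem shows $\widehat\mu\notin L^s(\R)$.
\end{itemize}
So both sides of \eqref{eq:tridiagonal-arbitrary-criterion} behave consistently without any growth assumption.
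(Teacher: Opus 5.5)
Your proposal is correct and is essentially the paper's proof: $C_n=P$, Proposition~\ref{prop:q41-path} gives $G_{s,n}\asymp_{P,s}P^{-q_n(s/2-1)}$ with constants uniform in $n$, Theorem~\ref{thm:exact-moment-criterion} yields the equivalences, the Hadamard-triple and tail-bound argument gives spectrality, the level-$N$ cylinder cover gives singular continuity, and Lemma~\ref{lem:fixed-prime-common} supplies the dimension conclusions. The ``main obstacle'' you discuss is already handled inside the proof of Theorem~\ref{thm:exact-moment-criterion}, since summability forces $\eta_n\to0$ and the necessity direction needs no smallness, so that paragraph is harmless but unnecessary.
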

\begin{proof}
The one-level frame bounds are $C_n=P$. Proposition~\ref{prop:q41-path}
gives $G_{s,n}\asymp_{P,s}P^{-q_n(s/2-1)}$, so
Theorem~\ref{thm:exact-moment-criterion} proves
\eqref{eq:tridiagonal-arbitrary-criterion}.
The Hadamard and tail argument above gives spectrality for every scale
sequence. The level-$N$ cylinder cover has total length $P^{-N}$, and
each cylinder has mass at most $P^{-N}$ because $M_n\ge P$.
Thus the measure is singular and continuous. The last assertion is
Lemma~\ref{lem:fixed-prime-common}.
\end{proof}

The exact series criterion separates the effect of leading logarithmic
growth from the secondary term that decides the endpoint.
\begin{corollary}\label{thm:tridiagonal-logarithmic}
Fix an odd prime $P$, $A>0$ and $B\in\R$, and choose integers $q_n\ge2$
such that, as $n\to\infty$,
$
 q_n=A\log n+B\log\log n+O(1).
$
For the tridiagonal measure of Theorem~\ref{thm:tridiagonal-arbitrary},
\begin{equation}\label{eq:q41-positive-result}
 p_{\rm res}(\mu_{\mathbf q}^{\rm tr})
 =p_*(A,P):=\frac{2A\log P+2}{A\log P+2}.
\end{equation}
Its exact restriction range is $[1,p_*(A,P)]$ if $B>A$, and
$[1,p_*(A,P))$ if $B\le A$. In particular,
$q_n=\lceil A\log n\rceil+2$ gives endpoint failure.
\end{corollary}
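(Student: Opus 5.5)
The plan is to reduce everything to the scalar series in Theorem~\ref{thm:tridiagonal-arbitrary}. Set $\gamma=s/2-1>0$ and $L=\log P$. Then
\[
P^{-q_n\gamma}=\exp\bigl(-\gamma L(A\log n+B\log\log n+O(1))\bigr)\asymp_{s} n^{-\gamma LA}(\log n)^{-\gamma LB}.
\]
Here the $O(1)$ error only changes the terms by bounded multiplicative factors. By the standard Bertrand series test, $\sum_n n^{-\alpha}(\log n)^{-\beta}$ converges if and only if $\alpha>1$, or $\alpha=1$ and $\beta>1$.

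\emph{Critical exponent.} Convergence for $\gamma LA>1$ and divergence for $\gamma LA<1$ give
\[
s_c=2+\frac{2}{AL}.
\]
Corollary~\ref{cor:critical-index} then gives $p_{\rm res}=s_c/(s_c-1)$. We compute
\[
s_c-1=\frac{AL+2}{AL},\qquad \frac{s_c}{s_c-1}=\frac{2AL+2}{AL+2}=p_*(A,P).
\]
Note that $2<s_c<\infty$, so the endpoint clause of Corollary~\ref{cor:critical-index} applies. The hypothesis $q_n\ge2$ covers the finitely many small $n$, and those terms do not affect convergence.

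\emph{Endpoint.} At $s=s_c$ we have $\gamma=1/(AL)$, so $\gamma LA=1$ and $\gamma LB=B/A$. The series therefore becomes comparable to $\sum_n n^{-1}(\log n)^{-B/A}$. This converges if and only if $B/A>1$, that is $B>A$ (recall $A>0$). Restriction at $p_*$ thus holds exactly when $B>A$. In the remaining case the range is $[1,p_*)$: restriction holds for every $p<p_*$ because $s>s_c$ gives convergence, and the admissible set is an interval by the monotonicity argument in Corollary~\ref{cor:critical-index}.

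\emph{Special case.} For $q_n=\lceil A\log n\rceil+2$ we have $q_n=A\log n+O(1)$, so $B=0\le A$ and the endpoint fails.

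The only delicate step is making sure the $O(1)$ term in $q_n$ is harmless. It enters the exponent multiplied by the fixed constant $\gamma L$, so each term changes only by a factor that is bounded above and below, and comparability is preserved. No other obstacle is expected.
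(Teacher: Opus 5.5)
Your proposal is correct and follows essentially the same route as the paper. You reduce via Theorem~\ref{thm:tridiagonal-arbitrary} to the scalar series $\sum_n P^{-q_n(s/2-1)}\asymp\sum_n n^{-A(s/2-1)\log P}(\log n)^{-B(s/2-1)\log P}$, read off $s_c=2+2/(A\log P)$, and decide the endpoint by the Bertrand test on $\sum_n n^{-1}(\log n)^{-B/A}$. Your handling of the $O(1)$ term, the interval structure of the admissible range, and the special case $B=0$ is also sound; $p=2$ is excluded automatically because $p_*<2$.
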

\begin{proof}
For each fixed $s>2$,
\[
 G_{s,n}\asymp
 n^{-A(s/2-1)\log P}(\log n)^{-B(s/2-1)\log P}.
\]
The critical summability index is
$s_*=2+2/(A\log P)$. At that index the series is comparable to
$\sum_n[n(\log n)^{B/A}]^{-1}$, which converges precisely when $B>A$.
Apply Theorem~\ref{thm:tridiagonal-arbitrary} and duality.
Restriction at $p=1$ is trivial, and the scalar obstruction rules out
the remaining exponents (including $p=2$ by singularity).
\end{proof}

The logarithmic family and the two extreme summability regimes give the
full range of critical restriction exponents with common dimension data.
\begin{theorem}\label{thm:complete-nondeterminacy}
Fix any odd prime $P$. For every $p_0\in[1,2]$, there is a singular
continuous spectral Moran measure $\mu_{p_0}$ such that
\[
 D_q(\mu_{p_0})=1\quad(1<q\le\infty),\qquad
 \dim_{\mathrm F}^{\theta}\mu_{p_0}=\theta\quad(0\le\theta\le1),
\]
and
\[
 \lim_{r\downarrow0}
 \frac{\log\mu_{p_0}(B(x,r))}{\log r}=1
 \qquad(x\in\supp\mu_{p_0}),
 \qquad p_{\rm res}(\mu_{p_0})=p_0.
\]
For every $1<p_0<2$, one may choose either exact restriction range
$[1,p_0]$ or $[1,p_0)$. All these measures have the same pointwise
multifractal spectrum: the local-dimension level set is empty for
$\alpha\ne1$, and is the whole support, of Hausdorff dimension one,
for $\alpha=1$.
\end{theorem}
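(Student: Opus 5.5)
The plan is to realize every $p_0$ inside the tridiagonal family of Theorem~\ref{thm:tridiagonal-arbitrary}. We choose the scale sequence $(q_n)$ so that $q_n\to\infty$ and $q_n/\sum_{j\le n}q_j\to0$. Then Lemma~\ref{lem:fixed-prime-common} gives, for every choice, the common conclusions at once: singular continuity, spectrality, $D_q=1$ for $1<q\le\infty$, $\dim_{\mathrm F}^\theta=\theta$, and local dimension one at every point of the support, uniformly. Once this holds, the multifractal spectrum statement is immediate. The level set for $\alpha=1$ is the whole support. That support has Hausdorff dimension one, by the lemma together with the mass bound \eqref{eq:fixed-prime-common-mass} and the mass distribution principle. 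Every other level set is empty. So the only remaining work is to arrange $p_{\rm res}=p_0$ and, for interior $p_0$, to prescribe whether the endpoint is attained.

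For $1<p_0<2$ we use Corollary~\ref{thm:tridiagonal-logarithmic}. We solve $p_*(A,P)=p_0$ for $A$, which gives $A\log P=2(2-p_0)/(2p_0-2)\cdot\ldots$. More simply, we set $s_0=p_0/(p_0-1)\in(2,\infty)$ and take $A=2/((s_0-2)\log P)>0$. We then put $q_n=\lceil A\log n+B\log\log n\rceil+2$ for $n\ge3$, with $q_n=2$ for small $n$. Choosing $B=A+1$ gives the range $[1,p_0]$, and choosing $B=A$ gives $[1,p_0)$. These sequences satisfy $q_n\to\infty$ and $q_n/\sum_{j\le n}q_j\asymp \log n/(n\log n)\to0$, so the common data hold.

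For the extreme values we argue directly from \eqref{eq:tridiagonal-arbitrary-criterion}.

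For $p_0=2$ we need $\sum_nP^{-q_n(s/2-1)}<\infty$ for every $s>2$. Take $q_n=\lceil(\log n)^2\rceil+2$. Then $P^{-q_n\varepsilon}\le n^{-\varepsilon\log P\log n}$, which is summable, and the growth condition still holds. This gives $s_c=2$, so $p_{\rm res}=2$ by Corollary~\ref{cor:critical-index}. The endpoint $p=2$ fails by singularity, via Proposition~\ref{prop:quadratic-endpoint}.

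For $p_0=1$ we need divergence for every $s$. Take $q_n=\lceil\log\log(n+e^e)\rceil+2$. Then $P^{-q_n c}\gtrsim(\log n)^{-c\log P}$, which is not summable for any $c>0$. So $s_c=\infty$ and $p_{\rm res}=1$, while $q_n\to\infty$ slowly and the ratio condition is trivial.

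The main point to check is that these explicit sequences genuinely fall under Corollary~\ref{thm:tridiagonal-logarithmic}, namely $q_n=A\log n+B\log\log n+O(1)$ with integer values $\ge2$, and under the hypotheses of Lemma~\ref{lem:fixed-prime-common}. Everything else has already been proved earlier in the paper, so this bookkeeping is the only genuine step.
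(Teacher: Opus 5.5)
Your proposal is correct and follows the paper's proof. Like the paper, you take the interior case from Corollary~\ref{thm:tridiagonal-logarithmic} with the same $A=2(p_0-1)/((2-p_0)\log P)$, where your $B=A+1$ and $B=A$ play the roles of the paper's $B=2A$ and $B=0$. You treat the extreme cases with explicit sequences in Theorem~\ref{thm:tridiagonal-arbitrary} (you use $\lceil(\log n)^2\rceil+2$ and $\lceil\log\log(n+e^e)\rceil+2$ where the paper uses $n+2$ and $\lceil\sqrt{\log(n+1)}\rceil+2$), and you get the common data from Lemma~\ref{lem:fixed-prime-common}. The only blemish is the garbled intermediate expression $2(2-p_0)/(2p_0-2)$ for $A\log P$, which is wrong as written (it should be $2(p_0-1)/(2-p_0)$), but your subsequent choice $A=2/((s_0-2)\log P)$ is correct and supersedes it.
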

\begin{proof}
For $1<p_0<2$, set
$A=2(p_0-1)/((2-p_0)\log P)$ and apply
Corollary~\ref{thm:tridiagonal-logarithmic}; choose $B=2A$ for the
closed endpoint or $B=0$ for the open endpoint. For $p_0=2$, take
$q_n=n+2$, for which the series in
\eqref{eq:tridiagonal-arbitrary-criterion} converges for every $s>2$.
For $p_0=1$, take $q_n=\lceil\sqrt{\log(n+1)}\rceil+2$; then its
terms are $n^{-o(1)}$ for each fixed $s>2$, and the series diverges.
In every case $q_n\to\infty$ and $q_n/\sum_{j\le n}q_j\to0$.
Theorem~\ref{thm:tridiagonal-arbitrary} and
Lemma~\ref{lem:fixed-prime-common} prove the remaining assertions.
\end{proof}

\subsection{Quantitative decay in the superlogarithmic regime}

Fix an odd prime $P$ so large that
$
\rho_P:=\frac{2\log(2P)}{\sqrt P}<1.
$
For $\kappa>0$, let
$
q_n=\lceil n^\kappa\rceil+2,
~ r_n=q_n-1,
$
and use the equal-weighted finite-field digit set
$B_n=B(P,r_n,q_n)$.  Thus
$
R_n=P^{q_n}$ and $ M_n=P^{q_n-1}=R_n/P.$

Since $d_n=2$, the one-level Weil estimate \eqref{eq:weil-bound} becomes
\begin{equation}\label{eq:fixed-prime-error}
\maskerr_n
\le P^{1/2}\rho_P^{q_n}
\le C\exp(-c n^\kappa).
\end{equation}
In particular the mask errors are summable.  Notice that no rapid domination of $Q_{n-1}$ by $R_n$ occurs: indeed
$
\log Q_n\asymp n^{\kappa+1},
~
\frac{\log R_n}{\log Q_n}\asymp\frac1n\longrightarrow0.
$

\begin{theorem}\label{thm:zeroF-restriction}
Let $\kappa>0$ and let $\mu_\kappa$ be the equal-weighted Moran measure defined above.  Then $\mu_\kappa$ is a singular continuous spectral probability measure and
\begin{equation*}
\dimF\mu_\kappa=0,
\quad
\dimtwo\mu_\kappa=\dimH\mu_\kappa=\dimH\supp\mu_\kappa=1.
\end{equation*}
Moreover there are constants $c,C>0$ such that
\begin{equation}\label{eq:stretched-Fourier}
|\widehat\mu_\kappa(\xi)|
\le C\exp\!\left[-c(\log(2+|\xi|))^{\kappa/(\kappa+1)}\right]
\qquad(\xi\in\mathbb R),
\end{equation}
and, for every sufficiently small $r>0$,
\begin{equation}\label{eq:stretched-Frostman}
\sup_{x\in\mathbb R}\mu_\kappa(B(x,r))
\le Cr\exp\!\left[C(\log(e/r))^{1/(\kappa+1)}\right].
\end{equation}
The decay exponent in \eqref{eq:stretched-Fourier} is sharp at the level of stretched logarithms: there are $\xi_n\to\infty$ and $C'>0$ such that
\begin{equation}\label{eq:stretched-lower}
|\widehat\mu_\kappa(\xi_n)|
\ge \exp\!\left[-C'(\log(2+|\xi_n|))^{\kappa/(\kappa+1)}\right].
\end{equation}
For every $1\le p<2$,
\begin{equation}\label{eq:full-restriction}
\|\widehat f\|_{L^2(\mu_\kappa)}
\le C_p\|f\|_{L^p(\mathbb R)},
\qquad f\in\mathcal S(\mathbb R).
\end{equation}
Consequently $p_{\rm res}(\mu_\kappa)=2$, while restriction fails at the endpoint $p=2$.
\end{theorem}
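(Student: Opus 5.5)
The plan is to verify the structural properties first, then the restriction range, then the decay bounds. The restriction range comes from Theorem~\ref{thm:exact-moment-criterion}; the decay bounds come from the telescoping argument of Step~4 in the proof of Theorem~\ref{thm:fixed-envelope}.

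\emph{Structure and dimensions.} Spectrality and singular continuity follow as in the paragraph preceding Lemma~\ref{lem:fixed-prime-common}: $(R_n,B_n,L_n)$ is a Hadamard triple, $C_n=P$, and Lemma~\ref{lem:bounded-frame-tail} applies. Since $q_n\to\infty$ and $q_n/\sum_{j\le n}q_j\asymp 1/n\to0$, Lemma~\ref{lem:fixed-prime-common} gives $\dimF\mu_\kappa=0$, $D_2=1$ (hence $\dimtwo\mu_\kappa=1$), and Hausdorff dimension one for both the measure and its support.

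\emph{Frostman bound \eqref{eq:stretched-Frostman}.} Use \eqref{eq:fixed-prime-common-mass}, namely $\mu(B(x,r))\le 5rP^N$ for $Q_N^{-1}\le r<Q_{N-1}^{-1}$. We have $\log Q_{N-1}\asymp N^{\kappa+1}$, so $N\lesssim(\log(1/r))^{1/(\kappa+1)}$, which yields the bound.

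\emph{Restriction \eqref{eq:full-restriction}.} By the uniform lower bound in Proposition~\ref{prop:q41-path} (or directly from \eqref{eq:q41-path-moments}, which applies to $B(P,r,q)$ through the Weil bound $\eta_n\le\maskerr_n\le Ce^{-cn^\kappa}$), we get $G_{s,n}\le (P-1)\eta_n^{s-2}\le Ce^{-c(s-2)n^\kappa}$, which is summable for every $s>2$. Theorem~\ref{thm:exact-moment-criterion} and duality then give restriction for all $1<p<2$, and the case $p=1$ is trivial. The endpoint $p=2$ fails by singularity, as in Step~3 of Theorem~\ref{thm:fixed-envelope}.

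\emph{Upper decay \eqref{eq:stretched-Fourier}.} Reuse the refined telescoping estimate \eqref{eq:fixenv-refined-envelope}, which was proved for any weighted Moran measure with uniform-tail approximants:
\[
|\widehat\mu_\kappa(\xi)|\le \frac{C}{|\xi|}+C\sum_n\maskerr_n\min\{1,|\xi|/Q_{n-1}\}\min\{1,Q_n/|\xi|\}.
\]
For $Q_{N-1}\le|\xi|<Q_N$, the term $n=N$ contributes $\maskerr_N\le Ce^{-cN^\kappa}$. The terms $n<N$ are bounded by $\maskerr_n Q_n/|\xi|\le \maskerr_n 2^{-(N-1-n)}$. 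Split these at $n=N/2$: the terms with $n<N/2$ are at most $2^{-N/2}$, and the terms with $n\ge N/2$ are at most $e^{-c(N/2)^\kappa}$. The terms $n>N$ are dominated geometrically by $\maskerr_n\le e^{-cN^\kappa}$. The $1/|\xi|$ term is far smaller. Altogether $|\widehat\mu_\kappa(\xi)|\lesssim e^{-c'N^\kappa}$. Since $\log|\xi|\asymp N^{\kappa+1}$, we have $N^\kappa\asymp(\log|\xi|)^{\kappa/(\kappa+1)}$, which gives \eqref{eq:stretched-Fourier}.

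\emph{Lower bound \eqref{eq:stretched-lower}.} Argue as in Lemma~\ref{lem:fixed-prime-common}. Parseval gives $\sum_{k\ne0}|m_n(k/R_n)|^2=P-1$, so some $1\le k_n\le R_n/2$ has $|m_n(k_n/R_n)|\gtrsim R_n^{-1/2}$. With $\xi_n=k_nQ_{n-1}$, Lemma~\ref{lem:bounded-frame-tail} gives $|\widehat\mu_\kappa(\xi_n)|\gtrsim_P R_n^{-1/2}=P^{-q_n/2}\ge e^{-Cn^\kappa}$. Moreover $\log\xi_n\asymp n^{\kappa+1}$, so $n^\kappa\asymp(\log\xi_n)^{\kappa/(\kappa+1)}$, which is \eqref{eq:stretched-lower}.

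The main obstacle is the upper decay estimate. The difficulty is that $\log R_n\ll\log Q_n$, so for $\xi\in[Q_{N-1},Q_N)$ the early levels must not contribute non-decaying errors. This is exactly what the factor $\min\{1,Q_n/|\xi|\}$ in \eqref{eq:fixenv-refined-telescope} controls, and checking that this factor produces geometric decay $2^{-(N-n)}$ is the step that needs care.
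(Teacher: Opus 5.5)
\textbf{Verdict: your proof has a genuine gap in the upper decay estimate \eqref{eq:stretched-Fourier}, which your argument does not give for $\kappa>1$; the other parts are correct.} Those parts follow the paper's proof: structure and dimensions from Lemma~\ref{lem:fixed-prime-common}, the Frostman bound from \eqref{eq:fixed-prime-common-mass}, restriction from $G_{s,n}\le(P-1)\eta_n^{s-2}$ and Theorem~\ref{thm:exact-moment-criterion}, and the lower bound from the Parseval resonance.

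A minor point on the restriction step: drop the appeal to Proposition~\ref{prop:q41-path} and \eqref{eq:q41-path-moments}. Those concern the tridiagonal basis, whereas $\mu_\kappa$ uses an arbitrary basis, and a lower bound is irrelevant for sufficiency. All you need is discrete Parseval, $G_{2,n}=P-1$, together with $\eta_n\le\maskerr_n\le P^{1/2}\rho_P^{q_n}$.

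\emph{The gap.} You bound the early terms $n<N$ using only $R_j\ge2$, i.e.\ $Q_n/|\xi|\le 2^{-(N-1-n)}$, and then split at $n=N/2$. The levels $n<N/2$ then contribute a quantity of order $2^{-N/2}$. This is $\lesssim e^{-c'N^\kappa}$ only when $\kappa\le1$. For $\kappa>1$ it is the dominant term. Since $N\asymp(\log|\xi|)^{1/(\kappa+1)}$, your argument only yields
\[
|\widehat\mu_\kappa(\xi)|\lesssim\exp\!\bigl[-c(\log|\xi|)^{1/(\kappa+1)}\bigr],
\]
which is strictly weaker than \eqref{eq:stretched-Fourier} because $1/(\kappa+1)<\kappa/(\kappa+1)$. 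So geometric decay in $N-n$, which you single out as the delicate point, is actually not sufficient.

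\emph{The fix.} Use the true size of the scale ratios $R_j=P^{q_j}$. With $Q_{N-1}\le|\xi|<Q_N$ and $n\le N-2$,
\[
\frac{Q_n}{|\xi|}\le\frac{Q_{N-2}}{Q_{N-1}}=R_{N-1}^{-1}=P^{-q_{N-1}}\le P^{-(N-1)^\kappa}.
\]
Since $\sum_n\maskerr_n<\infty$, all levels $n\le N-2$ together contribute $O(P^{-(N-1)^\kappa})$. The levels $n=N-1,N$ contribute at most $\maskerr_{N-1}+\maskerr_N=O(e^{-cN^\kappa})$, and your treatment of $n>N$ is fine. This is exactly the paper's argument: it works on the window $Q_N\le|\xi|<Q_{N+1}$ and uses $Q_n/Q_N\le P^{-q_N}$ for all $n<N$ at once. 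With this replacement your decay argument goes through for every $\kappa>0$.
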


\begin{proof}
The sequence $q_n=\lceil n^\kappa\rceil+2$ satisfies the assumptions of
Lemma~\ref{lem:fixed-prime-common}, which proves all the qualitative
dimension, spectrality and singular-continuity assertions.

We next prove the quantitative decay.  By \eqref{eq:fixed-prime-error} and Proposition~\ref{prop:profile-envelope} (dropping its first minimum), if
$Q_N\le |\xi|<Q_{N+1}$, then
\[
|\widehat\mu_\kappa(\xi)|
\lesssim |\xi|^{-1}
+\sum_{n\le N}\maskerr_n\frac{Q_n}{Q_N}
+\sum_{n>N}\maskerr_n.
\]
The $n=N$ term is $O(e^{-cN^\kappa})$.  For $n<N$ one has
$Q_n/Q_N\le P^{-q_N}$, while $\sum_n\maskerr_n<\infty$; the tail is also
$O(e^{-c'N^\kappa})$.  Since
$
\log Q_N=(\log P)\sum_{n\le N}q_n\asymp N^{\kappa+1},
$
this proves \eqref{eq:stretched-Fourier}.  
For the lower estimate, the proof of Lemma~\ref{lem:fixed-prime-common}
provides $\xi_n=k_nQ_{n-1}$ with $1\le k_n\le R_n/2$ and
$
 |\widehat\mu_\kappa(\xi_n)|\gtrsim_P R_n^{-1/2}
 =P^{-q_n/2}.
$
Since $q_n\asymp n^\kappa$ and
$\log\xi_n\asymp\log Q_n\asymp n^{\kappa+1}$, this proves
\eqref{eq:stretched-lower}.

For the mass estimate, choose $N$ with
$Q_N^{-1}\le r<Q_{N-1}^{-1}$. Equation~\eqref{eq:fixed-prime-common-mass}
gives $\mu_\kappa(B(x,r))\le5rP^N$, and
$N\asymp(\log(e/r))^{1/(\kappa+1)}$ proves
\eqref{eq:stretched-Frostman}.

For restriction, $C_n=P$ and discrete Parseval gives $G_{2,n}=P-1$.
For every $s>2$,
\[
 G_{s,n}\le(P-1)\eta_n^{s-2}
 \lesssim_{P,s}\exp[-c_s n^\kappa].
\]
Theorem~\ref{thm:exact-moment-criterion} therefore gives restriction
for every $1<p<2$; $p=1$ is trivial. Singularity excludes a bounded
density, so Proposition~\ref{prop:quadratic-endpoint} excludes $p=2$.
\end{proof}

\begin{remark}\label{rem:restriction-phase}
Let us explain why $\kappa=1$ does not mark a restriction phase transition. If one inserts \eqref{eq:stretched-Fourier} and \eqref{eq:stretched-Frostman} into the gauge criterion of Polasek--Rela \cite[Theorem~1.2]{PR}, the Fourier decay has stretched-logarithmic exponent $\kappa/(\kappa+1)$ and the mass loss has exponent $1/(\kappa+1)$.  That criterion therefore changes behaviour at $\kappa=1$: for $\kappa>1$ the decay power dominates, at $\kappa=1$ the two powers balance, and for $0<\kappa<1$ the criterion alone gives no nontrivial range.  Theorem~\ref{thm:zeroF-restriction} shows that this is a threshold of the sufficient criterion, not of the measures themselves.  The exact moment criterion retains the cancellation scale by scale and gives $p_{\rm res}(\mu_\kappa)=2$ throughout the whole range $\kappa>0$.
\end{remark}

\subsection{Resonant lower bounds and arbitrary bases}

The same sampled resonances give a quantitative obstruction even when the
one-level frame bounds are not uniformly bounded.
\begin{proposition}\label{prop:Ls-resonant-obstruction}
Suppose $\varepsilon_n\to0$, and put
$C_{2,n}=R_n\sum_bw_{n,b}^2$. For every $2\le s<\infty$, there is
$n_0$ such that, with extended nonnegative values allowed,
\begin{equation}\label{eq:Ls-resonant-lower}
 \|\widehat\mu\|_{L^s(\R)}^s
 \gtrsim_s\sum_{n\ge n_0}G_{s,n}\\
 \ge\sum_{n\ge n_0}(R_n-1)^{1-s/2}(C_{2,n}-1)^{s/2}.
\end{equation}
Here $G_{2,n}=C_{2,n}-1$. In particular, if $\liminf_nC_{2,n}>1$,
divergence of $\sum_nR_n^{1-s/2}$ rules out restriction at $p=s'$ and
at every larger $p\le2$.
\end{proposition}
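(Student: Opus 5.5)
The plan is to rerun the necessity half of the proof of Theorem~\ref{thm:exact-moment-criterion}, replacing the uniform tail bound coming from $\sup_nC_n<\infty$ by the alternative clause of Lemma~\ref{lem:bounded-frame-tail}. Since $\varepsilon_n\to0$, there is $n_1$ with $|\widehat\nu_n(u)|\ge(\sqrt2\,\pi)^{-1}$ for all $n\ge n_1$ and $|u|\le1/2$. Set $n_0=n_1$; it depends only on $\mu$, and the constants below depend only on $s$.

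\textbf{Step 1: sampling $\widehat\mu$ on the integers.} If $\|\widehat\mu\|_s=\infty$ there is nothing to prove, so assume it is finite. With $\chi$ a smooth compactly supported function equal to one on $[0,1]$ and $\psi=\widehat\chi$, we have $\widehat\mu=\widehat\mu*\psi$. Applying H\"older exactly as in \eqref{eq:q41-sampling} gives
\[
\sum_{j\in\Z}|\widehat\mu(j)|^s\lesssim_s\|\widehat\mu\|_s^s .
\]
The constant depends only on $\psi$ and $s$, so it is absolute once $\chi$ is fixed.

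\textbf{Step 2: resonant grids.} For $n\ge n_0$ and $1\le k\le R_n/2$, every preceding mask equals one at $kQ_{n-1}$. Hence
\[
|\widehat\mu(kQ_{n-1})|=|m_n(k/R_n)|\,|\widehat\nu_n(k/R_n)|\ge(\sqrt2\,\pi)^{-1}|m_n(k/R_n)|.
\]
Conjugate symmetry gives $G_{s,n}\le2\sum_{1\le k\le R_n/2}|m_n(k/R_n)|^s$. The sets $\{kQ_{n-1}:1\le k\le R_n/2\}$ lie in $[Q_{n-1},Q_n/2]$, so they are pairwise disjoint subsets of $\Z$. Summing over $n\ge n_0$ and using Step~1 yields the first inequality in \eqref{eq:Ls-resonant-lower}. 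For $s=2$ the same argument works: Step~1 becomes Plancherel-type sampling, which the H\"older bound still covers.

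\textbf{Step 3: the second inequality.} Discrete Parseval on the grid of $R_n$ points gives
\[
\sum_{k=0}^{R_n-1}|m_n(k/R_n)|^2=R_n\sum_bw_{n,b}^2=C_{2,n},
\]
so $G_{2,n}=C_{2,n}-1$. By H\"older (power means) on $R_n-1$ terms,
\[
G_{s,n}\ge(R_n-1)^{1-s/2}G_{2,n}^{s/2}.
\]
This proves the second inequality.

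\textbf{Consequence.} If $\liminf C_{2,n}>1$, then $C_{2,n}-1\ge c>0$ eventually, so divergence of $\sum R_n^{1-s/2}$ forces $\widehat\mu\notin L^s$. Restriction at $p=s'$ is dual to $L^2(\mu)\to L^s$ extension, and testing with $g\equiv1$ would give $\widehat\mu\in L^s$, which is impossible. For $p\in(s',2]$ we have $p'<s$, so $R_n^{1-p'/2}\ge R_n^{1-s/2}$ and the series at exponent $p'$ diverges as well; the same argument then excludes $p$. At $p=2$, where $p'=2$, the series $\sum_n R_n^{0}$ diverges, and the $s=2$ case gives $\widehat\mu\notin L^2$, which already rules out extension.

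The main obstacle is only bookkeeping: making sure the tail lower bound is available without $\sup C_n<\infty$. The alternative clause of Lemma~\ref{lem:bounded-frame-tail} supplies exactly this for $n\ge n_0$, and the finitely many earlier levels are simply dropped from the sum.
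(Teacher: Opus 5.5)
Your proposal is correct and follows the paper's proof essentially step by step: you use the alternative clause of Lemma~\ref{lem:bounded-frame-tail} for the tail lower bound, the sampling estimate \eqref{eq:q41-sampling}, the disjoint resonant grids $\{kQ_{n-1}\}$, and Parseval plus H\"older for the second inequality. The only small deviation is in the consequence. You reapply the inequality at the smaller exponent $p'\in[2,s)$, using $R_n^{1-p'/2}\ge R_n^{1-s/2}$, whereas the paper interpolates a hypothetical estimate at $p>s'$ with the trivial $p=1$ bound; both arguments are valid.
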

\begin{proof}
The sampling estimate \eqref{eq:q41-sampling} is valid for every
$s\ge2$. Lemma~\ref{lem:bounded-frame-tail} gives a positive lower
bound for $|\widehat\nu_n(u)|$ on $|u|\le1/2$ for all sufficiently
large $n$. At $kQ_{n-1}$, $1\le k\le R_n/2$, all preceding masks are
one. Conjugate symmetry and the disjointness of these integer frequency
sets therefore give
\[
 \sum_{n\ge n_0}G_{s,n}
 \lesssim_s\sum_{j\in\Z}|\widehat\mu(j)|^s
 \lesssim_s\|\widehat\mu\|_s^s.
\]
If the last norm is infinite the inequality is automatic; otherwise it
follows directly from the sampling estimate. Discrete Parseval gives
$G_{2,n}=C_{2,n}-1$, and H\"older on the $R_n-1$ nonzero residues gives
$
 G_{s,n}\ge(R_n-1)^{1-s/2}(C_{2,n}-1)^{s/2}.
$
The stated consequence follows by testing extension with $g=1$ and
interpolating any larger restriction exponent with the trivial $p=1$
estimate.
\end{proof}

For arbitrary ordered bases, fix an odd prime $P$ with $\rho_P<1$ and
integers $q_n\ge2$ tending to infinity. For the equal-weighted measure
$\mu_{\mathbf q}$ on $B(P,q_n-1,q_n)$, we have $C_n=C_{2,n}=P$ and
\begin{equation}\label{eq:q-sequence-coherence}
 \eta_n\le\varepsilon_n\le P^{1/2}\rho_P^{q_n}.
\end{equation}
The next bounds retain this freedom of basis. The tridiagonal case has
already been characterized exactly in
Theorem~\ref{thm:tridiagonal-arbitrary} for every odd prime.

\begin{theorem}\label{thm:scale-growth-transition}
For the arbitrary-basis family just defined, the following hold.
\begin{enumerate}
\item[\textup{(i)}] If $q_n/\log(n+1)\to\infty$, restriction holds
exactly for $1\le p<2$.
\item[\textup{(ii)}] If $q_n=\lceil A\log n\rceil+2$ with $A>0$, then
\begin{equation}\label{eq:q41-improved-range}
 \frac{2A\log(1/\rho_P)+1}{A\log(1/\rho_P)+1}
 \le p_{\rm res}(\mu_{\mathbf q})
 \le\frac{2A\log P+2}{A\log P+2}.
\end{equation}
Restriction holds strictly below the left-hand exponent and fails at
and above the right-hand exponent.
\item[\textup{(iii)}] If $q_n/\log(n+1)\to0$, restriction holds
exactly at $p=1$.
\end{enumerate}
\end{theorem}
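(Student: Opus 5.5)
Every statement will be read off from Theorem~\ref{thm:exact-moment-criterion} and Corollary~\ref{cor:critical-index}, with $C_n=P$ uniformly. For the upper bounds on $G_{s,n}$ we combine Parseval ($G_{2,n}=P-1$) with \eqref{eq:q-sequence-coherence}:
\[
 G_{s,n}\le(P-1)\eta_n^{s-2}\le(P-1)P^{(s-2)/2}\rho_P^{(s-2)q_n}.
\]
For the lower bounds we use H\"older on the $R_n-1$ nonzero residues, as in Proposition~\ref{prop:Ls-resonant-obstruction}:
\[
 G_{s,n}\ge(P-1)^{s/2}(R_n-1)^{1-s/2}\gtrsim_{P,s}P^{-q_n(s/2-1)}.
\]
The basis plays no role in either bound, so they hold for every ordered basis. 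The theorem then reduces to deciding convergence of the scalar series $\sum_n\rho_P^{(s-2)q_n}$ (sufficiency) and $\sum_nP^{-q_n(s/2-1)}$ (necessity) under the three growth regimes.

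\emph{Case (i).} If $q_n/\log(n+1)\to\infty$, then for each fixed $s>2$ we eventually have $q_n\ge K\log n$ with $K(s-2)\log(1/\rho_P)>2$. Hence $\rho_P^{(s-2)q_n}\le n^{-2}$ eventually, and $\sum_nG_{s,n}<\infty$. Theorem~\ref{thm:exact-moment-criterion} then gives extension for every $s>2$, so restriction holds for all $1<p<2$, and $p=1$ is trivial. Restriction fails at $p=2$: the measure is singular (each level-$N$ cylinder has mass $\le P^{-N}$ and the cover has total length $P^{-N}$), so there is no bounded density, and Proposition~\ref{prop:quadratic-endpoint} excludes $p=2$.

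\emph{Case (ii).} With $q_n=\lceil A\log n\rceil+2$ we have
\[
 \rho_P^{(s-2)q_n}\lesssim n^{-A(s-2)\log(1/\rho_P)},
\]
which is summable when $s>2+1/(A\log(1/\rho_P))$. By duality, restriction holds strictly below the left exponent in \eqref{eq:q41-improved-range}. For necessity,
\[
 P^{-q_n(s/2-1)}\gtrsim n^{-A(s/2-1)\log P},
\]
and this series diverges when $A(s/2-1)\log P\le1$, i.e.\ when $s\le2+2/(A\log P)$. By Theorem~\ref{thm:exact-moment-criterion}, extension fails at all such $s$, so restriction fails for $p\ge(2A\log P+2)/(A\log P+2)$; the bound is monotone in $s$, and $p=2$ is again excluded by singularity. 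The two-sided inequality then follows from Corollary~\ref{cor:critical-index}.

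\emph{Case (iii).} If $q_n=o(\log n)$, then $P^{-q_n(s/2-1)}=n^{-o(1)}$ for each fixed $s>2$, so the lower series diverges for every $s>2$. Hence $s_c=\infty$ and $p_{\rm res}=1$. To conclude that restriction fails for every $p>1$, not merely that the supremum equals $1$, note that Theorem~\ref{thm:exact-moment-criterion} forbids extension at every finite $s>2$, so restriction fails at every $1<p\le2$.

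The main obstacle is the necessity bound, since it must not depend on the basis. It does not: Parseval gives $G_{2,n}=P-1$ exactly, and H\"older over the $R_n-1$ nonzero residues then bounds $G_{s,n}$ from below without reference to the digit structure. The only remaining care is in case (ii), where the ceiling in $q_n$ must be absorbed into constants in both the convergence and divergence comparisons.
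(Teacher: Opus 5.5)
Your proposal is correct and follows the paper's proof essentially line by line. It feeds the same two-sided bound $(P-1)^{s/2}(R_n-1)^{1-s/2}\le G_{s,n}\le(P-1)\eta_n^{s-2}\lesssim_{P,s}\rho_P^{(s-2)q_n}$ into Theorem~\ref{thm:exact-moment-criterion} and uses singularity to exclude $p=2$. One small slip is in case (iii): there the failure at $p=2$ comes from singularity (or interpolation with $p=1$), not directly from the moment criterion, which only treats $s>2$.
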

\begin{proof}
For every $s>2$, Parseval and \eqref{eq:q-sequence-coherence} give
\begin{equation}\label{eq:arbitrary-basis-moment-bounds}
 (P-1)^{s/2}(R_n-1)^{1-s/2}
 \le G_{s,n}
 \le(P-1)\eta_n^{s-2}
 \lesssim_{P,s}\rho_P^{(s-2)q_n}.
\end{equation}
Apply Theorem~\ref{thm:exact-moment-criterion}. In case \textup{(i)},
the upper bound is summable for every $s>2$; singularity excludes
$p=2$. In case \textup{(ii)}, it is summable when
$A(s-2)\log(1/\rho_P)>1$, giving the left-hand exponent. The lower
bound is comparable to $n^{-A(s/2-1)\log P}$ and has divergent sum
for $2<s\le2+2/(A\log P)$, including equality. This gives the
right-hand exponent and its failure. In case \textup{(iii)}, the
lower-bound terms are $n^{-o(1)}$ for each fixed $s>2$, so every such
series diverges. Interpolation with $p=1$ and singularity give the
remaining endpoint assertions.
\end{proof}

\section{Fourier decay, resonances and correlation dimension}\label{sec:general-proof}

This section provides the auxiliary dimension formulas used to compare
the exact restriction criterion with power-decay and mass information.
Put
\begin{equation}\label{onelevelq}
 v_n=\sum_{j=0}^{R_n-1}(w_{n,j}-R_n^{-1})^2,
\end{equation}
which measures the quadratic nonuniformity of the level-$n$ digit law.

\begin{theorem}\label{thm:general}
Put $S_n=\sum_{j=0}^{R_n-1}w_{n,j}^2=R_n^{-1}+v_n$. Suppose that
$\maskerr_n\to0$ and that, for every $\eta>0$, there is $C_\eta<\infty$ such
that
\begin{equation}\label{eq:flatness-assumption}
 \maskerr_n^2\le C_\eta Q_n^\eta v_n\quad(n\ge1).
\end{equation}
Then
\begin{align}
\dimF\mu&=\min\{1,\gamma_{\mathrm F}\},
&\gamma_{\mathrm F}&=\liminf_{n\to\infty}\frac{-\log v_n}{\log Q_n},
\label{eq:general-fourier}\\
\dimtwo\mu&=\gamma_2,
&\gamma_2&=\liminf_{n\to\infty}
\frac{-\sum_{j=1}^n\log S_j}{\log Q_n}.
\label{eq:general-correlation}
\end{align}
Consequently, $\dimF\mu=\dimtwo\mu$ if and only if
$\min\{1,\gamma_{\mathrm F}\}=\gamma_2$.
\end{theorem}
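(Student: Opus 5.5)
The plan is to prove the two formulas separately. The Fourier formula \eqref{eq:general-fourier} follows from a telescoping upper bound together with a resonant lower bound. The correlation formula \eqref{eq:general-correlation} follows from exact cylinder computations at the scales $Q_n^{-1}$, plus an interpolation argument for the intermediate scales. Two identities are used throughout. First, by discrete Parseval,
\[
\sum_{k=1}^{R_n-1}\Bigl|m_n\!\Bigl(\frac k{R_n}\Bigr)\Bigr|^2=R_nS_n-1=R_nv_n .
\]
Second, since $\maskerr_n\to0$, the alternative clause of Lemma~\ref{lem:bounded-frame-tail} gives $|\widehat\nu_n(u)|\ge(\sqrt2\pi)^{-1}$ for $|u|\le1/2$ and all large $n$.

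\emph{Fourier dimension, lower bound on the decay.} The Parseval identity shows that for each large $n$ some $1\le k_n\le R_n/2$ satisfies $|m_n(k_n/R_n)|^2\ge R_nv_n/(R_n-1)\ge v_n$. As in the proof of Theorem~\ref{thm:exact-moment-criterion}, put $\xi_n=k_nQ_{n-1}\le Q_n/2$; all earlier masks equal one there. Hence
\[
|\widehat\mu(\xi_n)|=|m_n(k_n/R_n)|\,|\widehat\nu_n(k_n/R_n)|\gtrsim v_n^{1/2}.
\]
If $|\widehat\mu(\xi)|\lesssim|\xi|^{-t/2}$, then $v_n\lesssim Q_n^{-t}$ along all $n$, so $t\le\gamma_{\mathrm F}$. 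This gives $\dimF\mu\le\min\{1,\gamma_{\mathrm F}\}$.

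\emph{Fourier dimension, upper bound on $|\widehat\mu|$.} Fix $t<\min\{1,\gamma_{\mathrm F}\}$ and a small $\eta>0$. Combining \eqref{eq:flatness-assumption} with the definition of $\gamma_{\mathrm F}$ gives $\maskerr_n\lesssim Q_n^{-t/2}$ for all $n$. I then use the telescoping envelope \eqref{eq:fixenv-refined-envelope}, whose derivation used only $P_n(0)=0$, Bernstein's inequality and the uniform-tail factorization, so it holds in general. Take $Q_{N-1}\le|\xi|<Q_N$ and split the sum into three ranges.
\begin{itemize}
\item For $n\le N-1$, the terms are $\lesssim Q_n^{1-t/2}/|\xi|$. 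These increase geometrically because $t<2$ and $R_j\ge2$, so their sum is $\lesssim Q_{N-1}^{1-t/2}/|\xi|\le|\xi|^{-t/2}$.
\item The term $n=N$ is $\lesssim Q_N^{-t/2}\le|\xi|^{-t/2}$.
\item For $n>N$, the terms are $\lesssim|\xi|\,Q_{n-1}^{-1-t/2}$, a geometric sum bounded by $|\xi|^{-t/2}$.
\end{itemize}
Hence $\dimF\mu\ge t$, and this half of \eqref{eq:general-fourier} follows.

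\emph{Correlation dimension.} At $r=Q_n^{-1}$, the level-$n$ cylinders have masses $\prod_{j\le n}w_{j,b_j}$, so $\sum_I\mu(I)^2=\prod_{j\le n}S_j$. Since $\Corr_\mu(r)$ is comparable to the grid sum $\mathcal M_2$ up to constants, and the grid intervals at scale $Q_n^{-1}$ are the cylinder intervals up to endpoint overlaps (which need uniform-tail care), we get $\Corr_\mu(Q_n^{-1})\asymp\prod_{j\le n}S_j$. This immediately gives $\dimtwo\mu\le\gamma_2$.

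The main obstacle is the reverse inequality at intermediate scales $Q_n^{-1}\le r<Q_{n-1}^{-1}$. There I would condition on the first $n-1$ digits and bound the pair probability within level $n$. Two level-$n$ digits at distance at most $rQ_n$ (in digit units) occur with probability at most $(2rQ_n+1)S_n$, by Cauchy--Schwarz on shifted weights. This gives
\[
\Corr_\mu(r)\lesssim\prod_{j<n}S_j\cdot\min\{1,\,rQ_nS_n\}.
\]
With $L=\log(1/r)\in[\log Q_{n-1},\log Q_n]$, the quantity $-\log(\text{bound})$ is the maximum of a constant and a function affine in $L$ with slope $1$. Its endpoint values at $\log Q_{n-1}$ and $\log Q_n$ are $-\sum_{j<n}\log S_j$ and $-\sum_{j\le n}\log S_j$. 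A concave function divided by $L$ has quotient at least the minimum of its endpoint quotients. Hence the ratio $-\log\Corr_\mu(r)/L$ is bounded below, up to $o(1)$, by the smaller of the two cylinder-scale quotients. Taking $\liminf$ yields $\dimtwo\mu\ge\gamma_2$. The final equivalence in the theorem is then immediate from the two formulas.
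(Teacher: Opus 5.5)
Your decay estimate giving $\dimF\mu\ge\min\{1,\gamma_{\mathrm F}\}$ is fine; the refined envelope of Proposition~\ref{prop:profile-envelope} works as well as Lemma~\ref{lem:telescoping}. Your bound $\dimtwo\mu\le\gamma_2$ is also fine. The other two inequalities have genuine gaps.

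\emph{The resonance step for $\dimF\mu\le\gamma_{\mathrm F}$.} This step does not give what you claim. You only know $\xi_n=k_nQ_{n-1}\in[Q_{n-1},Q_n/2]$, so $v_n^{1/2}\lesssim|\widehat\mu(\xi_n)|\lesssim\xi_n^{-t/2}$ yields only $v_n\lesssim Q_{n-1}^{-t}$, not $v_n\lesssim Q_n^{-t}$: the inequality $\xi_n\le Q_n/2$ points the wrong way. This matters because the theorem allows $\log R_n$ to be comparable to or larger than $\log Q_{n-1}$; this is exactly what produces the factor $\delta$ in Theorem~\ref{thm:weighted}. The paper (Proposition~\ref{prop:resonance}) instead uses all resonances at once. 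For $t<1$,
\[
R_nv_n\le2\sum_{1\le k\le R_n/2}\Bigl|m_n\Bigl(\frac k{R_n}\Bigr)\Bigr|^2\lesssim\sum_{1\le k\le R_n/2}(kQ_{n-1})^{-t}\lesssim Q_{n-1}^{-t}R_n^{1-t},
\]
hence $v_n\lesssim Q_n^{-t}$. Alternatively, \eqref{eq:flatness-assumption} shows that the coefficients with $k\le K$ carry at most $2KC_\eta Q_n^\eta v_n$ of the Parseval mass. This forces a coefficient of size $\gtrsim v_n^{1/2}$ at a resonant frequency $\xi\gtrsim Q_n^{1-\eta}$, which also suffices after letting $\eta\downarrow0$.

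\emph{The intermediate-scale bound for $\dimtwo\mu\ge\gamma_2$.} This is the more serious gap. Write $P_{n-1}=\prod_{j<n}S_j$. Your estimate $\Corr_\mu(r)\lesssim P_{n-1}\min\{1,rQ_nS_n\}$ is correct, but it uses neither $\maskerr_n\to0$ nor \eqref{eq:flatness-assumption}, and the deduction from it fails. The function $L\mapsto-\log P_{n-1}+\max\{0,L-\log(Q_nS_n)\}$ is convex, not concave. Its quotient by $L$ is smallest at the kink $L=\log(Q_nS_n)$, where it equals $-\log P_{n-1}/\log(Q_nS_n)$, and this can be far below both cylinder quotients when $\log R_n\gg\log Q_{n-1}$.

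No argument from this bound alone can work. Take equal weights on $\{0,\ldots,\lfloor R_n^{1/2}\rfloor\}$ with $\log R_n/\log Q_{n-1}\to\infty$. Then your bound is sharp at the kink and $\gamma_2=1/2$, yet $\Corr_\mu(2R_n^{1/2}/Q_n)\ge P_{n-1}$ forces $\dimtwo\mu=0$. This measure violates the hypotheses, which is precisely the point: your bound cannot see them.

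The missing ingredient is Fourier control of near-diagonal digit pairs, Lemma~\ref{lem:fejer}. Test $J-J'$ against a Fej\'er kernel of length $K\asymp b^{-1}<R_n$ and use $|m_n(k/R_n)|\le\maskerr_n$ for $0<|k|<R_n$. This gives $\Prob(\dist((J-J')/R_n,\Z)\le b)\lesssim b+\maskerr_n^2$, hence $\Corr_\mu(r)\lesssim P_{n-1}(rQ_{n-1}+\maskerr_n^2)$ (Lemma~\ref{lem:correlation-window}). For $t<t'<\gamma_2$ the first term satisfies $P_{n-1}rQ_{n-1}\lesssim r^t$. For the second, \eqref{eq:flatness-assumption} together with $v_n\le S_n$ gives $P_{n-1}\maskerr_n^2\lesssim Q_n^\eta P_n\lesssim Q_n^{\eta-t'}\le r^t$ whenever $\eta<t'-t$.
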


The formulas require no regularity assumption on the growth of the scales
$R_n$, such as bounds on $R_{n+1}/R_n$ or on
$\log R_n/\log Q_{n-1}$.  Parseval and Cauchy--Schwarz give
$v_n\le\maskerr_n^2\le R_nv_n$, so \eqref{eq:flatness-assumption} is
automatic when $\log R_n=o(\log Q_n)$, provided $\maskerr_n\to0$.
Finite-field masks also satisfy the required comparison when the scale ratios
vary strongly.

For the Fourier formula, a telescoping argument gives the upper decay estimate, while resonant frequencies recover the necessary scale-by-scale bounds on \(v_n\); for the correlation formula, we control \(\Corr_\mu(r)\) uniformly at every radius between two consecutive cylinder scales.

\subsection{Cylinders and uniform-tail approximations}

Recall the independent digit variables from Section~\ref{sec:criterion}. A level-$N$ cylinder corresponds to fixing $J_1,\ldots,J_N$ and is an interval of length $Q_N^{-1}$, with mass $\prod_{j=1}^Nw_{j,J_j}$. Under the assumptions of Theorem~\ref{thm:general}, the endpoints of the cylinder intervals have zero $\mu$-mass. Indeed, Fourier inversion for the discrete measure at level $n$ gives
$
|w_{n,j}-R_n^{-1}|\le \maskerr_n,
$
and hence
$
w_{n,j}\le R_n^{-1}+\maskerr_n.
$
Since $R_n\ge2$ and $\maskerr_n\to0$, we have
$
\max_{0\le j<R_n} w_{n,j}\le \frac34
$
for all sufficiently large $n$. Consequently, the probability of any fixed digit sequence is bounded by a convergent product whose tail is at most
$
\prod_{n\ge N}\frac34=0.
$
Each endpoint has at most two digit expansions, so it also has zero $\mu$-mass.

Write $\mathcal P_N$ for the partition of $[0,1)$ into the intervals
$
\left[\frac{k}{Q_N},\frac{k+1}{Q_N}\right),
~~ 0\le k<Q_N.
$ Put
\begin{equation*}
P_N=\prod_{j=1}^NS_j=\sum_{I\in\mathcal P_N}\mu(I)^2,
\qquad P_0=1.
\end{equation*}
Let
\[
\mu_N
=
\left(
\mathop{*}_{n=1}^{N}
\sum_{j=0}^{R_n-1}
w_{n,j}\delta_{j/Q_n}
\right)
*
\left(Q_N\mathbf 1_{[0,Q_N^{-1}]}\,dx\right).
\] That is,  \(\mu_N\) agrees with \(\mu\) through level \(N\), and the part of the construction below scale \(Q_N^{-1}\) is replaced by the uniform probability measure on \([0,Q_N^{-1}]\). Then $\mu_N\Rightarrow\mu$, since each tail replacement moves a point by at most $Q_N^{-1}$. Moreover,
\begin{equation}\label{eq:finite-product}
\widehat{\mu_N}(\xi)=\prod_{j=1}^Nm_j(\xi/Q_j)\widehat\unif(\xi/Q_N),
\qquad
\widehat\mu(\xi)=\prod_{j=1}^{\infty}m_j(\xi/Q_j),
\end{equation}
where $\unif$ is Lebesgue probability measure on $[0,1]$.

The approximating measures $\mu_N$ also admit a Riesz product representation.
Define the periodic function
$
g_n(x)=R_nw_{n,j},~~
x\in[j/R_n,(j+1)/R_n).$
Then
\begin{equation}\label{eq:riesz-density}
	\dd\mu_N(x)=f_N(x)\dd x,\qquad
	f_N(x)=\prod_{n=1}^N g_n(Q_{n-1}x),
	\qquad x\in[0,1).
\end{equation}

\subsection{Multiscale Fourier decay and arithmetic resonances}

\begin{lemma}\label{lem:telescoping}
If $0<\beta<1$ and $\maskerr_n\le A Q_n^{-\beta}$ for all $n$, then
$
|\widehat\mu(\xi)|\le C_\beta(1+A)(1+|\xi|)^{-\beta}.
$
\end{lemma}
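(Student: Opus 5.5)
The approach is to telescope through the uniform-tail approximations $\mu_N$ of \eqref{eq:finite-product}, following the refined estimate \eqref{eq:fixenv-refined-telescope} from the proof of Theorem~\ref{thm:fixed-envelope}. Set $\mu_0=\unif$, so $|\widehat{\mu_0}(\xi)|\lesssim(1+|\xi|)^{-1}$. Since $\mu_N\Rightarrow\mu$, we have pointwise
\[
\widehat\mu(\xi)=\widehat{\mu_0}(\xi)+\sum_{n\ge1}\bigl(\widehat{\mu_n}(\xi)-\widehat{\mu_{n-1}}(\xi)\bigr).
\]
The plan is to bound each difference by $CAQ_n^{-\beta}$ times a factor that localizes it to $|\xi|\approx Q_n$.

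\emph{Step 1: the one-level difference.} Use $\widehat\unif(t)=D_R(t/R)\widehat\unif(t/R)$ with $R=R_n$ and $t=\xi/Q_{n-1}$. This gives
\[
\widehat{\mu_n}(\xi)-\widehat{\mu_{n-1}}(\xi)=\prod_{j<n}m_j(\xi/Q_j)\,\bigl(m_n-D_{R_n}\bigr)(\xi/Q_n)\,\widehat\unif(\xi/Q_n).
\]
The first product has modulus at most one. For the middle factor, write $P_n=m_n-D_{R_n}$. It satisfies $P_n(0)=0$ and $\|P_n\|_\infty=\maskerr_n$, and it is a trigonometric polynomial of degree $<R_n$. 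Bernstein's inequality therefore gives $|P_n(\xi/Q_n)|\le\maskerr_n\min\{1,2\pi|\xi|/Q_{n-1}\}$ near $\xi=0$. Since $P_n$ is $1$-periodic, this bound should be applied with $\xi/Q_n$ replaced by its distance to $\Z$. Together with $|\widehat\unif(u)|\le\min\{1,(\pi|u|)^{-1}\}$, we get
\[
|\widehat{\mu_n}(\xi)-\widehat{\mu_{n-1}}(\xi)|\lesssim AQ_n^{-\beta}\min\Bigl\{1,\frac{|\xi|}{Q_{n-1}}\Bigr\}\min\Bigl\{1,\frac{Q_n}{|\xi|}\Bigr\}.
\]
In fact only the second minimum matters below; the first can be dropped.

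\emph{Step 2: summation.} Fix $|\xi|\ge1$ and choose $N$ with $Q_{N-1}\le|\xi|<Q_N$.

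For $n\ge N$, bound the difference by $AQ_n^{-\beta}$. Since $Q_n\ge2^{n-N}Q_N$, this part of the series is geometric and at most $C_\beta AQ_N^{-\beta}\le C_\beta A|\xi|^{-\beta}$.

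For $n<N$, use the bound $AQ_n^{1-\beta}/|\xi|$. Because $1-\beta>0$ and $Q_n\le 2^{-(N-1-n)}Q_{N-1}$, the sum is dominated by its last term. This gives at most $C_\beta AQ_{N-1}^{1-\beta}/|\xi|\le C_\beta A|\xi|^{-\beta}$.

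Adding the $\mu_0$ term $\lesssim|\xi|^{-1}\le|\xi|^{-\beta}$ yields $|\widehat\mu(\xi)|\le C_\beta(1+A)|\xi|^{-\beta}$ for $|\xi|\ge1$. For $|\xi|<1$ the trivial bound $|\widehat\mu|\le1$ suffices.

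\emph{Main obstacle.} The real care is in the lower-level sum. There $|\xi|$ exceeds $Q_n$, so the mask error factor gives nothing beyond $\maskerr_n$, and the decay has to come from the uniform tail $\widehat\unif(\xi/Q_n)$. The geometric growth $Q_n\ge2Q_{n-1}$, combined with $\beta<1$, is exactly what makes $\sum_{n<N}Q_n^{1-\beta}$ comparable to its top term. The constant degenerates as $\beta\to1$, which is consistent with the $C_\beta$ dependence in the statement. Absolute convergence of the telescoping series for fixed $\xi$ follows from the $n\ge N$ bound, which justifies passing to the limit.
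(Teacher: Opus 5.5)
Your proposal is correct and is essentially the paper's argument. You telescope through the uniform-tail approximations $\mu_n$ via $\widehat\unif(\xi/Q_{n-1})=D_{R_n}(\xi/Q_n)\widehat\unif(\xi/Q_n)$ to get the one-level bound $\varepsilon_n\min\{1,Q_n/|\xi|\}$, then split the sum where $Q_n\approx|\xi|$ into two geometric series controlled by $Q_{n+1}\ge2Q_n$ and $0<\beta<1$; the Bernstein factor you mention is unnecessary, as you note, and the paper's proof of this lemma omits it.
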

\begin{proof}
The identity
$
\widehat\unif(\xi/Q_{n-1})
=D_{R_n}(\xi/Q_n)\widehat\unif(\xi/Q_n)
$
and \eqref{eq:finite-product} give, with $\mu_0=\unif$,
\begin{equation}\label{eq:telescoping}
|\widehat{\mu_n}(\xi)-\widehat{\mu_{n-1}}(\xi)|
\le\maskerr_n\min\{1,Q_n/|\xi|\}.
\end{equation}
For $|\xi|\ge Q_1$, choose $N$ with $Q_N\le|\xi|<Q_{N+1}$. Since $Q_{n+1}\ge2Q_n$, the two geometric sums satisfy
\[
\frac1{|\xi|}\sum_{n\le N}Q_n^{1-\beta}\lesssim_\beta|\xi|^{-\beta},
\qquad
\sum_{n>N}Q_n^{-\beta}\lesssim_\beta|\xi|^{-\beta}.
\]
Sum \eqref{eq:telescoping}, pass to the weak limit, and use $|\widehat\unif(\xi)|\lesssim|\xi|^{-1}$. The remaining bounded frequency range is absorbed into the constant.
\end{proof}

The following proposition relates polynomial Fourier decay to the
quadratic nonuniformity $v_n$ (see \eqref{onelevelq}) of the individual levels.
\begin{proposition}\label{prop:resonance}
Suppose $\maskerr_n\to0$. If $0<t<1$ and
$|\widehat\mu(\xi)|\lesssim(1+|\xi|)^{-t/2}$, then
\begin{equation}\label{eq:variance-necessary}
v_n\lesssim_t Q_n^{-t}.
\end{equation}
\end{proposition}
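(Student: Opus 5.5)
The plan is to sample $\widehat\mu$ at the resonant frequencies $kQ_{n-1}$, $1\le |k|\le R_n/2$, as in the necessity part of Theorem~\ref{thm:exact-moment-criterion}. Then we convert the pointwise decay hypothesis into a bound on the grid values of $m_n$, and finally use discrete Parseval to identify $v_n$ with a normalized sum of squared grid coefficients.

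\emph{Step 1: Parseval identity.} For $1\le k<R_n$ the uniform mask vanishes at $k/R_n$. Discrete Parseval for the level-$n$ digit law therefore gives
\[
 \sum_{k=1}^{R_n-1}\left|m_n\!\left(\frac{k}{R_n}\right)\right|^2
 =R_n\sum_j w_{n,j}^2-1=R_nv_n .
\]
The second equality holds because $\sum_j w_{n,j}^2=R_n^{-1}+v_n$. Hence
\[
v_n=R_n^{-1}\sum_{k=1}^{R_n-1}|m_n(k/R_n)|^2 \le \max_{1\le k<R_n}|m_n(k/R_n)|^2 .
\]
It is enough to bound $\eta_n^2$. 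Alternatively one can use the average directly, which is even better.

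\emph{Step 2: Resonance identity.} At $\xi=kQ_{n-1}$ with $1\le |k|\le R_n/2$, every earlier mask $m_j(kQ_{n-1}/Q_j)$ with $j<n$ equals $1$, since $Q_{n-1}/Q_j$ is an integer. The product formula \eqref{eq:finite-product} then gives
\[
\widehat\mu(kQ_{n-1})=m_n(k/R_n)\,\widehat\nu_n(k/R_n).
\]
Since $\maskerr_n\to0$, the alternative part of Lemma~\ref{lem:bounded-frame-tail} gives $|\widehat\nu_n(u)|\ge (\sqrt2\pi)^{-1}$ for $|u|\le1/2$ and all large $n$. Consequently
\[
|m_n(k/R_n)|\lesssim |\widehat\mu(kQ_{n-1})|\lesssim (1+|k|Q_{n-1})^{-t/2}.
\]
By conjugate symmetry, $|m_n((R_n-k)/R_n)|=|m_n(k/R_n)|$, so the same bound covers all $1\le k<R_n$ with $k$ replaced by $\min\{k,R_n-k\}$.

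\emph{Step 3: Summation.} Inserting this into Step 1 gives
\[
v_n\lesssim R_n^{-1}\sum_{1\le k\le R_n/2}(kQ_{n-1})^{-t}
\lesssim_t R_n^{-1}Q_{n-1}^{-t}R_n^{1-t}=Q_n^{-t},
\]
using $0<t<1$, so that $\sum_{k\le K}k^{-t}\asymp_t K^{1-t}$. This is exactly \eqref{eq:variance-necessary} for all large $n$. For the finitely many remaining $n$ we have $v_n\le1$ and $Q_n$ is bounded, so the bound holds after enlarging the constant.

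The only delicate point is Step 2's uniform lower bound on the tail factor. Here $\maskerr_n\to0$ is exactly what lets us avoid the bounded-frame hypothesis, via the second part of Lemma~\ref{lem:bounded-frame-tail}. The averaging in Step 3 is what produces $Q_n^{-t}$ rather than the weaker $Q_{n-1}^{-t}$, and it is the reason $t<1$ is needed.
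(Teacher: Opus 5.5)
Your proposal is correct and follows the paper's proof essentially step for step. Both use the resonance identity $\widehat\mu(kQ_{n-1})=m_n(k/R_n)\widehat\nu_n(k/R_n)$ together with the tail lower bound from the alternative part of Lemma~\ref{lem:bounded-frame-tail}, then Parseval and conjugate symmetry, and finally sum $(kQ_{n-1})^{-t}$ over $k\le R_n/2$ using $t<1$. The one slip is your aside in Step~1 that bounding $\eta_n^2$ would suffice: that only gives $Q_{n-1}^{-t}$, as you note yourself at the end, but the argument you actually carry out uses the average and is sound.
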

\begin{proof}
By Lemma~\ref{lem:bounded-frame-tail}, for all sufficiently large $n$
and $1\le k\le R_n/2$,
\[
 |m_n(k/R_n)|
 \lesssim |\widehat\mu(kQ_{n-1})|
 \lesssim(kQ_{n-1})^{-t/2}.
\]
The first estimate uses the identity
$\widehat\mu(kQ_{n-1})=m_n(k/R_n)\widehat\nu_n(k/R_n)$;
all preceding masks are one. Discrete Parseval and conjugate symmetry give
\[
 R_nv_n=\sum_{k=1}^{R_n-1}|m_n(k/R_n)|^2
 \le2\sum_{k=1}^{\lfloor R_n/2\rfloor}|m_n(k/R_n)|^2
 \lesssim_t Q_{n-1}^{-t}R_n^{1-t},
\]
since $0<t<1$. Divide by $R_n$ and absorb the finitely many initial
levels into the implicit constant.
\end{proof}

\begin{proof}[Proof of Theorem~\ref{thm:general}: formula \eqref{eq:general-fourier}]
If $0<t<\min\{1,\gamma_{\mathrm F}\}$, choose $\eta>0$ with $t+\eta<\gamma_{\mathrm F}$. Then $v_n\lesssim Q_n^{-t-\eta}$ and \eqref{eq:flatness-assumption}, applied with a smaller positive exponent if necessary, gives $\maskerr_n\lesssim Q_n^{-t/2}$. Lemma~\ref{lem:telescoping} implies $\dimF\mu\ge t$. This proves the lower bound, including the case $\gamma_{\mathrm F}=+\infty$; if $\gamma_{\mathrm F}=0$ the bound is immediate.

Conversely, for every $0<t<\dimF\mu$, Proposition~\ref{prop:resonance} gives $-\log v_n\ge t\log Q_n-O_t(1)$, with the inequality automatic when $v_n=0$. Therefore $\gamma_{\mathrm F}\ge t$. Letting $t\uparrow\dimF\mu$ proves the upper bound.
\end{proof}

The argument determines the critical decay exponent, but does not in general imply that the corresponding endpoint decay estimate is attained.

\subsection{Correlation estimates between consecutive scales}

\begin{lemma}\label{lem:fejer}
Let $J,J'$ be independent digits with law $(w_j)_{0\le j<R}$. Put
$
m(t)=\sum_{j=0}^{R-1}w_j e^{-2\pi ijt},
$
and $\maskerr=\|m-D_R\|_\infty$. For $R^{-1}\le b\le1$,
\begin{equation*}
\Prob\left(\dist\left(\frac{J-J'}R,\Z\right)\le b\right)
\lesssim b+\maskerr^2.
\end{equation*}
The implicit constant is absolute.
\end{lemma}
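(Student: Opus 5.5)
We bound the probability by testing the difference $Y=(J-J')/R$ against a nonnegative trigonometric polynomial that dominates the indicator of a small neighbourhood of $\Z$; this is the standard Fejér-kernel argument. Put $L=\lfloor 1/(2b)\rfloor$, or $L=1$ when $b\ge1/4$ (in that case the bound $\lesssim b$ is trivial anyway). Take the Fejér kernel
\[
F_L(x)=\sum_{|\ell|<L}\Bigl(1-\frac{|\ell|}{L}\Bigr)e^{2\pi i\ell x}=\frac1L\Bigl(\frac{\sin\pi Lx}{\sin\pi x}\Bigr)^2\ge0 .
\]
For $\dist(x,\Z)\le b\le 1/(2L)$ we have $F_L(x)\ge cL$ with an absolute $c>0$, since $\sin(\pi Lx)\ge (2/\pi)\pi L\dist(x,\Z)$ while $\sin\pi x\le\pi\dist(x,\Z)$. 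Hence
\[
\Prob\bigl(\dist(Y,\Z)\le b\bigr)\le\frac{1}{cL}\,\E F_L(Y).
\]

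Next we expand the expectation. By independence, $\E e^{2\pi i\ell (J-J')/R}=|m(\ell/R)|^2$, so
\[
\E F_L(Y)=\sum_{|\ell|<L}\Bigl(1-\frac{|\ell|}{L}\Bigr)|m(\ell/R)|^2 .
\]
Because $b\ge R^{-1}$, we have $L\le R/2$, so all $\ell\ne0$ with $|\ell|<L$ are nonzero residues modulo $R$. At these points $D_R(\ell/R)=0$, hence $|m(\ell/R)|\le\maskerr$. The term $\ell=0$ contributes $1$, and the other terms contribute at most $2L\maskerr^2$. Therefore
\[
\Prob\bigl(\dist(Y,\Z)\le b\bigr)\lesssim\frac{1+2L\maskerr^2}{L}\lesssim \frac1L+\maskerr^2\lesssim b+\maskerr^2 ,
\]
using $1/L\asymp b$.

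The only delicate point is choosing $L\asymp 1/b$ so that the kernel's lower bound covers the whole window $\dist\le b$ while the frequencies $\ell$ stay inside $\{1,\dots,R-1\}$ modulo $R$. The hypothesis $b\ge R^{-1}$ ensures exactly this. The boundary cases $b$ close to $1$ and $b=R^{-1}$ are handled by the floor conventions and absolute constants. Every constant comes only from the Fejér kernel, so the implicit constant is absolute.
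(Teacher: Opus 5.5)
Your proposal is correct and follows the paper's proof essentially step for step. Both use the Fej\'er kernel of order $\lfloor(2b)^{-1}\rfloor$ with the lower bound $4L/\pi^2$ on the window $\dist(x,\Z)\le b$, expand $\E F_L((J-J')/R)$ as $\sum_{|\ell|<L}(1-|\ell|/L)|m(\ell/R)|^2$, and bound the nonzero terms by $\maskerr^2$ because $b\ge R^{-1}$ forces $0<|\ell|<L<R$.
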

\begin{proof}
The bound is immediate for $b\ge1/4$. Otherwise put
$K=\lfloor(2b)^{-1}\rfloor$, so $2\le K<R$ and $K\asymp b^{-1}$.
The Fej\'er kernel
\[
 F_K(x)=\frac1K\left|\sum_{l=0}^{K-1}e^{2\pi ilx}\right|^2
 =\sum_{|k|<K}(1-|k|/K)e^{2\pi ikx}
\]
satisfies $K^{-1}F_K(x)\ge4/\pi^2$ when $\dist(x,\Z)\le b$:
use $K|x|\le1/2$ in the sine-quotient formula. Independence therefore yields
\begin{align*}
 \Prob\bigl(\dist((J-J')/R,\Z)\le b\bigr)
 &\le\frac{\pi^2}{4K}\,
       \E F_K((J-J')/R)\\
 &=\frac{\pi^2}{4K}
   \sum_{|k|<K}(1-|k|/K)|m(k/R)|^2\\
 &\lesssim K^{-1}+\varepsilon^2\lesssim b+\varepsilon^2.
\end{align*}
Here $m(0)=1$, and $|m(k/R)|\le\varepsilon$ for $0<|k|<K<R$
because the uniform mask vanishes at those grid points.
\end{proof}

\begin{lemma}\label{lem:correlation-window}
If $Q_n^{-1}\le r<Q_{n-1}^{-1}$ and $u=rQ_{n-1}$, then
\begin{equation*}
cP_{n-1}(u+S_n)\le\Corr_\mu(r)
\le CP_{n-1}(u+\maskerr_n^2),
\end{equation*}
where $c,C>0$ are absolute constants.
\end{lemma}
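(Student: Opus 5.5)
We write $\Corr_\mu(r)=\Prob(|X-X'|\le r)$ with $X,X'$ independent copies of $X=\sum_j J_j/Q_j$. We then condition on the first $n-1$ digits. Write $X=A+Q_{n-1}^{-1}Y$, where $A=\sum_{j<n}J_j/Q_j$ lies on the grid $Q_{n-1}^{-1}\Z$, and $Y=(J_n+Z_n)/R_n\in[0,1]$ with $Z_n\sim\nu_n$ independent of $J_n$. Similarly write $X'=A'+Q_{n-1}^{-1}Y'$. Then
\[
Q_{n-1}(X-X')=Q_{n-1}(A-A')+(Y-Y').
\]
Since $|Y-Y'|\le1$ and $u=rQ_{n-1}<1$, the event $|X-X'|\le r$ forces $|Q_{n-1}(A-A')|\le 2$. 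The number of admissible grid differences is therefore bounded by an absolute constant.

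\textbf{Upper bound.} Fix a grid difference $\ell=Q_{n-1}(A-A')$. Summing over pairs of level-$(n-1)$ cylinders at distance $\ell$ and applying Cauchy--Schwarz gives
\[
\sum_{I}\mu(I)\mu(I+\ell/Q_{n-1})\le P_{n-1}.
\]
Because the digits are independent, the conditional probability is
\[
\Prob\bigl(|\ell+Y-Y'|\le u\bigr)\le\Prob\bigl(\dist(Y-Y',\Z)\le u\bigr).
\]
Write $Y-Y'=(J_n-J_n')/R_n+(Z_n-Z_n')/R_n$. Since $|Z_n-Z_n'|\le1$, the event $\dist(Y-Y',\Z)\le u$ implies $\dist((J_n-J_n')/R_n,\Z)\le u+1/R_n$. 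Set $b=\max\{u,R_n^{-1}\}$. Because $r\ge Q_n^{-1}$ we have $u\ge R_n^{-1}$, so $b=u$ and the radius is at most $2u$. Lemma~\ref{lem:fejer} then bounds this probability by $\lesssim u+\maskerr_n^2$. Summing over the boundedly many values of $\ell$ gives
\[
\Corr_\mu(r)\le C P_{n-1}(u+\maskerr_n^2).
\]

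\textbf{Lower bound.} It suffices to prove the two bounds $\Corr_\mu(r)\gtrsim P_{n-1}S_n$ and $\Corr_\mu(r)\gtrsim P_{n-1}u$ separately.

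For the first, $\Corr_\mu(r)\ge\Corr_\mu(Q_n^{-1})$. Two points in the same level-$n$ cylinder lie within $Q_n^{-1}$ of each other, so
\[
\Corr_\mu(Q_n^{-1})\ge\sum_{I\in\mathcal P_n}\mu(I)^2=P_n=P_{n-1}S_n,
\]
using that endpoints carry no mass.

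For the second, restrict to $A=A'$, which has probability $P_{n-1}$. We then need $\Prob(|Y-Y'|\le u)\gtrsim u$ for $u\in[R_n^{-1},1)$. Partition $[0,1]$ into intervals of length $u/2$, of which there are about $2/u$. Cauchy--Schwarz gives
\[
\Prob(|Y-Y'|\le u)\ge\sum_{\text{intervals } J}\Prob(Y\in J)^2\ge\frac{1}{\#\text{intervals}}\gtrsim u.
\]
Adding the two lower bounds (each is at least half their sum) gives
\[
\Corr_\mu(r)\ge cP_{n-1}(u+S_n).
\]

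\textbf{Main obstacle.} The delicate step is the upper bound. There we must check that the tail fluctuation $Z_n$ only enlarges the window by $R_n^{-1}\le u$, and that Lemma~\ref{lem:fejer}, which is stated for the digits alone, applies after this enlargement. The lower-bound argument itself needs no hypothesis on the masks.
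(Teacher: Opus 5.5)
Your proposal is correct and follows essentially the same route as the paper. You decompose at level $n-1$ and bound the prefix overlap by $P_{n-1}$ via Cauchy--Schwarz, enlarge the window to $u+R_n^{-1}\le 2u$ before invoking Lemma~\ref{lem:fejer} (when $2u>1$ that lemma is not needed, since the probability is trivially at most $1<2u$), and get the two lower bounds by restricting to equal prefixes.
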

\begin{proof}
For independent $X,X'$ with law $\mu$, write
\[
 X=(K+Y)/Q_{n-1},\quad X'=(K'+Y')/Q_{n-1},\qquad
 Y=(J_n+Z_n)/R_n,
\]
where the integer prefixes $K,K'$ are independent of the corresponding
tails, and $Y'$ is an independent copy of $Y$. If $p_k=\Prob(K=k)$,
then $\sum_kp_k^2=P_{n-1}$. Also $R_n^{-1}\le u<1$ and
\[
 \Corr_\mu(r)=\Prob(|K-K'+Y-Y'|\le u).
\]
Since $Y,Y'\in[0,1]$, this event requires $K-K'\in\{-1,0,1\}$.
For each such prefix difference it also requires
$
 \dist((J_n-J_n')/R_n,\Z)
 \le u+R_n^{-1}\le2u.
$
Lemma~\ref{lem:fejer}, or the trivial bound when $2u\ge1$, bounds
this tail probability by $C(u+\varepsilon_n^2)$. For each prefix
difference $h$, Cauchy--Schwarz gives
$\sum_kp_kp_{k-h}\le P_{n-1}$. Summing over $h=-1,0,1$ proves
the upper bound.

For the lower bound, restrict to $K=K'$, an event of probability
$P_{n-1}$. Conditional on it, $J_n=J_n'$ implies $|Y-Y'|\le u$,
giving probability at least $S_n$. Partitioning $[0,1]$ into at most
$2/u$ intervals of length at most $u$ gives a second lower bound
$\Prob(|Y-Y'|\le u)\ge u/2$, by Cauchy--Schwarz. Hence this probability
is at least $(S_n+u)/4$, which proves the result.
\end{proof}

\begin{proof}[Completion of the proof of Theorem~\ref{thm:general}:  formula \eqref{eq:general-correlation}]
Since $\Corr_\mu(Q_n^{-1})\ge P_n$, taking logarithms gives $\dimtwo\mu\le\gamma_2$. The reverse inequality is immediate if $\gamma_2=0$. Otherwise fix $0<t<t'<\gamma_2$ and choose $0<\eta<t'-t$. Then $P_n\lesssim Q_n^{-t'}$ for every $n$. Since $v_n\le S_n$, condition \eqref{eq:flatness-assumption} gives
$
P_{n-1}\maskerr_n^2\lesssim Q_n^\eta P_{n-1}S_n
=Q_n^\eta P_n\lesssim Q_n^{-t'+\eta}.
$
For $r,u,n$ as in Lemma~\ref{lem:correlation-window}, the first term in its upper bound satisfies
$
P_{n-1}u\lesssim Q_{n-1}^{-t'}u
\le Q_{n-1}^{-t}u^t=r^t,
$
and the second is at most $CQ_n^{-t}\le Cr^t$. Thus $\Corr_\mu(r)\lesssim r^t$ for all sufficiently small $r$. Letting $t\uparrow\gamma_2$ proves the result and completes Theorem~\ref{thm:general}.
\end{proof}

\subsection{Quantitative Fourier decay without summability}
\label{sec:profiles}

The cancellation of each mask error at the origin gives a quantitative
bound even when the mask errors are not summable.
\begin{proposition}\label{prop:profile-envelope}
Every weighted Moran measure satisfies, for $|\xi|\ge1$,
\begin{equation}\label{eq:general-refined-envelope}
 |\widehat\mu(\xi)|\le\frac C{|\xi|}
 +C\sum_{n\ge1}\varepsilon_n
 \min\left\{1,\frac{|\xi|}{Q_{n-1}}\right\}
 \min\left\{1,\frac{Q_n}{|\xi|}\right\}.
\end{equation}
The series converges for each fixed $\xi$, without a summability
assumption on $(\varepsilon_n)$. In particular, if $\varepsilon_n\to0$,
then $\widehat\mu(\xi)\to0$ as $|\xi|\to\infty$.
\end{proposition}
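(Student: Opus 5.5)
The plan is to run the telescoping argument of Lemma~\ref{lem:telescoping}, but to keep the vanishing of each mask error at the origin, which supplies the extra factor $\min\{1,|\xi|/Q_{n-1}\}$. Let $\mu_n$ be the uniform-tail approximations with $\mu_0=\unif$, so that by \eqref{eq:finite-product}
\[
 \widehat{\mu_n}(\xi)-\widehat{\mu_{n-1}}(\xi)
 =\prod_{j<n}m_j(\xi/Q_j)\,\bigl(m_n-D_{R_n}\bigr)(\xi/Q_n)\,\widehat\unif(\xi/Q_n),
\]
using $\widehat\unif(\xi/Q_{n-1})=D_{R_n}(\xi/Q_n)\widehat\unif(\xi/Q_n)$. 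The prefix product has modulus at most one. For $|\widehat\unif(\xi/Q_n)|$ we use the bound $\lesssim\min\{1,Q_n/|\xi|\}$.

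The remaining step is to bound $P_n:=m_n-D_{R_n}$ near the origin. This is a trigonometric polynomial with frequencies in $\{0,-1,\ldots,-(R_n-1)\}$, with $\|P_n\|_\infty=\varepsilon_n$ and $P_n(0)=0$, since both masks equal one at zero. Bernstein's inequality gives $\|P_n'\|_\infty\le2\pi(R_n-1)\varepsilon_n$. At the point $t=\xi/Q_n$ we therefore have
\[
 |P_n(t)|\le\min\{\varepsilon_n,\,2\pi R_n\varepsilon_n\,\dist(t,\Z)\}\le2\pi\varepsilon_n\min\{1,|\xi|/Q_{n-1}\}.
\]
Multiplying the three factors yields the termwise bound \eqref{eq:fixenv-refined-telescope} for an arbitrary weighted Moran measure.

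Next we sum in $n$ and pass to the limit. Since $\mu_n\Rightarrow\mu$, we get $\widehat{\mu_n}(\xi)\to\widehat\mu(\xi)$ for each fixed $\xi$. Telescoping from $\widehat{\mu_0}=\widehat\unif$, with $|\widehat\unif(\xi)|\le C/|\xi|$, gives \eqref{eq:general-refined-envelope}, provided the series converges.

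Convergence for fixed $\xi$ holds without any summability of $(\varepsilon_n)$, because $\varepsilon_n\le2$. The terms with $Q_{n-1}>|\xi|$ are at most $2|\xi|/Q_{n-1}$, and since $Q_{n}\ge2Q_{n-1}$ these form a geometric series. Only finitely many terms remain.

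For the final assertion, fix $N$ with $Q_{N-1}\le|\xi|<Q_N$. The terms with $n<N$ are bounded by $\varepsilon_n Q_n/|\xi|\le\varepsilon_n2^{-(N-1-n)}$, and the terms with $n>N$ by $\varepsilon_n2^{-(n-N-1)}$. The term $n=N$ is at most $\varepsilon_N$. So the sum is a convolution of $(\varepsilon_n)$ with a summable geometric kernel, evaluated at $N$. If $\varepsilon_n\to0$, this tends to zero as $N\to\infty$: split the sum at $n\approx N/2$, so the early part is bounded by $2\cdot2^{-N/2}$ and the late part by $\sup_{n\ge N/2}\varepsilon_n$.

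The only step needing care is the Bernstein estimate near the origin. We must check that the distance to the nearest integer, rather than $|t|$, is what matters. We also need periodicity of $P_n$, so that the derivative bound can be applied in the interval $|t|\le1/2$ and, when $|t|>1/2$, the trivial bound $\varepsilon_n$ suffices.
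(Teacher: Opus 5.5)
Your proposal is correct and follows the paper's proof essentially step for step. Both use the uniform-tail telescoping, the bound on $m_n-D_{R_n}$ at $\xi/Q_n$ from its vanishing at the origin together with Bernstein's inequality, the domination by $2|\xi|\sum_n Q_{n-1}^{-1}$ for convergence, and the same split into $n<N$, $n=N$ and $n>N$ for the decay to zero (your remark about $\dist(t,\Z)$ is harmless but unnecessary, since the mean value theorem from $t=0$ already gives $|P_n(\xi/Q_n)|\le 2\pi\varepsilon_n|\xi|/Q_{n-1}$).
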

\begin{proof}
Put $P_n=m_n-D_{R_n}$. Then $P_n(0)=0$, and Bernstein's inequality
gives $\|P_n'\|_\infty\le2\pi(R_n-1)\varepsilon_n$.
For the uniform-tail approximations in \eqref{eq:finite-product},
with $\mu_0=\lambda$, the identity
$\widehat\lambda(\xi/Q_{n-1})
=D_{R_n}(\xi/Q_n)\widehat\lambda(\xi/Q_n)$ yields
\[
 |\widehat\mu_n(\xi)-\widehat\mu_{n-1}(\xi)|
 \lesssim\varepsilon_n
 \min\{1,|\xi|/Q_{n-1}\}\min\{1,Q_n/|\xi|\}.
\]
For fixed $\xi$, the tail of this series is dominated by
$2|\xi|\sum_n Q_{n-1}^{-1}<\infty$, since $\varepsilon_n\le2$.
Telescoping and weak convergence prove \eqref{eq:general-refined-envelope}.

If $Q_{N-1}\le|\xi|<Q_N$, the series is bounded by a constant times
\[
 \sum_{n<N}\varepsilon_n2^{-(N-1-n)}
 +\varepsilon_N
 +\sum_{n>N}\varepsilon_n2^{-(n-N-1)}.
\]
When $\varepsilon_n\to0$, the last sum and the central term tend to
zero. In the first sum, finitely many early terms tend to zero and the
remaining terms have arbitrarily small total after a sufficiently late
starting index. This proves the final assertion.
\end{proof}

\section{Weighted models: dimensions and restriction comparisons}\label{sec:weil}

We now specialize the general dimension formulas to weighted Moran measures determined by digit sets \(B_n\subset\{0,\ldots,R_n-1\}\) and level-dependent weights, and derive explicit formulas for their Fourier, correlation, and \(L^q\) dimensions.
\subsection{Weighted masks and dimension formulas}

For the principal weighted specialization, let
$B_n\subset\{0,\ldots,R_n-1\}$, $M_n=|B_n|$, and assume
\begin{equation}\label{eq:weil-abstract}
R_n\to\infty,~~ M_n\le R_n/3,~~
\frac{\log M_n}{\log R_n}\to s\in(0,1],~~
\|M_{B_n}-D_{R_n}\|_\infty\le M_n^{-1/2}R_n^{o(1)},
\end{equation}
where $M_B(t)=|B|^{-1}\sum_{b\in B}e^{-2\pi ibt}$.  For
$0\le\alpha_n\le1$, put
\begin{equation}\label{eq:weighted-law}
 w_{n,j}=\frac{1-\alpha_n}{R_n}
 +\frac{\alpha_n}{M_n}\one_{B_n}(j).
\end{equation}

\begin{theorem}\label{thm:weighted}
Under \eqref{eq:weil-abstract} and \eqref{eq:weighted-law}, set
\[
\ell_n=\log R_n,\quad T_n=\log Q_n,\quad
\delta_n=\frac{\ell_n}{T_n},\quad
a_n=\frac{\log M_n-2\log\alpha_n}{\ell_n},
\]
where $a_n=+\infty$ if $\alpha_n=0$. Then
\begin{equation}\label{eq:variable-main}
\dimF\mu=\min\{1,\liminf_n a_n\delta_n\},\qquad
\dimtwo\mu=\liminf_n\frac{\sum_{j=1}^n\min\{1,a_j\}\ell_j}{T_n}.
\end{equation}
If $\alpha_n=R_n^{-\tau/2}$ for a fixed $\tau\ge0$ and
$\delta=\liminf_n\delta_n$, then
\begin{equation}\label{eq:constant-main}
\dimF\mu=\min\{1,(s+\tau)\delta\},\qquad
\dimtwo\mu=\min\{1,s+\tau\}.
\end{equation}
\end{theorem}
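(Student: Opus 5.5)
The plan is to deduce everything from Theorem~\ref{thm:general}. I will compute the one-level quantities $v_n$, $S_n$ and $\maskerr_n$ explicitly for the mixed law \eqref{eq:weighted-law}, check the hypotheses $\maskerr_n\to0$ and \eqref{eq:flatness-assumption}, and then translate $\gamma_{\mathrm F}$ and $\gamma_2$ into the stated expressions.

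\emph{Step 1: one-level quantities.} From \eqref{eq:weighted-law} we have $w_{n,j}-R_n^{-1}=\alpha_n(M_n^{-1}\one_{B_n}(j)-R_n^{-1})$. Hence
\[
 v_n=\alpha_n^2\Bigl(\frac1{M_n}-\frac1{R_n}\Bigr)\in\Bigl[\frac{2\alpha_n^2}{3M_n},\frac{\alpha_n^2}{M_n}\Bigr],
\]
using $M_n\le R_n/3$. The mask is $m_n=(1-\alpha_n)D_{R_n}+\alpha_nM_{B_n}$, so $\maskerr_n=\alpha_n\|M_{B_n}-D_{R_n}\|_\infty\le\alpha_nM_n^{-1/2}R_n^{o(1)}$. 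Since $\log M_n/\log R_n\to s>0$ and $R_n\to\infty$, this tends to $0$. It also gives $\maskerr_n^2\le v_nR_n^{o(1)}\le C_\eta Q_n^\eta v_n$, because $R_n\le Q_n$; this is \eqref{eq:flatness-assumption}. The case $\alpha_n=0$ is trivial, since then $v_n=\maskerr_n=0$. Finally,
\[
 S_n=R_n^{-1}+v_n\asymp R_n^{-1}\bigl(1+R_n^{1-a_n}\bigr)\asymp R_n^{-\min\{1,a_n\}},
\]
with absolute constants, because $R_n\alpha_n^2/M_n=R_n^{1-a_n}$.

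\emph{Step 2: the formulas \eqref{eq:variable-main}.} We have $-\log v_n=\log M_n-2\log\alpha_n+O(1)=a_n\ell_n+O(1)$. Dividing by $T_n$ gives $a_n\delta_n+O(1/T_n)$, so $\gamma_{\mathrm F}=\liminf_n a_n\delta_n$; this includes the value $+\infty$. For the correlation dimension, $-\sum_{j\le n}\log S_j=\sum_{j\le n}\min\{1,a_j\}\ell_j+O(n)$. Since $T_n=\sum_{j\le n}\ell_j$ and $\ell_j\to\infty$, we have $n/T_n\to0$, so the $O(n)$ error disappears in the liminf. Theorem~\ref{thm:general} then yields \eqref{eq:variable-main}.

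\emph{Step 3: the constant case \eqref{eq:constant-main}.} If $\alpha_n=R_n^{-\tau/2}$, then $a_n=\log M_n/\ell_n+\tau\to s+\tau$. Because $\delta_n\in(0,1]$ is bounded, $\liminf_n a_n\delta_n=(s+\tau)\delta$. For the correlation formula, $\sum_{j\le n}\min\{1,a_j\}\ell_j/\sum_{j\le n}\ell_j$ is a weighted Ces\`aro average with total weight $T_n\to\infty$. Each summand converges to $\min\{1,s+\tau\}$, so by the Stolz--Ces\`aro argument the ratio converges to the same limit.

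The only point needing care is that every comparison constant above (the factor $2/3$ in $v_n$, and the two-sided comparison for $S_n$) is uniform in $n$. This is what lets the cumulative error be $O(n)=o(T_n)$ rather than something comparable to $T_n$. I expect this bookkeeping to be the main, though mild, obstacle; the rest is direct substitution into Theorem~\ref{thm:general}.
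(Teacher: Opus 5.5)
Your proposal is correct and follows the paper's proof essentially step for step. The matching steps are: the exact formula for $v_n$ with the $2/3$ comparison coming from $M_n\le R_n/3$; the verification of $\maskerr_n\to0$ and of \eqref{eq:flatness-assumption} via \eqref{eq:weighted-mask-identity}; the uniform comparison $S_n\asymp R_n^{-1}+R_n^{-a_n}$; the observation $n/T_n\to0$, which absorbs the accumulated $O(1)$ errors; and a weighted Ces\`aro argument for the power-law case $\alpha_n=R_n^{-\tau/2}$.
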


The arithmetic family from Section~\ref{sec:restriction} supplies these hypotheses.
Choose $q_n\to\infty$, $2\le r_n<q_n$, $r_n/q_n\to s\in(0,1]$, and odd primes $p_n>d_n$ tending to infinity, where $d_n=1+\lceil(q_n-r_n)/r_n\rceil$. Put
\begin{equation}\label{eq:arithmetic-parameters}
R_n=p_n^{q_n},\quad M_n=p_n^{r_n},\quad
B_n=B(p_n,r_n,q_n).
\end{equation}
There is no comparison imposed between $R_n$ and $Q_{n-1}$. Since $d_n$ is bounded and
\[
\frac{\log\left[(d_n-1)(2\log(2p_n))^{q_n}\right]}{\log R_n}
=\frac{\log(2\log(2p_n))}{\log p_n}+o(1)\longrightarrow0,
\]
condition \eqref{eq:weil-abstract} holds. In particular, the auxiliary requirement that the prefactor in \eqref{eq:weil-bound} be at most $p_n^{1/2}$ is also unnecessary for dimension formulas.

For the weights \eqref{eq:weighted-law}, direct calculation gives
\begin{align}
m_n-D_{R_n}&=\alpha_n(M_{B_n}-D_{R_n}),\label{eq:weighted-mask-identity}\\
S_n&=R_n^{-1}+\alpha_n^2(M_n^{-1}-R_n^{-1}),\notag\\
v_n&=\alpha_n^2(M_n^{-1}-R_n^{-1}).\notag
\end{align}
These identities give the direct weighted extension of the arithmetic input. The subsequent resonance upper bound is what determines the dependence on scale growth.

\begin{proof}[Proof of Theorem~\ref{thm:weighted}]
Since $M_n\le R_n/3$, we have
\begin{equation}\label{eq:v-comparison}
\frac23\frac{\alpha_n^2}{M_n}\le v_n\le\frac{\alpha_n^2}{M_n}.
\end{equation}
By \eqref{eq:weil-abstract} and \eqref{eq:weighted-mask-identity},
$\maskerr_n^2\le R_n^{o(1)}\alpha_n^2/M_n\lesssim R_n^{o(1)}v_n$; when $\alpha_n=0$, both sides vanish. Also $\maskerr_n\le R_n^{-s/2+o(1)}\to0$. Thus Theorem~\ref{thm:general} applies.

For $\alpha_n>0$, \eqref{eq:v-comparison} gives $-\log v_n=a_n\ell_n+O(1)$, uniformly in the weights. Moreover,
\begin{equation}\label{eq:S-comparison}
S_n\asymp R_n^{-1}+R_n^{-a_n},\qquad
-\log S_n=\min\{1,a_n\}\ell_n+O(1),
\end{equation}
with the same interpretation for $a_n=+\infty$. Since $R_n\to\infty$, we have $n/T_n\to0$, so the accumulated $O(1)$ errors in \eqref{eq:S-comparison} do not affect the limit. Equations \eqref{eq:general-fourier}--\eqref{eq:general-correlation} yield \eqref{eq:variable-main}. If $\alpha_n=R_n^{-\tau/2}$, then $a_n\to s+\tau$; a weighted Ces\`aro argument proves \eqref{eq:constant-main}.
\end{proof}

For completeness, the special Riesz factors in this case are
\[
h_n(x)=\frac{R_n}{M_n}\one_{E_n}(x)-1,
\quad E_n=\bigcup_{b\in B_n}[b/R_n,(b+1)/R_n),
\]
extended periodically. Formula \eqref{eq:riesz-density} becomes
$f_N(x)=\prod_{n=1}^N(1+\alpha_nh_n(Q_{n-1}x))$.

\subsection{Dimension equality, positive Fourier decay, and scale growth}

Formulas~\eqref{eq:variable-main} and~\eqref{eq:constant-main} give a direct criterion for
dimension equality and positive Fourier dimension in terms of the
relative growth of consecutive scales.  We state the fixed-power consequence first, before considering variable amplitudes.
\begin{corollary}\label{cor:equality}
Let $a=s+\tau$ in the power-law family of Theorem~\ref{thm:weighted}. Then
\[
\dimF\mu=\dimtwo\mu
\quad\Longleftrightarrow\quad
\begin{cases}
\delta=1,&0<a\le1,\\
\delta\ge a^{-1},&a>1.
\end{cases}
\]
Positive Fourier dimension holds if and only if $\delta>0$, equivalently if there is $A>0$ such that $R_n\ge Q_{n-1}^A$ eventually. If $\log R_n/\log Q_{n-1}\to A\in(0,\infty)$, then
\begin{equation*}
\dimF\mu=\min\left\{1,\frac{Aa}{A+1}\right\}.
\end{equation*}
\end{corollary}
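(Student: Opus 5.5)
The proof reduces to elementary case analysis on the formulas \eqref{eq:constant-main} of Theorem~\ref{thm:weighted}. With $a=s+\tau>0$ we have $\dimF\mu=\min\{1,a\delta\}$ and $\dimtwo\mu=\min\{1,a\}$.

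\emph{The equality criterion.} First suppose $0<a\le1$. Then $\dimtwo\mu=a$. Since $\delta\le1$ always (as $\ell_n\le T_n$), we get $a\delta\le1$, hence $\dimF\mu=a\delta$. So equality $a\delta=a$ holds exactly when $\delta=1$. Next suppose $a>1$. Then $\dimtwo\mu=1$, and $\min\{1,a\delta\}=1$ holds exactly when $a\delta\ge1$, that is, $\delta\ge a^{-1}$.

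\emph{Positive Fourier dimension.} Because $a>0$, we have $\dimF\mu>0$ if and only if $\delta>0$. To translate this, I would write $T_n=T_{n-1}+\ell_n$, so that
\[
\delta_n=\frac{\ell_n}{T_{n-1}+\ell_n}=\frac{x_n}{1+x_n},\qquad x_n=\frac{\ell_n}{T_{n-1}} .
\]
The map $x\mapsto x/(1+x)$ is increasing. Hence $\liminf\delta_n>0$ if and only if $\liminf x_n>0$. The latter holds if and only if there is $A>0$ with $\log R_n\ge A\log Q_{n-1}$ eventually, i.e.\ $R_n\ge Q_{n-1}^A$.

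\emph{The limit formula.} If $x_n\to A\in(0,\infty)$, then $\delta_n\to A/(A+1)$. Substituting this value of $\delta$ gives $\dimF\mu=\min\{1,Aa/(A+1)\}$.

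The only mildly delicate step is the monotone reparametrization $\delta_n=x_n/(1+x_n)$ in the middle part. This requires $Q_{n-1}\ge2$ so that $T_{n-1}>0$, which holds for $n\ge2$; the finitely many initial terms do not affect the $\liminf$.
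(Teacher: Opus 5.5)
Your proof is correct and follows the paper's own argument. Both apply \eqref{eq:constant-main} directly and translate $\delta$ through $\delta_n=\ell_n/(T_{n-1}+\ell_n)$, which is the paper's identity $\delta_n=1/(1+\log Q_{n-1}/\log R_n)$; your explicit case split, the monotonicity argument for the $\liminf$, and the remark on the initial levels fill in details the paper leaves implicit.
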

\begin{proof}
Apply \eqref{eq:constant-main} and the identity
$
\delta_n=\frac{1}{1+\log Q_{n-1}/\log R_n}.
$
The equivalence for positive dimension follows by bounding this ratio away from zero. All statements are unaffected by finitely many initial levels.
\end{proof}

In particular, for $a>1$, the sufficient condition $R_n\ge Q_{n-1}^{1/(a-1)}$ preserves full Fourier dimension. For $a\le1$, no fixed finite exponent gives equality. If $\log R_{n+1}/\log R_n\to b>1$, then $\delta=(b-1)/b$ and the Fourier dimension is $\min\{1,a(b-1)/b\}$. Ordinary lacunarity of the frequencies $Q_n$ is much weaker:
$Q_{n+1}/Q_n=R_{n+1}\ge2$ holds automatically in the present weighted
Moran construction.

\begin{proposition}\label{prop:equality-rigidity}
Under the hypotheses of Theorem~\ref{thm:weighted}, let
$(\alpha_n)$ be an arbitrary sequence with $0\le \alpha_n\le1$, and  put
$f=\dimF\mu$, $d=\dimtwo\mu$, and $\Delta=\limsup_n\delta_n$. If $d<1$, then
\begin{equation}\label{eq:gap-rigidity}
f\le\Delta d.
\end{equation}
Consequently, $0<\dimF\mu=\dimtwo\mu<1$ requires $\limsup_n\delta_n=1$.
\end{proposition}
\begin{proof}
The assertion is immediate when $f=0$. Suppose $f>0$. If $\Delta=0$, then for any $0<t<f$, \eqref{eq:variable-main} forces $a_n\delta_n\ge t$ eventually and hence $a_n\to\infty$. This would give $d=1$, a contradiction. Thus $\Delta>0$.

For any $0<t<f$ and $\eta>0$, eventually $a_n\delta_n\ge t$ and $\delta_n\le\Delta+\eta$. Hence $\min\{1,a_n\}\ge\min\{1,t/(\Delta+\eta)\}$. Taking the weighted lower limit in \eqref{eq:variable-main} gives
$d\ge\min\{1,t/(\Delta+\eta)\}$. Since $d<1$, let $t\uparrow f$ and $\eta\downarrow0$ to obtain \eqref{eq:gap-rigidity}.
\end{proof}

The $q$th moments of the level-$N$ cylinder masses provide an initial
connection with restriction. However, when successive scales are widely
separated, intermediate scales must also be taken into account. We first
derive a formula that incorporates these scales and then obtain an explicit
expression for the weighted family. Throughout this part $q$ denotes a moment order, independently of the integers $q_n$ in the finite-field construction.

\subsection{\texorpdfstring{$L^q$-dimensions at intermediate scales}
	{Lq dimensions at intermediate scales}}\label{sec:Lq}

We use the dimension and grid notation introduced in Section~\ref{sec:dimension-notation}.

Put
\begin{equation*}
S_n(q)=\sum_{j=0}^{R_n-1}w_{n,j}^q,\quad
P_N(q)=\prod_{n=1}^NS_n(q)
=\sum_{I\in\mathcal P_N}\mu(I)^q,
\quad A_N(q)=-\log P_N(q).
\end{equation*}
For $1\le k\le R_n$, partition $\{0,\ldots,R_n-1\}$ into consecutive blocks
\[
A_{n,k,l}=\{lk,\ldots,\min((l+1)k,R_n)-1\},
\quad 0\le l<\lceil R_n/k\rceil,
\]
and define their moment
\begin{equation*}
H_{n,q}(k)=\sum_l\left(\sum_{j\in A_{n,k,l}}w_{n,j}\right)^q.
\end{equation*}

The next result shows that the $L^q$-dimension is determined by the $q$-moments of consecutive digit blocks at all intermediate scales, and gives a precise criterion for when the simpler cylinder-scale formula is sufficient.

\begin{gridlemma}
Fix $1<q<\infty$. For $1\le k\le R_n$ and $r=k/Q_n$,
\begin{equation}\label{eq:block-comparison}
 \mathcal M_q(\mu,r)\asymp_q P_{n-1}(q)H_{n,q}(k),
\end{equation}
where the implicit constants depend only on $q$, and are independent of
$n$ and $k$.
\end{gridlemma}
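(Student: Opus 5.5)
Since $r=k/Q_n$ is a multiple of the level-$n$ cylinder length $Q_n^{-1}$, I will reduce to an aligned partition and then compare with $\mathcal D_r$. Let $\mathcal P_{n,k}$ be the partition of $[0,1)$ obtained by splitting each level-$(n-1)$ cylinder $I=[K/Q_{n-1},(K+1)/Q_{n-1})$ into the intervals $\bigcup_{j\in A_{n,k,l}}[K/Q_{n-1}+j/Q_n,K/Q_{n-1}+(j+1)/Q_n)$, $0\le l<\lceil R_n/k\rceil$. The mass of such a block is $\mu(I)\sum_{j\in A_{n,k,l}}w_{n,j}$, because under the digit representation the prefix $(J_1,\dots,J_{n-1})$ is independent of $J_n$ and cylinder endpoints carry no mass once we pass to cylinders as digit events. (If endpoint mass is an issue, I will work with the digit events $\{J_1..J_{n-1}=\cdot,\ J_n\in A_{n,k,l}\}$, whose projections are intervals of length at most $r+Q_n^{-1}\le 2r$, closed up; counting overlaps is harmless.) Summing $q$th powers gives exactly
\[
\sum_{E\in\mathcal P_{n,k}}\mu(E)^q=P_{n-1}(q)H_{n,q}(k).
\]

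The blocks of $\mathcal P_{n,k}$ have length at most $r$ and, except for one last block per parent cylinder, exactly $r$. I then compare with the fixed grid $\mathcal D_r$. For the upper bound on $\mathcal M_q(\mu,r)$: each grid interval of length $r$ meets at most three blocks of $\mathcal P_{n,k}$ (blocks have length $\ge Q_n^{-1}$ but could be short; more carefully, a grid interval meets at most two full-length blocks plus possibly a short terminal block and the first block of the next cylinder, so a bounded number, say $\le 4$). Then $\mu(D)^q\le 4^{q-1}\sum_{E\cap D\ne\varnothing}\mu(E)^q$ by convexity, and each block meets at most two grid intervals, giving $\mathcal M_q\le 2\cdot4^{q-1}P_{n-1}(q)H_{n,q}(k)$. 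For the lower bound: each block $E$ has length $\le r$, so it is covered by at most two grid intervals, whence $\mu(E)^q\le 2^{q-1}\sum_{D\cap E\ne\varnothing}\mu(D)^q$, and each grid interval meets boundedly many blocks, giving the reverse inequality with a constant depending only on $q$.

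The main obstacle is the bounded-overlap count, specifically the short terminal blocks $A_{n,k,l}$ with $l=\lceil R_n/k\rceil-1$ when $k\nmid R_n$, and the behavior near boundaries of level-$(n-1)$ cylinders: a grid interval of length $r$ can straddle a cylinder boundary and meet a short terminal block and the next cylinder's first block. I will handle this by noting that any interval of length $r$ intersects at most two parent cylinders when $r\le Q_{n-1}^{-1}$ (true since $k\le R_n$), and within each parent cylinder at most two consecutive blocks of the partition into $k$-runs, so at most four blocks total; conversely a block of length $\le r$ meets at most two grid intervals. Both counts are independent of $n,k$, giving constants depending only on $q$, as required.
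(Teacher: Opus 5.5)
Your proposal is correct and follows essentially the same route as the paper. Both arguments group the level-$n$ cylinders into $k$-blocks inside each parent to obtain the exact identity $\sum_E\mu(E)^q=P_{n-1}(q)H_{n,q}(k)$. Both then compare with $\mathcal D_r$ using the overlap counts (at most four blocks per grid interval, at most two grid intervals per block) together with $\sum a_i^q\le(\sum a_i)^q\le m^{q-1}\sum a_i^q$.

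Your fallback to digit events with closed projections is slightly more careful than the paper, which leaves possible endpoint atoms implicit. It is harmless, and it is not needed in the non-atomic setting where the lemma is actually applied.
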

\begin{proof}
In each level-$(n-1)$ interval, group its level-$n$ subintervals into
consecutive blocks of $k$. A grouped interval has mass
$p\sum_{j\in A_{n,k,l}}w_{n,j}$, where $p$ is its parent mass, so the sum
of the $q$th powers of the grouped masses is exactly
$P_{n-1}(q)H_{n,q}(k)$. Except for the last interval in each parent, every
grouped interval has length $r$, and the last has length at most $r$.
Each member of the fixed grid $\mathcal D_r$ meets at most four grouped
intervals, while each grouped interval meets at most two members of
$\mathcal D_r$. Hence
\[
 \sum_{i=1}^m a_i^q\le\left(\sum_{i=1}^m a_i\right)^q
 \le m^{q-1}\sum_{i=1}^m a_i^q
\]
with $m\le4$ gives \eqref{eq:block-comparison}, with constants depending
only on $q$.
\end{proof}

\begin{theorem}\label{thm:all-scale-Lq}
Suppose that the weighted Moran measure $\mu$ is non-atomic. For every $1<q<\infty$,
\begin{equation}\label{eq:all-scale-Lq}
D_q(\mu)=\liminf_{n\to\infty}\ \min_{1\le k\le R_n}
\frac{A_{n-1}(q)-\log H_{n,q}(k)}
{(q-1)(T_n-\log k)}.
\end{equation}
In particular,
\begin{equation}\label{eq:cylinder-upper-Lq}
D_q(\mu)\le d_q^{\rm cyl}:=
\liminf_{N\to\infty}\frac{A_N(q)}{(q-1)T_N}.
\end{equation}
Equality holds if $\log R_n=o(T_n)$. Alternatively,  equality holds whenever, for every $\eta>0$,
\begin{equation}\label{eq:block-regularity}
H_{n,q}(k)\le C_{q,\eta}
\bigl((k/R_n)^{q-1}+Q_n^\eta S_n(q)\bigr)
\qquad(1\le k\le R_n).
\end{equation}
\end{theorem}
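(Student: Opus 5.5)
The plan is to read \eqref{eq:all-scale-Lq} off the grid lemma \eqref{eq:block-comparison}. Every sufficiently small $r$ lies in some window $Q_n^{-1}\le r<Q_{n-1}^{-1}$, that is, $r\in[k/Q_n,(k+1)/Q_n)$ for a unique $1\le k\le R_n$ (interpreting $k=R_n$ as the left endpoint $1/Q_{n-1}$ of the next window).

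\emph{Step 1: reduce to grid radii.} Within a window, $r\mapsto\mathcal M_q(\mu,r)$ changes by at most a constant factor between $k/Q_n$ and $(k+1)/Q_n\le 2k/Q_n$. Passing from mesh $r$ to mesh $r'\le 2r$ merges at most three cells, which by the same $m^{q-1}$ inequality as in the grid lemma costs a factor depending only on $q$. The denominator $(q-1)\log(1/r)$ changes by $O(1/k)\le O(1)$ relative to $(q-1)(T_n-\log k)$. Moreover $T_n-\log k\ge T_{n-1}\to\infty$, because non-atomicity forces $Q_n\to\infty$. Hence both the additive $O_q(1)$ in the numerator and the $O(1)$ in the denominator vanish in the ratio. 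It follows that $D_q$ equals the liminf over $n$ of the minimum over $k$ of
\[
\frac{-\log\bigl(P_{n-1}(q)H_{n,q}(k)\bigr)+O_q(1)}{(q-1)(T_n-\log k)}.
\]

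\emph{Step 2: the liminf with a minimum.} The uniform $O_q(1)$ can be dropped because the denominators tend to infinity uniformly in $k$. The liminf over $r\downarrow0$ of a function on consecutive windows is exactly the liminf over $n$ of the infimum over that window. This is \eqref{eq:all-scale-Lq}.

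\emph{Step 3: the cylinder bound.} Taking $k=1$ gives $H_{n,q}(1)=S_n(q)$, so the numerator becomes $A_n(q)$ and the denominator $(q-1)T_n$. The minimum is at most this value, which yields \eqref{eq:cylinder-upper-Lq}.

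\emph{Step 4: equality, case $\log R_n=o(T_n)$.} Blocks are unions of digits, so $H_{n,q}(k)\le\bigl(\sum_l\cdots\bigr)$ trivially by superadditivity of $x\mapsto x^q$ across levels. Nesting gives $P_{n-1}(q)H_{n,q}(k)\le P_{n-1}(q)$, since $H\le1$. Also $T_n-\log k\ge T_{n-1}$. The ratio is therefore at least $A_{n-1}(q)/\bigl((q-1)T_n\bigr)$. Since $T_{n-1}/T_n\to1$, its liminf is at least $d_q^{\rm cyl}$.

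\emph{Step 5: equality under \eqref{eq:block-regularity}.} Fix $t<d_q^{\rm cyl}$, so that $P_m(q)\lesssim Q_m^{-(q-1)t}$ for all $m$. Then
\[
P_{n-1}(q)H_{n,q}(k)\lesssim Q_{n-1}^{-(q-1)t}(k/R_n)^{q-1}+Q_n^\eta P_n(q).
\]
The first term is at most $(k/Q_n)^{(q-1)t}$, using $(k/R_n)^{q-1}\le(k/R_n)^{(q-1)t}$ as $k/R_n\le1$ and $t\le1$. The second term is at most $Q_n^{\eta-(q-1)t}\le r^{(q-1)t-\eta}$. Since $\eta$ is arbitrary, $D_q\ge t$; now let $t\uparrow d_q^{\rm cyl}$.

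The main obstacle is Step 1: controlling $\mathcal M_q$ uniformly between grid radii with constants independent of $n$ and $k$. It is also where the edge case $k=R_n$ must be shown to match the next window consistently.
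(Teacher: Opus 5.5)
Your proposal is correct and follows essentially the paper's route: the grid lemma plus a factor-two mesh comparison on each window, with $T_n-\log k\ge T_{n-1}\to\infty$ absorbing the errors, then $H_{n,q}(k)\le1$ and $T_{n-1}/T_n\to1$ for the first equality case, and the block-regularity bound inserted into $P_{n-1}(q)H_{n,q}(k)$ for the second (your $r^{(q-1)t-\eta}$ loss followed by $\eta\downarrow0$ replaces the paper's auxiliary $t<t'$). Minor points: in Step~4 the lower bound on the ratio comes from $T_n-\log k\le T_n$, not from $T_n-\log k\ge T_{n-1}$; $Q_n\to\infty$ is automatic from $R_n\ge2$, while non-atomicity is what underlies the grid lemma through null cylinder endpoints; and the edge term $k=R_n$ is harmless because $H_{n,q}(R_n)=1$ makes it the $k=1$ term of level $n-1$, which is why the paper can simply use closed windows with $k=\lceil rQ_n\rceil$.
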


\begin{proof}
Let $r\in[Q_n^{-1},Q_{n-1}^{-1}]$. Choose $1\le k\le R_n$ so that
$
 r\le \frac{k}{Q_n}<2r.
$
For two fixed grids whose mesh sizes differ by at most a factor of two,
each interval in either grid meets only a bounded number of intervals in
the other. The same elementary $q$-moment comparison used above therefore
gives
\[
 \mathcal M_q(\mu,r)\asymp_q
 \mathcal M_q(\mu,k/Q_n)
 \asymp_q P_{n-1}(q)H_{n,q}(k),
\]
with constants independent of $n,k,r$. Moreover,
$
 \log(1/r)=T_n-\log k+O(1),
$
and $T_n-\log k\ge T_{n-1}\to\infty$. Thus all bounded
multiplicative errors, as well as the additive $O(1)$ in the denominator,
disappear in the logarithmic quotient. Taking the lower limit over all
$r\downarrow0$ proves \eqref{eq:all-scale-Lq}; taking $k=1$ proves
\eqref{eq:cylinder-upper-Lq}.

If $T_n/T_{n-1}\to1$, the estimate
$\mathcal M_q(\mu,r)\lesssim_q P_{n-1}(q)$ on this window proves the reverse bound in \eqref{eq:cylinder-upper-Lq}: each grid interval of length $r\le Q_{n-1}^{-1}$ meets at most two level-$(n-1)$ intervals, and subdivision cannot increase their total $q$-moment. The hypothesis $\log R_n=o(T_n)$ implies $T_n/T_{n-1}\to1$. For the other assertion, equality is immediate if $d_q^{\rm cyl}=0$; otherwise use \eqref{eq:block-regularity}. Fix $0<t<t'<\min\{d_q^{\rm cyl},1\}$ and $\eta<(q-1)(t'-t)$. Then
$P_j(q)\lesssim Q_j^{-(q-1)t'}$. Writing $u=k/R_n$ and $r=u/Q_{n-1}$, \eqref{eq:block-comparison} gives
\[
\mathcal M_q(\mu,r)
\lesssim Q_{n-1}^{-(q-1)t'}u^{q-1}
+Q_n^{\eta-(q-1)t'}\lesssim r^{(q-1)t}.
\]
For the first term, $u\le1$ and $t'<1$ give
$Q_{n-1}^{-(q-1)t'}u^{q-1}\le r^{(q-1)t'}\le r^{(q-1)t}$.
For the second, $\eta-(q-1)t'<-(q-1)t$ and $r\ge Q_n^{-1}$ give the same bound. Let $t\uparrow d_q^{\rm cyl}$.
\end{proof}

The same argument gives the corresponding exact formula at \(q=\infty\), which will be used below. Put
\[
U_n=\max_jw_{n,j},\qquad
H_{n,\infty}(k)=\max_l\sum_{j\in A_{n,k,l}}w_{n,j}.
\]
The same partition comparison, now for the largest mass, gives
\begin{equation*}
D_\infty(\mu)=\liminf_{n\to\infty}\min_{1\le k\le R_n}
\frac{-\sum_{j<n}\log U_j-\log H_{n,\infty}(k)}{T_n-\log k}.
\end{equation*}
Thus both the sufficient moment data and the mass obstruction can be computed from finite digit blocks.

Scales between two consecutive cylinder levels cannot in general be ignored: mass may concentrate there, so the \(L^q\)- dimension may be smaller than the value suggested by the cylinder moments alone.

\subsection{Explicit moments of the weighted laws}

Return to \eqref{eq:weighted-law}. Write $s_n=\log M_n/\ell_n$ and
$h_n=-\log\alpha_n/\ell_n$, with $h_n=+\infty$ when $\alpha_n=0$. At each level,
\begin{align}
S_n(q)&=(R_n-M_n)\left(\frac{1-\alpha_n}{R_n}\right)^q
+M_n\left(\frac{1-\alpha_n}{R_n}+\frac{\alpha_n}{M_n}\right)^q,
\label{eq:weighted-q-exact}\\
S_n(q)&\asymp_q R_n^{1-q}+\alpha_n^qM_n^{1-q}.
\label{eq:weighted-q-comparison}
\end{align}
The constants in \eqref{eq:weighted-q-comparison} are independent of $n$ and $\alpha_n$: the lower bounds follow from Jensen's inequality and the weights on $B_n$, and the upper bound follows from $(a+b)^q\le2^{q-1}(a^q+b^q)$.

We now apply the preceding moment formula to the weighted digit laws,
obtaining explicit \(L^q\)-dimension formulas that will be used in the restriction estimates below.
\begin{theorem}\label{thm:weighted-Lq}
Under the hypotheses of Theorem~\ref{thm:weighted}, for every $1<q\le2$,
\begin{equation}\label{eq:variable-Lq}
D_q(\mu)=\liminf_{N\to\infty}\frac1{T_N}
\sum_{n=1}^N\min\left\{1,s_n+\frac{q}{q-1}h_n\right\}\ell_n.
\end{equation}
In particular, for $\alpha_n=R_n^{-h}$ with $h\ge0$,
\begin{equation}\label{eq:fixed-Lq}
D_q(\mu)=\min\left\{1,s+\frac{qh}{q-1}\right\},\quad 1<q\le2,
\qquad \dimF\mu=\min\{1,(s+2h)\delta\}.
\end{equation}
Here $h=\tau/2$ in the notation of Theorem~\ref{thm:weighted}. For each $q>2$, \eqref{eq:variable-Lq} remains valid under \eqref{eq:block-regularity} or under $\log R_n=o(T_n)$. Without either additional hypothesis, the exact formula is \eqref{eq:all-scale-Lq}, with
\begin{equation}\label{eq:weighted-block-exact}
H_{n,q}(k)=\sum_l\left(
\frac{(1-\alpha_n)|A_{n,k,l}|}{R_n}
+\frac{\alpha_n|B_n\cap A_{n,k,l}|}{M_n}\right)^q.
\end{equation}
\end{theorem}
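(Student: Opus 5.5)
The plan is to apply Theorem~\ref{thm:all-scale-Lq} and show that, for $1<q\le2$, the block regularity hypothesis \eqref{eq:block-regularity} holds automatically for the weighted laws \eqref{eq:weighted-law}. Then $D_q(\mu)=d_q^{\rm cyl}$, and it remains to compute $A_N(q)$ from \eqref{eq:weighted-q-comparison}. For that computation, \eqref{eq:weighted-q-comparison} gives
\[
-\log S_n(q)=\min\{(q-1)\ell_n,\;(q-1)s_n\ell_n+qh_n\ell_n\}+O_q(1),
\]
that is, $-\log S_n(q)=(q-1)\min\{1,s_n+\tfrac{q}{q-1}h_n\}\ell_n+O_q(1)$. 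Since $R_n\to\infty$ forces $N/T_N\to0$, the accumulated $O(1)$ errors disappear after dividing by $(q-1)T_N$. This yields \eqref{eq:variable-Lq} once equality $D_q=d_q^{\rm cyl}$ is known. The fixed-power case \eqref{eq:fixed-Lq} then follows: for $\alpha_n=R_n^{-h}$ we have $s_n\to s$ and $h_n=h$, so the summand is $(\min\{1,s+qh/(q-1)\}+o(1))\ell_n$ and the weighted Ces\`aro average converges. The Fourier formula is \eqref{eq:constant-main} with $\tau=2h$.

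The main step is verifying \eqref{eq:block-regularity} for $1<q\le2$, uniformly in $k$. Each block has mass $a_l+b_l$, where $a_l=(1-\alpha_n)|A_{n,k,l}|/R_n\le k/R_n$ and $b_l=\alpha_n|B_n\cap A_{n,k,l}|/M_n$. Using $(a+b)^q\lesssim_q a^q+b^q$, the uniform part contributes at most $\sum_l a_l^q\le (k/R_n)^{q-1}\sum_l a_l\le (k/R_n)^{q-1}$. The concentrated part needs $\sum_l b_l^q\lesssim Q_n^\eta S_n(q)$, and I would get it from the cruder bound $\sum_l b_l^q\le\alpha_n^q\max_l(|B_n\cap A_l|/M_n)^{q-1}$. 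For this I need to control how many digits of $B_n$ fall into one block of $k$ consecutive residues, which I would do with the mask estimate in \eqref{eq:weil-abstract}: by the Fej\'er-type argument of Lemma~\ref{lem:fejer}, or Erd\H{o}s--Tur\'an discrepancy,
\[
\frac{|B_n\cap A|}{M_n}\lesssim \frac{k}{R_n}+\|M_{B_n}-D_{R_n}\|_\infty\log R_n\le\frac{k}{R_n}+M_n^{-1/2}R_n^{o(1)}.
\]
This gives
\[
\sum_l b_l^q\lesssim \alpha_n^q\Bigl(\frac{k}{R_n}\Bigr)^{q-1}+\alpha_n^qM_n^{-(q-1)/2}R_n^{o(1)}.
\]
The first term is at most $(k/R_n)^{q-1}$. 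The second must be compared with $\alpha_n^qM_n^{1-q}$, and it is only acceptable when $q-1\le (q-1)/2$, which fails. So the discrepancy bound alone is too weak, and I expect this to be the main obstacle.

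To get around it, I would use $q\le2$ as follows. Set $x_l=|B_n\cap A_l|$. By Hölder, $\sum_l x_l^q\le(\sum_l x_l)^{2-q}(\sum_l x_l^2)^{q-1}$, and $\sum_l x_l^2$ is a pair count, namely $M_n^2$ times the probability that two independent uniform points of $B_n$ lie in the same block. Lemma~\ref{lem:fejer}, applied to the uniform law on $B_n$ with $b=k/R_n$, bounds that probability by $k/R_n+\|M_{B_n}-D_{R_n}\|_\infty^2\lesssim k/R_n+M_n^{-1}R_n^{o(1)}$. Hence
\[
\sum_l b_l^q\le\alpha_n^qM_n^{-q}\,M_n^{2-q}\bigl(M_n^2(k/R_n+M_n^{-1}R_n^{o(1)})\bigr)^{q-1}=\alpha_n^q\bigl((k/R_n)^{q-1}+M_n^{1-q}R_n^{o(1)}\bigr).
\]
Both terms are admissible, since $R_n^{o(1)}\le C_\eta Q_n^\eta$ and $\alpha_n^qM_n^{1-q}\lesssim S_n(q)$. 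This is exactly where $q\le2$ enters, and it explains why the cases $q>2$ are handled separately in the statement.

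For $q>2$ the stated validity is read off from Theorem~\ref{thm:all-scale-Lq} directly: under either hypothesis $D_q=d_q^{\rm cyl}$, and the same computation of $A_N(q)$ applies. The exact block formula \eqref{eq:weighted-block-exact} is immediate from summing the weights \eqref{eq:weighted-law} over each block $A_{n,k,l}$ and inserting the result into \eqref{eq:all-scale-Lq}.
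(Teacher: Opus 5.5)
Your final argument is essentially the paper's proof. The paper also verifies \eqref{eq:block-regularity} for $1<q\le2$ by bounding the same-block pair probability for the uniform law on $B_n$ with Lemma~\ref{lem:fejer}, interpolates via $\sum_l b_l^q\le(\sum_l b_l^2)^{q-1}$ (your H\"older step), and splits the mixture by convexity. It then invokes Theorem~\ref{thm:all-scale-Lq} together with \eqref{eq:weighted-q-comparison} and $N/T_N\to0$. The only point you leave implicit is the non-atomicity hypothesis of Theorem~\ref{thm:all-scale-Lq}, which the paper checks via $\max_j w_{n,j}\le R_n^{-1}+M_n^{-1}\to0$.
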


\begin{proof}
First note that $\mu$ is non-atomic. Indeed,
$
	\max_j w_{n,j}\le \frac1{R_n}+\frac1{M_n}\rightarrow0,
$
since $R_n\to\infty$ and $M_n=R_n^{s+o(1)}\to\infty$.
Let $H^B_{n,q}(k)$ denote the block moment for the uniform law on $B_n$, and put $u=k/R_n$ and $e_n=\|M_{B_n}-D_{R_n}\|_\infty$. Two digits in the same block differ by at most $k$, so Lemma~\ref{lem:fejer} gives
$
H^B_{n,2}(k)\lesssim u+e_n^2.
$
For any probability vector $(b_l)$ and $1<q\le2$, Jensen's inequality with weights $b_l$ gives
$\sum_l b_l^q\le(\sum_l b_l^2)^{q-1}$. Thus
$
H^B_{n,q}(k)\lesssim_q u^{q-1}+e_n^{2(q-1)}.
$
The uniform digit law has block moment at most $u^{q-1}$. Applying convexity to the two parts of the mixture yields
\begin{align*}
H_{n,q}(k)
&\lesssim_q u^{q-1}+\alpha_n^qe_n^{2(q-1)}\\
&\le C_{q,\eta}\bigl(u^{q-1}+Q_n^\eta\alpha_n^qM_n^{1-q}\bigr)
\le C_{q,\eta}\bigl(u^{q-1}+Q_n^\eta S_n(q)\bigr).
\end{align*}
Indeed, $e_n\le M_n^{-1/2}R_n^{o(1)}$ by \eqref{eq:weil-abstract}, so for each fixed $q$ and $\eta>0$,
$e_n^{2(q-1)}\le C_{q,\eta}M_n^{1-q}Q_n^\eta$; the finitely many initial levels are absorbed into the constant.
This verifies precisely \eqref{eq:block-regularity}, without a loss on the uniform term. Theorem~\ref{thm:all-scale-Lq} now applies. By \eqref{eq:weighted-q-comparison},
\[
\frac{-\log S_n(q)}{q-1}
=\min\left\{1,s_n+\frac{q}{q-1}h_n\right\}\ell_n+O_q(1).
\]
Since $N/T_N\to0$, summing proves \eqref{eq:variable-Lq}. Weighted Ces\`aro convergence proves \eqref{eq:fixed-Lq}. The assertions for $q>2$ follow from Theorem~\ref{thm:all-scale-Lq}; no higher-moment flatness is inferred from the quadratic mask bound alone.
\end{proof}

For the weighted Moran measures defined by \eqref{eq:weighted-law} with
$\alpha_n=R_n^{-h}$, the $L^q$ dimensions for $q>2$ satisfy
the following bounds:
\begin{equation*}
\frac{q}{2(q-1)}\min\{1,s+2h\}
\le D_q(\mu)\le\min\left\{1,s+\frac{qh}{q-1}\right\},\qquad q>2.
\end{equation*}
The lower bound follows from $\mathcal M_q(\mu,r)\le\mathcal M_2(\mu,r)^{q/2}$; the upper bound is the cylinder bound. Similarly,
\begin{equation}\label{eq:Frostman-bracket}
\frac12\min\{1,s+2h\}\le D_\infty(\mu)
\le\min\{1,s+h\}.
\end{equation}
The largest cylinder mass is $\prod_{n\le N}\max_jw_{n,j}$, which proves the upper bound. The lower bound follows from
$\mu(B(x,r))^2\le\Corr_\mu(2r)$. Formula \eqref{eq:weighted-block-exact}, rather than the endpoints of this bracket, determines the actual value.

\begin{proposition}\label{prop:higher-moment-obstruction}
For each $q>2$ and $0<s<(q-2)/(q-1)$, there are digit sets $B_n$ satisfying \eqref{eq:weil-abstract},
equipped with the uniform weights
$
w_{n,j}=|B_n|^{-1}\mathbf 1_{B_n}(j),
$ with $\delta_n\to1$, such that $\dimF\mu=D_2(\mu)=s$ but $D_q(\mu)<s$. The modified digit sets in this assertion are not claimed to be Hadamard.
\end{proposition}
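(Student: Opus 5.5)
We plan to prove Proposition~\ref{prop:higher-moment-obstruction} by perturbing the finite-field digit sets. The perturbation adds a short run of consecutive digits. This run is too small to spoil the quadratic mask bound in \eqref{eq:weil-abstract}, but it is large enough to carry mass $\asymp M_n^{-1/2}$ in a single block at an intermediate scale. The formulas of Theorem~\ref{thm:weighted} then control $\dimF\mu$ and $D_2\mu$, while the all-scale formula \eqref{eq:all-scale-Lq} exposes the concentration and forces $D_q(\mu)<s$.

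\textbf{Construction.} Take the arithmetic parameters \eqref{eq:arithmetic-parameters} with $r_n/q_n\to s$, and let $B'_n=B(p_n,r_n,q_n)$, so that $|B'_n|=M'_n=p_n^{r_n}$. We let the scales grow so fast that $\log Q_{n-1}=o(\log R_n)$; for instance, take $p_n$ huge compared with $Q_{n-1}$. Then $\delta_n\to1$. Put $K_n=\lfloor (M'_n)^{1/2}\rfloor$ and let $C_n$ be a run of $K_n$ consecutive integers in $\{0,\ldots,R_n-1\}$. Set
\[
B_n=(B'_n\setminus C_n)\cup C_n,\qquad M_n=|B_n|.
\]
Since $M_n=M'_n+O(K_n)$, we still have $\log M_n/\log R_n\to s$ and $M_n\le R_n/3$ eventually. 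The uniform mask of $B_n$ differs from that of $B'_n$ by at most $O(K_n/M_n)=O(M_n^{-1/2})$ in sup norm. Together with \eqref{eq:weil-bound}, this gives
\[
\|M_{B_n}-D_{R_n}\|_\infty\le M_n^{-1/2}R_n^{o(1)}.
\]
Hence \eqref{eq:weil-abstract} holds. Theorem~\ref{thm:weighted} with $\alpha_n=1$, that is $\tau=0$, then gives $\dimF\mu=\min\{1,s\delta\}=s$ and $D_2(\mu)=\dimtwo\mu=\min\{1,s\}=s$. The digit sets are no longer Hadamard, which is why the statement disclaims this.

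\textbf{Upper bound for $D_q$.} We use \eqref{eq:all-scale-Lq} with the single choice $k=K_n$. Some consecutive block of length $K_n$ meets $C_n$ in at least $K_n/2$ points, since $C_n$ straddles at most two blocks. That block has mass at least $K_n/(2M_n)$, so
\[
H_{n,q}(K_n)\gtrsim (K_n/M_n)^q\asymp M_n^{-q/2}.
\]
For the prefix term, Jensen gives $P_{n-1}(q)\ge Q_{n-1}^{1-q}$, hence $A_{n-1}(q)\le (q-1)T_{n-1}=o(\ell_n)$. The denominator in \eqref{eq:all-scale-Lq} is
\[
(q-1)(T_n-\log K_n)=(q-1)\ell_n\bigl(1-s/2+o(1)\bigr).
\]
Therefore
\[
D_q(\mu)\le \frac{qs/2}{(q-1)(1-s/2)}=\frac{qs}{(q-1)(2-s)}.
\]
This quantity is strictly less than $s$ exactly when $q<(q-1)(2-s)$, that is, when $s<(q-2)/(q-1)$, which is the stated range.

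\textbf{Main obstacle.} The delicate point is the bookkeeping on the cluster size. It must be small enough, $K_n\lesssim M_n^{1/2}$, that the mask perturbation stays within $M_n^{-1/2}R_n^{o(1)}$. It must also be large enough that the block moment beats the cylinder moment at scale $K_n/Q_n$. Choosing $K_n=M_n^{1/2}$ exactly at the limit of the quadratic condition is what produces the threshold $(q-2)/(q-1)$. We also need to check that $B_n$ keeps size comparable to $M'_n$ and that the inequality survives the $o(1)$ errors. Both follow because $\delta_n\to1$ makes every prefix contribution negligible.
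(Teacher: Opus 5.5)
Your proposal is correct and follows essentially the same route as the paper. Both use Weil sets with primes growing so fast that $T_{n-1}=o(\ell_n)$, and a run of $\lfloor\sqrt{M_n}\rfloor$ consecutive digits that costs only $O(M_n^{-1/2})$ in the mask. Both then apply Theorem~\ref{thm:weighted} with $\tau=0$ to get $\dimF\mu=D_2(\mu)=s$, and \eqref{eq:all-scale-Lq} at $k=K_n$ to get $D_q(\mu)\le qs/((q-1)(2-s))<s$.

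The differences are only cosmetic: you adjoin the run instead of swapping digits to keep $|B_n|$ fixed, and you allow an unaligned run at the cost of a factor $2$. You should also note that $\mu$ is non-atomic, since $\max_j w_{n,j}=1/M_n\to0$; Theorem~\ref{thm:all-scale-Lq} requires this.
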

\begin{proof}
Choose $0<s<(q-2)/(q-1)$. Take $q_n=n+C_s$ and $r_n=\lfloor sq_n\rfloor$, with $C_s$ large enough that $r_n\ge2$, and choose the odd primes $p_n>d_n$ recursively so large that
$
q_n\log p_n\ge nT_{n-1}.
$
Then the Weil sets $B_n=B(p_n,r_n,q_n)$ satisfy \eqref{eq:weil-abstract}, and
\[
\delta_n=\frac{\log R_n}{T_{n-1}+\log R_n}\ge\frac{n}{n+1}\rightarrow1.
\]
Let $K_n=\lfloor\sqrt{M_n}\rfloor$. Modify each $B_n$ to contain all of $\{0,\ldots,K_n-1\}$, removing the same number of digits outside this interval. The resulting set $B_n'$ has cardinality $M_n$ and
$
\|M_{B_n'}-M_{B_n}\|_\infty\le2K_n/M_n\lesssim M_n^{-1/2}.
$
Thus \eqref{eq:weil-abstract} is preserved, and Theorem~\ref{thm:weighted} gives $\dimF\mu=D_2(\mu)=s$. On the other hand,
$H_{n,q}(K_n)\ge(K_n/M_n)^q$. Applying \eqref{eq:all-scale-Lq} and using $T_{n-1}=o(\ell_n)$ yields
$
D_q(\mu)\le\frac{qs}{(q-1)(2-s)}<s.
$
The cylinder exponent is still $s$. This proves that the higher-moment qualification in Theorem~\ref{thm:weighted-Lq} is necessary even within the abstract flat-mask family.
\end{proof}

\subsection{\texorpdfstring{$L^q$}{Lq}-dimension and Fourier-spectrum restriction bounds}

For comparison with the exact Fourier-moment criterion in Theorem~\ref{thm:exact-moment-criterion},
we now record the restriction bounds obtained from currently available
dimension-based methods, using the $L^q$-dimension information above together
with the Fourier spectrum from Section~\ref{sec:dimension-notation}.

Carnovale, Fraser and de Orellana develop restriction estimates based on
$L^q$ dimensions in \cite{CFdOLq} and on the Fourier spectrum in
\cite{CFdORestriction}. For completeness, we record the following proposition so that the comparison is self-contained. Only the nonendpoint estimates from these works are needed for our purposes; the corresponding endpoint refinements play no role in the conclusions below.

\begin{proposition}\label{prop:Lq-restriction}
Let $\mu$ be a compactly supported probability measure on $\R$ and let $1<q\le\infty$, with $q'=q/(q-1)$ and $\infty'=1$.
If $\beta=\dimF\mu>0$, restriction holds whenever
\begin{equation}\label{eq:Lq-restriction-threshold}
p'>2q'+\frac{4(1-D_q(\mu))}{\beta}.
\end{equation}
More generally, if $0<\theta\le1$ and
$\sigma=\dim_{\mathrm F}^{\theta}\mu>\theta$, restriction holds whenever
\begin{equation}\label{eq:joint-restriction-threshold}
p'>\frac{2q'(\sigma-\theta)+4(1-D_q(\mu))}
{\sigma-\theta D_q(\mu)}.
\end{equation}
For an infinite value of $\sigma$, the right-hand side is interpreted as its limit $2q'$.
\end{proposition}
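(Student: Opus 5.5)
The plan is the standard $TT^*$ Stein--Tomas argument of Mockenhaupt and Mitsis, with the two classical inputs replaced by $L^q$-dimension and Fourier-spectrum information, as in \cite{CFdOLq,CFdORestriction}. Restriction at $p$ is equivalent to boundedness of $f\mapsto f*\widehat\mu$ from $L^p$ to $L^{p'}$; recall $\widehat{\check\mu}=\mu$. Fix a smooth Littlewood--Paley partition $1=\psi_0+\sum_{j\ge1}\psi_j$ with $\psi_j$ supported in $|\xi|\sim2^j$, and write
\[
f*\widehat\mu=\sum_{j\ge0}T_jf,\qquad T_jf=f*(\widehat\mu\,\psi_j).
\]
The goal is an estimate $\|T_j\|_{L^p\to L^{p'}}\lesssim2^{-\epsilon j}$ for some $\epsilon>0$ in the stated range. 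Since every dimension involved is a supremum or a $\liminf$, I fix $D<D_q(\mu)$ and $b<\beta$ (respectively $\sigma_0<\sigma$), prove the bound with these, and let them increase at the end. Strict inequality in \eqref{eq:Lq-restriction-threshold} and \eqref{eq:joint-restriction-threshold} absorbs these losses.

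\emph{Two endpoint bounds per piece.} First, the Fourier-decay bound: $\|T_j\|_{L^1\to L^\infty}\le\|\widehat\mu\psi_j\|_\infty\lesssim2^{-jb/2}$. Second, the mass bound from the multiplier side: $\widehat{T_jf}=\widehat f\cdot(\mu*\check\psi_j)$, and $\check\psi_j$ is an $L^1$-normalized bump at scale $2^{-j}$. Hence $|\mu*\check\psi_j|\lesssim2^j\sum_I\mu(I)\,\mathrm{(rapidly\ decaying\ weights)}$ over grid intervals $I\in\mathcal D_{2^{-j}}$. This gives
\[
\|\mu*\check\psi_j\|_{L^q}^q\lesssim 2^{j(q-1)}\mathcal M_q(\mu,2^{-j})\lesssim 2^{j(q-1)(1-D)}.
\]
Hausdorff--Young on both sides plus H\"older then yields $\|T_j\|_{L^{r}\to L^{r'}}\lesssim\|\mu*\check\psi_j\|_{L^q}\lesssim2^{j(1-D)/q'}$ with $1/r-1/r'=1/q$, i.e.\ $r=2q'/(q'+1)$ (for $q=\infty$ this is the usual $L^2\to L^2$ bound with $r=2$). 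Riesz--Thorin between $(1,\infty)$ with norm $2^{-jb/2}$ and $(r,r')$ with norm $2^{j(1-D)/q'}$ gives a decaying exponent exactly when $p'$ exceeds the threshold. The linear algebra should reproduce $p'>2q'+4(1-D)/b$, and this exponent bookkeeping is the first thing I would verify. Summing the geometric series in $j$ completes \eqref{eq:Lq-restriction-threshold}.

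\emph{Fourier-spectrum version.} Replace the $L^1\to L^\infty$ endpoint by an $L^{u}\to L^{u'}$ bound from Young's inequality, using $\|\widehat\mu\psi_j\|_{L^{2/\theta}}$. By the definition of $\mathcal J_{\sigma_0,\theta}$, the dyadic shell satisfies
\[
\int_{|\xi|\sim2^j}|\widehat\mu|^{2/\theta}\lesssim2^{-j(\sigma_0/\theta-1)},
\qquad\text{so}\qquad
\|\widehat\mu\psi_j\|_{2/\theta}\lesssim2^{-j(\sigma_0-\theta)/2}.
\]
Young with a kernel in $L^{2/\theta}$ gives $L^u\to L^{u'}$ with $2/u-1=\theta/2$ (the analogue $1/u-1/u'=1-\theta/2$). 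Interpolating this against the same $L^q$-mass endpoint $(r,r')$ and solving the linear condition for a negative exponent should produce \eqref{eq:joint-restriction-threshold}. As $\theta\to0$ it recovers the first threshold, and as $\sigma\to\infty$ it tends to $2q'$, which is the stated interpretation. The low-frequency piece $j=0$ is trivially bounded.

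\emph{Main obstacle.} The real risk is arithmetic rather than conceptual: the interpolation parameters must be chosen so that the two exponents combine into exactly the displayed denominators $\beta$ and $\sigma-\theta D_q$, and the case $q=\infty$ must be handled via $D_\infty$ and the Frostman bound. If the direct Young/Hausdorff--Young endpoint pair does not give the sharp constants, I would instead follow \cite{CFdOLq,CFdORestriction} verbatim and cite their nonendpoint theorems. The statement is recorded only for comparison, so citing those results is legitimate.
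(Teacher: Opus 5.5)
Your route is the paper's own: dyadic pieces $\widehat\mu\,\psi_j$, an $L^1\to L^\infty$ bound from Fourier decay, a mass bound from H\"older, Hausdorff--Young and $\|\mu*\check\psi_j\|_q\lesssim 2^{j(1-D)/q'}$, Riesz--Thorin, and, for the spectrum version, Young's inequality with $\|\widehat\mu\,\psi_j\|_{2/\theta}\lesssim 2^{-j(\sigma_0-\theta)/2}$. Your discrete-Young treatment of the Schwartz tails is a clean substitute for the paper's decomposition into compactly supported kernels. The exponent bookkeeping you deferred, however, contains two slips, and the spectrum case is missing a step.

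\emph{First slip.} The relation $1/r-1/r'=1/q$ forces $r=2q/(q+1)$ and $r'=2q'$, which is what the bilinear bound $\|\widehat f\|_{2q'}\|\widehat g\|_{2q'}\|\mu*\check\psi_j\|_q$ needs. Your $r=2q'/(q'+1)$ gives $r'=2q$ and contradicts your own value $r=2$ at $q=\infty$. With $r'=2q'$ and weight $\gamma$ on the mass endpoint, $p'=2q'/\gamma$, and the condition $\gamma(1-D)/q'-(1-\gamma)b/2<0$ is exactly $p'>2q'+4(1-D)/b$.

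\emph{Second slip.} Young's inequality with a kernel in $L^{2/\theta}$ gives $2/u-1=1/u-1/u'=1-\theta/2$, i.e.\ $u=4/(4-\theta)$ and $u'=4/\theta$. It does not give $2/u-1=\theta/2$.

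\emph{Missing step.} Once the $L^1\to L^\infty$ endpoint is replaced by this Young endpoint, both endpoints have finite $p'$, so interpolation alone cannot give a half-line. Write $\Theta$ for the right-hand side of \eqref{eq:joint-restriction-threshold} with $D,\sigma_0$ in place of $D_q(\mu),\sigma$. Interpolating the two endpoints yields a summable dyadic series only in the following ranges: $p'\in(\Theta,4/\theta]$ when $q'\theta<2$; $p'\in[4/\theta,\Theta)$ when $q'\theta>2$, where the decaying side is the opposite one; and $p'=4/\theta=\Theta$ when $q'\theta=2$. To obtain every $p'>\Theta$, you must interpolate the resulting restriction estimate with the trivial bound $\|\widehat f\|_{L^2(\mu)}\le\|f\|_1$. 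This is precisely the final step of the paper's proof.
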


\begin{proof}
Let $a<D_q(\mu)$. A Schwartz approximate identity $\phi_r(x)=r^{-1}\phi(x/r)$ satisfies
\begin{equation}\label{eq:Lq-smoothing}
\|\mu*\phi_r\|_q\lesssim_{a,q,\phi}r^{-(1-a)/q'}.
\end{equation}
For a kernel supported in a fixed multiple of $[-1,1]$, partition the line into intervals of length $r$ and use the bounded number of adjacent intervals to bound the $q$th power of the left-hand side by
$Cr^{1-q}\mathcal M_q(\mu,r)$. A decomposition into dilated compactly supported kernels and Schwartz decay proves \eqref{eq:Lq-smoothing} in general. For $q=\infty$ use the Frostman bound directly. If $D_q=0$, one may take $a<0$ and then let $a\uparrow0$.

Choose a smooth dyadic partition $1=\chi_0+\sum_{j\ge1}\chi_j$ on the frequency variable, with $\chi_j$ supported where $|\xi|\asymp2^j$, and put $K_j=\chi_j\widehat\mu$. Its Fourier transform is a convolution of $\mu$, up to reflection, with a Schwartz kernel at scale $2^{-j}$. H\"older's inequality, Hausdorff--Young and \eqref{eq:Lq-smoothing} give, for $u=2q/(q+1)$,
\begin{equation}\label{eq:Lq-multiplier-bound}
\|f*K_j\|_{u'}\lesssim
2^{j(1-a)/q'}\|f\|_u.
\end{equation}
Indeed the associated bilinear form is bounded by
$\|\widehat K_j\|_q\|\widehat f\|_{2q'}\|\widehat g\|_{2q'}$.
If $0<b<\beta$, then
$\|K_j\|_\infty\lesssim_b2^{-jb/2}$, giving an $L^1\to L^\infty$ bound. Interpolating with \eqref{eq:Lq-multiplier-bound} yields a summable geometric series whenever
$
p'>2q'+4(1-a)/b.
$
Indeed, if the interpolation weight on \eqref{eq:Lq-multiplier-bound} is $\gamma$, then $p'=2q'/\gamma$ and the dyadic exponent is
$\gamma(1-a)/q'-(1-\gamma)b/2$; its negativity is equivalent to the displayed condition.
The low-frequency term is harmless. The $TT^*$ argument and then $a\uparrow D_q$, $b\uparrow\beta$ prove \eqref{eq:Lq-restriction-threshold}.

For the second assertion, choose $\theta<t<\sigma$. Definition~\eqref{eq:Fourier-spectrum-definition} gives
$
\|K_j\|_{2/\theta}\lesssim_t2^{-j(t-\theta)/2}.
$
Young's inequality gives the corresponding convolution bound from
$L^{4/(4-\theta)}$ to $L^{4/\theta}$. Interpolation with \eqref{eq:Lq-multiplier-bound} is summable at exponents strictly on the decaying side of
\[
p'=\frac{2q'(t-\theta)+4(1-a)}{t-a\theta}.
\]
Interpolation of any resulting restriction bound with $L^1\to L^2(\mu)$ gives all larger $p'$. Let $a\uparrow D_q$ and $t\uparrow\sigma$. This proves \eqref{eq:joint-restriction-threshold}.
\end{proof}

For $q=\infty$, \eqref{eq:Lq-restriction-threshold} is the usual nonendpoint Stein--Tomas--Mockenhaupt bound. It is useful to retain the distinction between a sufficient exponent and the actual critical exponent. Put
\begin{equation*}
\mathcal R_{L^q}(\mu)=\inf_{1<q\le\infty}
\left(2q'+\frac{4(1-D_q(\mu))}{\dimF\mu}\right).
\end{equation*}
Then
\begin{equation}\label{eq:restriction-sandwich}
\frac{\mathcal R_{L^q}(\mu)}{\mathcal R_{L^q}(\mu)-1}
\le p_{\rm res}(\mu)\le\frac{2}{2-D_\infty(\mu)}.
\end{equation}
One may further improve the lower bound by minimizing the thresholds in \eqref{eq:joint-restriction-threshold} over their stated domain. These are bounds for the critical exponent, not a general formula for it.

\begin{proposition}\label{prop:restriction-obstructions}
If restriction holds at $1<p\le2$, then
\begin{equation}\label{eq:mass-necessary}
\sup_x\mu(B(x,r))\lesssim r^{2/p'},\qquad 0<r<1.
\end{equation}
Also $\widehat\mu\in L^{p'}(\R)$; in particular, for $p<2$,
\begin{equation}\label{eq:spectrum-necessary}
\dim_{\mathrm F}^{2/p'}\mu\ge2/p'.
\end{equation}
The endpoint $p=2$ holds if and only if $\mu$ has a bounded density.
\end{proposition}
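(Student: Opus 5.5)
Throughout, write the restriction estimate as the dual extension estimate $\|\widehat{g\mu}\|_{L^{p'}(\R)}\lesssim\|g\|_{L^2(\mu)}$. Each assertion then comes from testing this with a suitable $g$.

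\emph{The scalar condition.} Take $g\equiv1$. This gives $\|\widehat\mu\|_{L^{p'}}<\infty$ immediately. For \eqref{eq:spectrum-necessary}, set $\theta=2/p'\in(0,1)$ when $p<2$. Then $\mathcal J_{\theta,\theta}(\mu)^{1/\theta}=\int_{|\xi|\ge1}|\widehat\mu|^{p'}\,\dd\xi<\infty$, since the weight $|\xi|^{t/\theta-1}$ equals one at $t=\theta$. By the definition \eqref{eq:Fourier-spectrum-definition} this gives $\dim_{\mathrm F}^{\theta}\mu\ge\theta$.

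\emph{The mass bound.} Fix $x_0$ and $0<r<1$. Choose a nonnegative Schwartz bump $\phi$ with $\phi\ge1$ on $[-1,1]$ and $\widehat\phi$ compactly supported; for instance, take $\phi=|\widehat\psi|^2$ with $\psi\in C_c^\infty$ suitably normalized. Put $\phi_r(x)=\phi((x-x_0)/r)$ and take $f$ with $\widehat f=\phi_r$. Then
\[
\|f\|_{L^p}=r^{1-1/p}\|\check\phi\|_p=r^{1/p'}\|\check\phi\|_p .
\]
The restriction estimate gives
\[
\mu(B(x_0,r))^{1/2}\le\|\phi_r\|_{L^2(\mu)}\lesssim r^{1/p'} .
\]
Squaring yields $\mu(B(x_0,r))\lesssim r^{2/p'}$, uniformly in $x_0$.

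\emph{The endpoint $p=2$.} Suppose $\mu=f\,dx$ with $f\le K$. Plancherel gives
\[
\|\widehat h\|_{L^2(\mu)}^2\le K\|\widehat h\|_2^2=K\|h\|_2^2 ,
\]
which is restriction at $p=2$. Conversely, restriction at $p=2$ gives, by the mass bound above with $2/p'=1$, $\mu(B(x,r))\lesssim r$ for all $x$ and $r<1$. Hence $\mu$ is absolutely continuous, and Lebesgue differentiation shows its density is bounded by the implied constant.

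I expect no real obstacle here. The only care needed is to choose the test function so that $\widehat f$ dominates the indicator of the ball while $f$ has the correct $L^p$ scaling; the construction above via $|\widehat\psi|^2$ handles both.
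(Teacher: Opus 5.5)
Your proposal is correct and follows essentially the same route as the paper. The paper also tests restriction with a modulated, rescaled Schwartz function whose Fourier transform is at least one on $B(x,r)$ and whose $L^p$ norm is $r^{1/p'}$, takes $g\equiv1$ for $\widehat\mu\in L^{p'}$, and at $p=2$ combines the mass bound with Plancherel for the converse; your extra requirements that $\phi$ be nonnegative with compactly supported Fourier transform are harmless but not needed.
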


\begin{proof}
Choose $\psi\in\mathcal S(\R)$ with $|\widehat\psi|\ge1$ on $[-1,1]$ and apply restriction to
$f(t)=r e^{2\pi ixt}\psi(rt)$. Its Fourier transform is at least one in modulus on $B(x,r)$, and $\|f\|_p=r^{1/p'}\|\psi\|_p$, proving \eqref{eq:mass-necessary}. Taking $g=1$ in the extension estimate proves the Fourier assertion and hence \eqref{eq:spectrum-necessary}. For $p=2$, \eqref{eq:mass-necessary} implies a bounded density; the converse is Plancherel's theorem.
\end{proof}

The upper bound in \eqref{eq:restriction-sandwich} follows from \eqref{eq:mass-necessary}. This is the one-dimensional mass-concentration case of the quantitative flatness obstruction in Fraser \cite[Theorem~2.3]{FraserFlatness}. In higher dimensions anisotropic concentration contains further information; only intervals are needed here.

\subsection{An explicit restriction range beyond Stein--Tomas}

Assume $0<s\le1$, $h\ge0$, and $\delta>0$. Write
$
a=s+2h,~ \beta=\min\{1,a\delta\}=\dimF\mu.
$
Optimizing \eqref{eq:Lq-restriction-threshold} over the range $1<q\le2$ already gives an explicit answer. With $y=q'\ge2$, its threshold is
$
2y+\frac4\beta(1-s-hy)_+.
$
Consequently the best threshold supplied by these moment orders is
\begin{equation}\label{eq:closed-restriction-phase}
\mathcal R_{(1,2]}=
\begin{cases}
4,&a\ge1,\\[2pt]
4+4(1-a)/\beta,&a<1,\ \beta\ge2h,\\[2pt]
2(1-s)/h,&a<1,\ 0<\beta<2h.
\end{cases}
\end{equation}
The last case requires $h>0$. Restriction holds for
$1\le p<\mathcal R_{(1,2]}/(\mathcal R_{(1,2]}-1)$.
If $a\ge1$, or if $a<1$ and $\beta\ge2h$, the minimum is attained at $q=2$. If $a<1$ and $\beta<2h$, it is attained at
\begin{equation*}
q_*=\frac{1-s}{1-s-h}\in(1,2),\qquad D_{q_*}(\mu)=1.
\end{equation*}
When $a<1$ and $\beta=2h$, every $q'$ between $2$ and $(1-s)/h$ gives the same threshold. These statements follow by minimizing a continuous piecewise affine function, and describe a transition in the available estimates.

The optimization leading to \eqref{eq:closed-restriction-phase} yields
a strictly larger restriction range than the classical Stein--Tomas
criterion in a nonempty parameter region.
\begin{corollary}\label{cor:beyond-ST}
If $0<h<(1-s)/2$ and $(s+2h)\delta<2h$, then the finite-moment restriction range is strictly larger than the range supplied by the ordinary Stein--Tomas criterion using the actual Fourier and Frostman dimensions of $\mu$.
\end{corollary}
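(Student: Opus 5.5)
The plan is to compare the two thresholds directly. The ordinary Stein--Tomas criterion uses the actual Fourier and Frostman dimensions; it is the case $q=\infty$ of \eqref{eq:Lq-restriction-threshold}, with threshold
\[
\mathcal R_{\rm ST}=2+\frac{4(1-D_\infty(\mu))}{\beta},\qquad \beta=\dimF\mu=(s+2h)\delta .
\]
The hypothesis $(s+2h)\delta<2h<1$ puts us in the third case of \eqref{eq:closed-restriction-phase}. The finite-moment threshold is therefore $\mathcal R_{(1,2]}=2(1-s)/h$. Since $p\mapsto p'$ is decreasing, it suffices to prove $\mathcal R_{(1,2]}<\mathcal R_{\rm ST}$.

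First we need an upper bound for $D_\infty(\mu)$. By \eqref{eq:Frostman-bracket}, $D_\infty(\mu)\le\min\{1,s+h\}=s+h$, because $s+h<s+2h<1$. Hence
\[
\mathcal R_{\rm ST}\ge 2+\frac{4(1-s-h)}{\beta}.
\]
The Stein--Tomas threshold is decreasing in $D_\infty$, so using this upper bound on $D_\infty$ gives the correct direction.

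Second, we use $\beta<2h$ to get
\[
\mathcal R_{\rm ST}>2+\frac{4(1-s-h)}{2h}=2+\frac{2(1-s)}{h}-2=\frac{2(1-s)}{h}=\mathcal R_{(1,2]}.
\]
This gives strict improvement of the sufficient ranges. Note that $1-s-h>0$, since $h<(1-s)/2$.

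The main subtlety is the degenerate case $\beta=0$, for instance when $\delta=0$. There the Stein--Tomas criterion gives nothing, since the threshold is infinite and only the trivial $p=1$ survives. The third case of \eqref{eq:closed-restriction-phase}, however, requires $0<\beta$. To handle this, I would assume $\delta>0$ as in the standing assumptions of this subsection, so that $\beta>0$. If $\beta=0$ were allowed, the claim would only be that the finite-moment range is nontrivial, and that would require a separate use of the $L^q$ estimate with $\beta$ replaced by the Fourier spectrum. The remaining step is to check that $q_*\in(1,2)$ is admissible, i.e. $h<1-s$. This follows from $h<(1-s)/2$.
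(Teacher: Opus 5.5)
Your proposal is correct and follows the paper's argument: bound $D_\infty(\mu)\le s+h$ by \eqref{eq:Frostman-bracket}, then use $\beta<2h$ to get $2+4(1-s-h)/\beta>2(1-s)/h=\mathcal R_{(1,2]}$. Your extra checks (that $a<1$, and that $\beta>0$ under the standing assumption $\delta>0$, so the third case of \eqref{eq:closed-restriction-phase} applies) are harmless additions, but note one small slip: $q_*<2$ is equivalent to $h<(1-s)/2$ (i.e.\ $a<1$), not merely to $h<1-s$, although you do invoke the right hypothesis.
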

\begin{proof}
By \eqref{eq:Frostman-bracket}, $D_\infty(\mu)\le s+h$. Therefore the classical extension threshold is at least
\[
2+\frac{4(1-s-h)}\beta
>\frac{2(1-s)}h=\mathcal R_{(1,2]}.
\]
The strict inequality is equivalent to $\beta<2h$. In particular, this comparison does not require an unproved identity $D_\infty=s+h$.
\end{proof}

These improvements illustrate the strength of refined dimensional data as
sufficient information. Main Theorems B and C show, however, that even the
full collection of such dimensional data does not determine the optimal
restriction range.

The following example gives a concrete parameter choice for which the
$L^q$-dimension method yields a strict quantitative improvement over the
classical Stein--Tomas criterion.
\begin{example}\label{ex:Lq-gain}
Choose Weil sets with $s=1/4$ and $\delta_n\to1/3$, and take $\alpha_n=R_n^{-1/4}$. Then
\begin{equation*}
\dimF\mu=\tfrac14,\quad
D_q(\mu)=\min\left\{1,\tfrac14+\tfrac{q}{4(q-1)}\right\}
\quad(1<q\le2).
\end{equation*}
Here $D_{3/2}=1$, $D_2=3/4$, and $D_\infty\le1/2$. The finite-moment argument proves restriction for every $p<6/5$, whereas the ordinary Stein--Tomas criterion cannot yield any $p>10/9$. The mass obstruction gives the rigorous interval
\begin{equation}\label{eq:Lq-example-bracket}
\frac65\le p_{\rm res}(\mu)\le\frac43.
\end{equation}
No equality or endpoint assertion is made in \eqref{eq:Lq-example-bracket}; the point is the strict quantitative improvement of the available restriction range.
For instance, the required scales can be chosen with $q_n=n+8$, $r_n=\lfloor q_n/4\rfloor$, and primes $p_n$ such that
$\log p_n=T_{n-1}/(2q_n)+O(1)$ for $n\ge2$. Starting with $p_1$ sufficiently large, Bertrand's postulate supplies these primes; then $\ell_n/T_{n-1}\to1/2$ and hence $\delta_n\to1/3$.
\end{example}

\begin{figure}[ht]
\centering
\begin{minipage}[t]{.47\textwidth}
\centering
\begin{tikzpicture}[x=4.5cm,y=3.4cm,>=stealth]
\draw[->] (0,0) -- (1.13,0) node[right] {$q$};
\draw[->] (0,0) -- (0,1.14) node[above] {$D_q$};
\foreach \x/\lab in {0/1,.5/{3/2},1/2}
  \draw (\x,.015)--(\x,-.015) node[below] {$\lab$};
\foreach \y/\lab in {.75/{3/4},1/1}
  \draw (.015,\y)--(-.015,\y) node[left] {$\lab$};
\draw[densely dashed,gray] (0,.75)--(1,.75)--(1,0);
\draw[densely dashed,gray] (.5,0)--(.5,1);
\draw[blue!70!black,very thick] (0,1)--(.5,1);
\draw[blue!70!black,very thick,domain=.5:1,samples=55]
  plot (\x,{.5+.25/\x});
\fill[blue!70!black] (.5,1) circle (1.4pt) (1,.75) circle (1.4pt);
\node[align=center,font=\small] at (.59,.38) {$s=h=1/4$\\$\dimF\mu=1/4$};
\end{tikzpicture}
\end{minipage}\hfill
\begin{minipage}[t]{.47\textwidth}
\centering
\begin{tikzpicture}[x=4.5cm,y=3.4cm,>=stealth]
\fill[gray!12] (0,0) rectangle (1,1);
\fill[blue!13] (0,0) -- plot[domain=0:1,samples=65]
  (\x,{3*\x/(1+3*\x)}) -- (1,0) -- cycle;
\draw[->] (0,0)--(1.13,0) node[right] {$h$};
\draw[->] (0,0)--(0,1.14) node[above] {$\delta$};
\draw[densely dashed,gray] (0,1)--(1,1)--(1,0);
\draw[blue!70!black,thick,domain=0:1,samples=65]
  plot (\x,{3*\x/(1+3*\x)});
\draw (.6667,.015)--(.6667,-.015) node[below] {$1/4$};
\draw (1,.015)--(1,-.015) node[below] {$3/8$};
\draw (.015,1)--(-.015,1) node[left] {$1$};
\draw (.015,.3333)--(-.015,.3333) node[left] {$1/3$};
\fill[blue!70!black] (.6667,.3333) circle (1.5pt);
\node[font=\small] at (.46,.86) {$q=2$};
\node[font=\small] at (.53,.17) {$q=q_*$};
\node[font=\scriptsize,rotate=20,fill=white,inner sep=1pt]
  at (.64,.67) {$\delta=2h/(s+2h)$};
\end{tikzpicture}
\end{minipage}
\caption{Left: the exact $L^q$ dimensions in Example~\ref{ex:Lq-gain}, on the proved range $1<q\le2$. Right: the optimizer of the sufficient criterion \eqref{eq:closed-restriction-phase} for $s=1/4$, $0<h<3/8$. The point represents $(h,\delta)=(1/4,1/3)$. The two regions describe the proven estimates, not the unknown optimal restriction exponent.}
\label{fig:Lq-phase}
\end{figure}

There is also a density interpretation of the full-dimensional branch. If $h>0$, $\delta>0$, and $(q-1)(1-s)<qh$, then the approximating densities satisfy
\begin{equation*}
\|f_N\|_q^q=\prod_{n\le N}R_n^{q-1}S_n(q),
\qquad \sup_N\|f_N\|_q<\infty.
\end{equation*}
Indeed, by $((1-\alpha)+v)^q\le1+C_q(\alpha+v+v^q)$ in the relevant nonnegative range, or by splitting the two digit classes in \eqref{eq:weighted-q-exact},
\[
R_n^{q-1}S_n(q)\le1+C_q\bigl(\alpha_n+
\alpha_n^q(R_n/M_n)^{q-1}\bigr).
\]
Both error sequences are summable because $\delta>0$ forces exponential growth of $R_n$ and the second exponent is strictly negative. The $L^q$ martingale theorem gives an $L^q$ density. H\"older and Hausdorff--Young then give restriction when $p'\ge2q'$. In the last case of \eqref{eq:closed-restriction-phase}, letting $q\uparrow q_*$ recovers its strict restriction range. Thus this branch has a direct integrability explanation as well as an $L^q$-dimension proof. It is not an example of a singular measure with a nontrivial multifractal formalism.

A strict improvement over the Stein--Tomas range also occurs outside this
$L^q$-density regime, as the following singular continuous example shows.
\begin{example}\label{ex:singular-Lq-gain}
Choose Weil sets with $s=1/4$ and $\delta_n\to1/3$, as in Example~\ref{ex:Lq-gain}, but prescribe
\[
\alpha_n=\begin{cases}
1,&n=k^2\text{ for some }k\ge1,\\
R_n^{-1/4},&\text{otherwise}.
\end{cases}
\]
The resulting measure is singular continuous and
\begin{equation}\label{eq:singular-weighted-dimensions}
\dimF\mu=\frac1{12},\quad
D_q(\mu)=\frac1{12}+\frac23
\min\left\{1,\frac14+\frac{q}{4(q-1)}\right\}
\quad(1<q\le2).
\end{equation}
In particular $D_{3/2}=3/4$, $D_2=7/12$, and $D_\infty\le5/12$. Thus restriction holds for every $p<18/17$, whereas the ordinary Stein--Tomas criterion cannot give $p>30/29$.

Here are the details. At the exceptional levels the one-level Fourier exponent $a_n$ from Theorem~\ref{thm:weighted} tends to $1/4$, and at the other levels it tends to $3/4$. Hence $\dimF\mu=1/12$. For a fixed $q\in(1,2]$, put
\[
b(q)=\min\left\{1,\frac14+\frac{q}{4(q-1)}\right\},\quad
c_N(q)=\frac{-\sum_{n\le N}\log S_n(q)}{(q-1)T_N}.
\]
Up to an error tending to zero, the recurrence for this average is
\[
c_N(q)=(1-\delta_N)c_{N-1}(q)+\delta_N
\begin{cases}1/4,&N=k^2,\\ b(q),&\text{otherwise}.
\end{cases}
\]
The gaps between exceptional indices tend to infinity. On an ordinary run, the recurrence contracts the difference $|c_N(q)-b(q)|$ by a factor tending to $2/3$ at each step, up to an error tending to zero. Hence the averages just before the exceptional levels tend to $b(q)$, and just after them tend to $2b(q)/3+1/12$. Between exceptional levels they return towards $b(q)$, up to a vanishing error. Thus their lower limit has precisely the value in \eqref{eq:singular-weighted-dimensions}, by Theorem~\ref{thm:weighted-Lq}. Repeating the same recurrence for the largest cylinder mass gives
$D_\infty\le(2/3)(1/2)+(1/3)(1/4)=5/12$.

At each exceptional level all digits outside $B_n$ have zero mass. The support has level-$N$ coverings whose total length is at most
$
\prod_{k^2\le N}\frac{M_{k^2}}{R_{k^2}}\rightarrow0.
$
This proves singularity; the vanishing largest digit weights imply continuity. Finally, the $q=3/2$ threshold in \eqref{eq:Lq-restriction-threshold} is
$
2\cdot3+\frac{4(1-3/4)}{1/12}=18,$
whereas the classical threshold is at least
$2+4(1-5/12)/(1/12)=30$. This proves the stated strict improvement for a singular member of the weighted family. Spectrality is not asserted for these unequal weights.
\end{example}

\section*{Statements and Declarations}
\subsection*{Funding}
This work was supported by the National Natural Science Foundation of China
(Grant No. 12171107).

\subsection*{Data Availability}
This article is theoretical. No research datasets were generated or analysed.

\subsection*{Competing Interests}
The author declares that he has no financial or non-financial interests
relevant to the content of this article.

\subsection*{Use of generative AI}
During preparation of this manuscript, the author used ChatGPT (OpenAI) to assist with language revision. Responsibility  for the mathematical arguments, citations, and final text rests with the author.

\end{document}